\documentclass[a4paper,11pt,fleqn]{article}
\usepackage{pstricks}
\usepackage{amsmath}
\usepackage{amssymb}
\usepackage{mathrsfs} 
\usepackage{theorem} 
\usepackage{euscript}
\usepackage{exscale,relsize}
\usepackage{graphicx}
\usepackage{srcltx}
\usepackage{xcolor}
\usepackage{charter}
\usepackage{url}
\newcommand{\email}[1]{\href{mailto:#1}{\nolinkurl{#1}}}
\topmargin     -0.0cm
\oddsidemargin -0.0cm
\textwidth     16.6cm 
\headheight     0.0cm
\textheight    21.3cm
\parindent        6mm
\parskip          9pt
\tolerance 1000
\PassOptionsToPackage{normalem}{ulem}
\usepackage{ulem}

\renewcommand{\leq}{\ensuremath{\leqslant}}
\renewcommand{\geq}{\ensuremath{\geqslant}}
\newcommand{\minimize}[2]{\ensuremath{\underset{\substack{{#1}}}%
{\text{\rm minimize}}\;\;#2 }}

\newcommand{\pair}[2]{\langle{{#1},{#2}}\rangle} 
\newcommand{\Pair}[2]{\big\langle{{#1},{#2}}\big\rangle}

\newcommand{\menge}[2]{\big\{{#1}~\big |~{#2}\big\}} 
\newcommand{\Menge}[2]{\Big\{{#1}~\Big |~{#2}\Big\}} 

\newcommand{\IDD}{\ensuremath{\text{\rm int\:dom}f}}
\newcommand{\IDDB}{\ensuremath{\text{\rm int\:dom}\boldsymbol{f}}}

\newcommand{\RX}{\ensuremath{\left]-\infty,+\infty\right]}}

\newcommand{\XX}{\ensuremath{{\mathcal X}}}
\newcommand{\WC}{\ensuremath{{\mathfrak W}}}
\newcommand{\XXX}{\ensuremath{\boldsymbol{\mathcal X}}}

\newcommand{\YY}{\ensuremath{{\mathcal Y}}}
\newcommand{\Sum}{\ensuremath{\displaystyle\sum}}

\newcommand{\emp}{\ensuremath{{\varnothing}}}
\newcommand{\prox}{\ensuremath{\text{\rm prox}}}

\newcommand{\Id}{\ensuremath{\operatorname{Id}}\,}

\newcommand{\RR}{\ensuremath{\mathbb{R}}}

\newcommand{\RP}{\ensuremath{\left[0,+\infty\right[}}

\newcommand{\RPP}{\ensuremath{\left]0,+\infty\right[}}
\newcommand{\NN}{\ensuremath{\mathbb N}}

\newcommand{\weakly}{\ensuremath{\:\rightharpoonup\:}}
\newcommand{\exi}{\ensuremath{\exists\,}}
\newcommand{\pinf}{\ensuremath{{+\infty}}}

\newcommand{\dom}{\ensuremath{\text{\rm dom}\,}}

\newcommand{\Cart}{\ensuremath{\raisebox{-0.5mm}{\mbox{\huge{$\times$}}}}}

\newcommand{\sign}{\ensuremath{\text{\rm sign}}}
\newcommand{\inte}{\ensuremath{\text{\rm int}\,}}
\newcommand{\intdom}{\ensuremath{\text{\rm int\,dom}\,}}

\newcommand{\ran}{\ensuremath{\text{\rm ran}\,}}
\newcommand{\zer}{\ensuremath{\text{\rm zer}\,}}
\newcommand{\gra}{\ensuremath{\text{\rm gra}\,}}

\newcommand{\Fix}{\ensuremath{\text{\rm Fix}\,}}

\newtheorem{theorem}{Theorem}[section]
\newtheorem{lemma}[theorem]{Lemma}
\newtheorem{corollary}[theorem]{Corollary}
\newtheorem{proposition}[theorem]{Proposition}

\theoremstyle{plain}{\theorembodyfont{\rmfamily}%
}
\theoremstyle{plain}{\theorembodyfont{\rmfamily}%
\newtheorem{example}[theorem]{Example}}
\theoremstyle{plain}{\theorembodyfont{\rmfamily}%
\newtheorem{remark}[theorem]{Remark}}
\theoremstyle{plain}{\theorembodyfont{\rmfamily}%
}
\theoremstyle{plain}{\theorembodyfont{\rmfamily}%
\newtheorem{condition}[theorem]{Condition}}
\theoremstyle{plain}{\theorembodyfont{\rmfamily}%
\newtheorem{definition}[theorem]{Definition}}
\theoremstyle{plain}{\theorembodyfont{\rmfamily}
\newtheorem{problem}[theorem]{Problem}}
\theoremstyle{plain}{\theorembodyfont{\rmfamily}
}

\numberwithin{equation}{section}
\setlength{\itemsep}{1pt} 
\definecolor{labelkey}{rgb}{0,0.08,0.45}
\definecolor{refkey}{rgb}{0,0.6,0.0}
\definecolor{Brown}{rgb}{0.45,0.0,0.05}
\definecolor{dgreen}{rgb}{0.00,0.49,0.00}
\definecolor{dblue}{rgb}{0,0.08,0.75}
\RequirePackage[dvips,colorlinks,hyperindex]{hyperref}
\hypersetup{linktocpage=true,citecolor=dblue,linkcolor=dgreen}

\begin{document}

\title{\sffamily\LARGE Solving Composite Monotone Inclusions in
Reflexive Banach Spaces by Constructing Best Bregman Approximations
from Their Kuhn-Tucker Set}

\author{Patrick L. Combettes and Quang Van Nguyen\\[5mm]
\small
\small Sorbonne Universit\'es -- UPMC Univ. Paris 06\\
\small UMR 7598, Laboratoire Jacques-Louis Lions\\
\small F-75005 Paris, France\\
\small \email{plc@ljll.math.upmc.fr},\: 
\email{quangnv@ljll.math.upmc.fr}
}
\date{~}

\maketitle
\setcounter{page}{1}

\vskip 8mm

\centerline{\large\em In memory of Jean Jacques Moreau
(1923--2014)}

\begin{abstract}
We introduce the first operator splitting method for composite 
monotone inclusions outside of Hilbert spaces. The proposed 
primal-dual method constructs iteratively the best Bregman 
approximation to an arbitrary point from the Kuhn-Tucker set 
of a composite monotone inclusion. 
Strong convergence is established in reflexive
Banach spaces without requiring additional restrictions on the
monotone operators or knowledge of the norms of the linear
operators involved in the model. The monotone operators are
activated via Bregman distance-based resolvent operators.
The method is novel even in Euclidean spaces, where it provides 
an alternative to the usual proximal methods based on the 
standard distance. 
\end{abstract}

{\bfseries Key words.}
Best approximation,
Banach space,
Bregman distance,
duality,
Legendre function,
monotone operator,
operator splitting,
primal-dual algorithm.

\newpage
\section{Introduction}
\label{IVsec:I}

Let $\XX$ be a reflexive real Banach space with norm $\|\cdot\|$
and let $\pair{\cdot}{\cdot}$ be the duality pairing between
$\XX$ and its topological dual $\XX^*$. A set-valued operator 
$M\colon\XX\to 2^{\XX^*}$ with graph
$\gra M=\menge{(x,x^*)\in\XX\times\XX^*}{x^*\in Mx}$ is monotone if 
\begin{equation}
(\forall (x_1,x_1^*)\in\gra M)(\forall (x_2,x_2^*)\in\gra M)\quad
\pair{x_1-x_2}{x_1^*-x_2^*}\geq 0,
\end{equation}
and maximally monotone if, furthermore, there exists no monotone 
operator from $\XX$ to $2^{\XX^*}$ the graph of which properly 
contains $\gra M$. Monotone operator theory emerged in the early 
1960s as a well-structured branch of nonlinear analysis 
\cite{Kach60,Mint61,Mint62,Zara60}, and its remains very active 
\cite{Livre1,Borw10,Simo08,ZeidXX}.
One of the main reasons for the success of the theory is that a
significant range of problems in areas such as optimization, 
economics, variational inequalities, partial differential 
equations, mechanics, signal and image processing, optimal 
transportation, machine learning, and traffic theory can be 
reduced to solving inclusions of the type 
\begin{equation}
\label{e:hanoi-oct2011}
\text{find}\;\;x\in\XX\quad\text{such that}\quad 0\in Mx,
\end{equation}
where $M\colon\XX\to 2^{\XX^*}$ is maximally monotone.
Conceptually, this inclusion can be solved via the Bregman 
proximal point algorithm, special instances of which go back to 
\cite{Ecks93,Kass85,Tebo92}. 
To present its general form \cite{Sico03}, 
we need the following definitions, which revolve around the 
notion of a Bregman distance pioneered in \cite{Breg67}.

\begin{definition}{\rm\cite{Ccm01,Sico03}}
Let $\XX$ be a reflexive real Banach space and let 
$f\colon\XX\to\RX$ be a proper lower semicontinuous convex function,
with conjugate $f^*\colon\XX^*\to\RX\colon
x^*\mapsto\sup_{x\in\XX}(\pair{x}{x^*}-f(x))$ and 
Moreau subdifferential \cite{Mor63c}
\begin{equation}
\label{e:subdiff}
\partial f\colon\XX\to 2^{\XX^*}\colon x\mapsto
\menge{x^*\in\XX^*}{(\forall y\in\XX)\,
\pair{y-x}{x^*}+f(x)\leq f(y)}.  
\end{equation}
Then $f$ is a \emph{Legendre function} if it is 
\emph{essentially smooth} in the sense that $\partial f$ is 
both locally bounded and single-valued on its
domain, and \emph{essentially strictly convex} in the sense
that $\partial f^*$ is locally bounded on its domain and 
$f$ is strictly convex on every convex subset of $\dom\partial f$.
Moreover, $f$ is G\^ateaux differentiable on $\IDD\neq\emp$
and the associated \emph{Bregman distance} is 
\begin{equation}
\label{e:Bdist}
\begin{aligned}
D^f\colon\XX\times\XX&\to\,[0,\pinf]\\
(x,y)&\mapsto 
\begin{cases}
f(x)-f(y)-\pair{x-y}{\nabla f(y)},&\text{if}\;\;y\in\IDD;\\
\pinf,&\text{otherwise}.
\end{cases}
\end{aligned}
\end{equation}
Let $C$ be a closed convex subset of $\XX$ such that
$C\cap\IDD\neq\emp$. The \emph{Bregman projector} onto $C$ induced
by $f$ is 
\begin{equation}
\label{e:2001}
\begin{aligned}
P^f_C\colon\IDD&\to C\cap\IDD\\
y&\mapsto\underset{x\in C}{\text{argmin}}\,D^f(x,y).
\end{aligned}
\end{equation}
\end{definition}

The fact that, for every $y\in\IDD$, $P^f_Cy\in\IDD$ exists and is 
unique is established in \cite[Corollary~7.9]{Ccm01}.
It follows from \cite[Theorem~5.18]{Sico03} that, under suitable 
assumptions on $f$ and $M$, given a sequence $(\gamma_n)_{n\in\NN}$ 
in $\RPP$ such that $\inf_{n\in\NN}\gamma_n>0$, the sequence 
defined by
\begin{equation}
\label{epOlk28ha10}
x_0\in\IDD\quad\text{and}\quad(\forall n\in\NN)\quad
x_{n+1}=(\nabla f+\gamma_n M)^{-1}\circ\nabla f(x_n)
\end{equation}
converges weakly to a solution to \eqref{e:hanoi-oct2011} (in the
case when $\XX$ is a Hilbert space and $f=\|\cdot\|^2/2$, 
$(\nabla f+\gamma_n M)^{-1}\circ\nabla f$ reduces to the standard 
resolvent $J_{\gamma_n M}$ and we obtain
the classical result of \cite[Theorem~1]{Rock76}). A strongly
convergent variant of \eqref{epOlk28ha10} was proposed in 
\cite{Pams03}. In applications,
however, $M$ is typically too complex for \eqref{epOlk28ha10} to be
implementable. For instance, given a real Banach space $\YY$,
a typical composite model is 
$M=A+L^*BL$, where $A\colon\XX\to 2^{\XX^*}$ and 
$B\colon\YY\to 2^{\YY^*}$ are monotone, and $L\colon\XX\to\YY$ 
is linear and bounded. 
In Hilbert spaces, if $\XX=\YY$ and $L=\Id$, several 
well-established splitting methods are available to solve 
\eqref{e:hanoi-oct2011}, i.e., to find a zero of $A+B$ 
using $A$ and $B$ separately at each iteration
\cite{Livre1,Lion79,Merc79,Tsen00}. Splitting methods for the 
more versatile composite model $M=A+L^*BL$ in Hilbert spaces 
were first proposed in \cite{Siop11} (see 
\cite{Optl14,Bot13a,Bot13c,Siop13,Opti14,Bang13} for subsequent 
developments). These methods provide in general only weak 
convergence to an unspecified solution and, in addition, they 
require knowledge of
$\|L\|$ or potentially costly inversions of linear operators. 
The recent method primal-dual method of \cite{Nfao15} circumvents
these limitations and, in addition, converges to the best
approximation to a reference point from the Kuhn-Tucker set
relative to the underlying hilbertian distance. 
The objective of this paper is to extend it to
reflexive Banach spaces and to best approximation relative
to general Bregman distances. Let us stress that the theory of 
splitting algorithms in Banach spaces is rather scarce as most 
hilbertian splitting methods cannot be naturally extended to that
setting; in particular, to the best of our knowledge there exists 
at present no splitting algorithm for finding a zero of $M=A+L^*BL$ 
outside of Hilbert spaces. By contrast, the geometric primal-dual
construction of \cite{Nfao15}, which consists in projecting a 
reference point onto successive simple outer approximations to the 
Kuhn-Tucker set of the inclusion $0\in Ax+L^*BLx$, lends itself 
to such an extension. Our analysis will borrow tools on 
Legendre functions and Bregman-based algorithms from \cite{Ccm01} 
and \cite{Sico03}, as well as geometric constructs from 
\cite{Nfao15} and \cite{Pams03}. The proposed results will provide
not only the first splitting methods for composite inclusions 
outside of Hilbert spaces, but also new algorithms in Hilbert, 
and even Euclidean, spaces.

The problem under consideration is the following.

\begin{problem}
\label{prob:1}
Let $\XX$ and $\YY$ be reflexive real Banach spaces such that
$\XX\neq\{0\}$ and $\YY\neq\{0\}$, let $\XXX$ be the standard 
product vector space $\XX\times\YY^*$ equipped with the norm 
$(x,y^*)\mapsto\sqrt{\|x\|^2+\|y^*\|^2}$, and let $\XXX^*$ be its 
topological dual, that is, $\XX^*\times\YY$ equipped with 
the norm $(x^*,y)\mapsto\sqrt{\|x^*\|^2+\|y\|^2}$.
Let $A\colon\XX\to 2^{\XX^*}$ and 
$B\colon\YY\to 2^{\YY^*}$ be maximally monotone, and
let $L\colon\XX\to\YY$ be linear and bounded. Consider the
inclusion problem
\begin{equation}
\label{Ipb:Ia}
\text{find}\;\;x\in\XX\;\;\text{such that}\;\;0\in Ax+L^*BLx,
\end{equation}
the dual problem
\begin{equation}
\label{Ipb:Ib}
\text{find}\;\;y^*\in\YY^*\;\;\text{such that}\;\; 
0\in -LA^{-1}(-L^*y^*)+B^{-1}y^*,
\end{equation}
and let 
\begin{equation}
\label{IVe:VKT}
\boldsymbol{Z}
=\menge{(x,y^*)\in\boldsymbol{\mathcal{X}}}{-L^*y^*\in Ax
\quad\text{and}\quad Lx\in B^{-1}y^*}
\end{equation}
be the associated Kuhn-Tucker set.
Let $f\colon\XX\to\RX$ and $g\colon\YY\to\RX$ be Legendre 
functions, set
\begin{equation}
\label{epOlk28ha14}
\boldsymbol{f}\colon\XXX\to\RX\colon(x,y^*)\mapsto f(x)+g^*(y^*),
\end{equation}
let $x_0\in\intdom f$, let $y_0^*\in\intdom g^*$, 
and suppose that 
$\boldsymbol{Z}\cap\text{\rm int\,dom}\,\boldsymbol{f}\neq\emp$.
The problem is to find the best Bregman approximation 
$(\overline{x},\overline{y}^*)=P^{\boldsymbol{f}}_{\boldsymbol{Z}}
(x_0,y_0^*)$ to $(x_0,y_0^*)$ from $\boldsymbol{Z}$.
\end{problem}

{\bfseries Notation.} 
The symbols $\weakly$ and $\to$ denote respectively weak and 
strong convergence. The set of weak sequential cluster points 
of a sequence $(x_n)_{n\in\NN}$ is denoted by $\WC(x_n)_{n\in\NN}$.
The closed ball of center $x\in\XX$ and radius $\rho\in\RPP$ is
denoted by $B(x;\rho)$.
Let $M\colon\XX\to 2^{\XX^*}$ be a set-valued operator. The domain 
of $M$ is $\dom M=\menge{x\in\XX}{Mx\neq\emp}$, the range of 
$M$ is $\ran M=\menge{x^*\in\XX^*}{(\exi x\in\XX)\,x^*\in Mx}$, 
and the set of zeros of $M$ is $\zer M=\menge{x\in\XX}{0\in Mx}$. 
$\Gamma_0(\XX)$ is the class of all lower semicontinuous convex 
functions $f\colon\XX\to\RX$ such that 
$\dom f=\menge{x\in\XX}{f(x)<\pinf}\neq\emp$. 
Let $f\colon\XX\to\RX$. Then $f$ is coercive if 
$\lim_{\|x\|\to+\infty}f(x)=+\infty$ and supercoercive if 
$\lim_{\|x\|\to+\infty}f(x)/\|x\|=+\infty$.

\section{Preliminary results}
\label{IVsec:II}

\subsection{Properties of the Kuhn-Tucker set}
The following proposition revisits and complements some results 
of \cite{Siop14} and \cite{Siop11} on the properties of the 
Kuhn-Tucker set in the more general setting of 
Problem~\ref{prob:1}.

\begin{proposition}
\label{ppOlk28ha12}
Consider the setting of Problem~\ref{prob:1}. Then the 
following hold:
\begin{enumerate}
\item
\label{ppOlk28ha12i}
Let $\mathscr{P}$ be the set of solutions to \eqref{Ipb:Ia} and 
let $\mathscr{D}$ be the set of solutions to \eqref{Ipb:Ib}. Then 
the following hold:
\begin{enumerate}
\item
\label{ppOlk28ha12ia}
$\boldsymbol{Z}$ is a closed convex subset of 
$\mathscr{P}\times\mathscr{D}$.
\item
\label{ppOlk28ha12ib}
Set $\mathcal{Q}_\XX\colon\XXX\to\XX\colon (x,y^*)\mapsto x$
and $\mathcal{Q}_{\YY^*}\colon\XXX\to\YY^*\colon (x,y^*)\mapsto y^*$.
Then $\mathscr{P}=\mathcal{Q}_\XX(\boldsymbol{Z})$ and 
$\mathscr{D}=\mathcal{Q}_{\YY^*}(\boldsymbol{Z})$. 
\item
\label{ppOlk28ha12ic}
$\mathscr{P}\neq\emp\Leftrightarrow\boldsymbol{Z}
\neq\emp\Leftrightarrow\mathscr{D}\neq\emp$.
\end{enumerate}
\item
\label{ppOlk28ha12ii}
For every $\mathsf{a}=(a,a^*)\in\gra A$ and 
$\mathsf{b}=(b,b^*)\in\gra B$, set
$\boldsymbol{s}_{\mathsf{a},\mathsf{b}}^*=(a^*+L^*b^*,b-La)$, 
$\eta_{\mathsf{a},\mathsf{b}}=\pair{a}{a^*}+\pair{b}{b^*}$, and 
$\boldsymbol{H}_{\mathsf{a},\mathsf{b}}=
\menge{\boldsymbol{x}\in\boldsymbol{\mathcal{X}}}
{\pair{\boldsymbol{x}}{\boldsymbol{s}_{\mathsf{a},\mathsf{b}}^*}
\leq\eta_{\mathsf{a},\mathsf{b}}}$. Then the following hold:
\begin{enumerate}
\item
\label{ppOlk28ha12iia}
$(\forall\mathsf{a}\in\gra A)(\forall\mathsf{b}\in\gra B)$ 
$[\,\boldsymbol{s}^*_{\mathsf{a},\mathsf{b}}
=\boldsymbol{0}\;\Leftrightarrow\;
\boldsymbol{H}_{\mathsf{a},\mathsf{b}}
=\XXX\;\Rightarrow\;(a,b^*)\in\boldsymbol{Z}
\;\text{and}\;\eta_{\mathsf{a},\mathsf{b}}=0\,]$.
\item
\label{ppOlk28ha12iib}
$\boldsymbol{Z}=\bigcap_{\mathsf{a}\in\gra A}
\bigcap_{\mathsf{b}\in\gra B}
\boldsymbol{H}_{\mathsf{a},\mathsf{b}}$\,.
\end{enumerate}
\item
\label{ppOlk28ha12iii}
Let $(a_n,a_n^*)_{n\in\NN}$ be a sequence 
in $\gra A$, let $(b_n,b_n^*)_{n\in\NN}$ be a sequence in 
$\gra B$, and let $(x,y^*)\in\XXX$. 
Suppose that $a_n\weakly x$, $b_n^*\weakly y^*$, 
$a_n^*+L^*b_n^*\to 0$, and $La_n-b_n\to 0$. Then 
$(x,y^*)\in\boldsymbol{Z}$.
\end{enumerate}
\end{proposition}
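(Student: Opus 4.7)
My plan is to interpret the Kuhn--Tucker set as the zero set of a composite monotone operator on the product space $\XXX$. Introduce the product operator $\boldsymbol{M}_0\colon\XXX\to 2^{\XXX^*}\colon(x,y^*)\mapsto Ax\times B^{-1}y^*$, which is maximally monotone as a Cartesian product, and the bounded linear skew operator $\boldsymbol{S}\colon\XXX\to\XXX^*\colon(x,y^*)\mapsto(L^*y^*,-Lx)$. Then $(x,y^*)\in\boldsymbol{Z}$ precisely when $-\boldsymbol{S}(x,y^*)\in\boldsymbol{M}_0(x,y^*)$, and this viewpoint unifies the argument. Item (ii)(a) is a short calculation: $\boldsymbol{s}_{\mathsf{a},\mathsf{b}}^*=\boldsymbol{0}$ forces $a^*=-L^*b^*$ and $b=La$; substituting these into $\eta_{\mathsf{a},\mathsf{b}}$ telescopes it to $0$ and simultaneously places $(a,b^*)$ in $\boldsymbol{Z}$, while a closed affine halfspace equals the whole ambient space iff its normal direction vanishes and its offset is nonnegative, closing the chain of equivalences. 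For (i)(b), one direction is read off the definition of $\boldsymbol{Z}$; for the other, given $x\in\mathscr{P}$, the inclusion $0\in Ax+L^*BLx$ lets me select $y^*\in BLx$ with $-L^*y^*\in Ax$, so $(x,y^*)\in\boldsymbol{Z}$. The dual case is symmetric, and (i)(c) is then automatic.

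The main work is (ii)(b), which I expect to be the key obstacle. Expanding $\pair{(x,y^*)}{\boldsymbol{s}_{\mathsf{a},\mathsf{b}}^*}\leq\eta_{\mathsf{a},\mathsf{b}}$ and regrouping rewrites the relation $(x,y^*)\in\boldsymbol{H}_{\mathsf{a},\mathsf{b}}$ as the single scalar inequality
\[
\pair{x-a}{a^*+L^*y^*}\leq\pair{Lx-b}{y^*-b^*}.
\]
For $\boldsymbol{Z}\subseteq\bigcap_{\mathsf{a},\mathsf{b}}\boldsymbol{H}_{\mathsf{a},\mathsf{b}}$, I apply monotonicity of $A$ to the pairs $(x,-L^*y^*)$ and $(a,a^*)$ and of $B$ to the pairs $(Lx,y^*)$ and $(b,b^*)$: both sides of the displayed inequality are then, respectively, $\leq 0$ and $\geq 0$. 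The reverse inclusion is the delicate step. Taking the supremum over $\gra A$ and the infimum over $\gra B$ in the scalar inequality produces
\[
\sup_{(a,a^*)\in\gra A}\pair{x-a}{a^*+L^*y^*}\leq\inf_{(b,b^*)\in\gra B}\pair{Lx-b}{y^*-b^*}.
\]
Maximality of $A$ makes the left supremum nonnegative---strictly positive if $-L^*y^*\notin Ax$, zero otherwise via the witness $(x,-L^*y^*)\in\gra A$---and maximality of $B$ makes the right infimum nonpositive by the symmetric dichotomy at $(Lx,y^*)$. Squeezing then forces both extrema to vanish, and the maximal monotonicity characterizations yield $-L^*y^*\in Ax$ and $y^*\in BLx$, i.e., $(x,y^*)\in\boldsymbol{Z}$. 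Once (ii)(b) is in hand, (i)(a) drops out: $\boldsymbol{Z}$ is an intersection of closed affine halfspaces, hence closed and convex, while the defining inclusions put it inside $\mathscr{P}\times\mathscr{D}$.

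For (iii) I would lift the hypothesis to the product space. Set $\boldsymbol{c}_n=(a_n,b_n^*)$ and $\boldsymbol{c}_n^*=(a_n^*,b_n)$, so that $(\boldsymbol{c}_n,\boldsymbol{c}_n^*)\in\gra\boldsymbol{M}_0$ and the two strong convergences read precisely $\boldsymbol{c}_n^*+\boldsymbol{S}\boldsymbol{c}_n\to\boldsymbol{0}$ in $\XXX^*$. Weak continuity of $L$ and $L^*$ gives $\boldsymbol{S}\boldsymbol{c}_n\weakly(L^*y^*,-Lx)$, and subtracting the strong null sequence yields $\boldsymbol{c}_n^*\weakly(-L^*y^*,Lx)$. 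Skewness of $\boldsymbol{S}$ implies $\pair{\boldsymbol{c}_n}{\boldsymbol{S}\boldsymbol{c}_n}=0$, so the boundedness of $(\boldsymbol{c}_n)$ and $\boldsymbol{c}_n^*+\boldsymbol{S}\boldsymbol{c}_n\to\boldsymbol{0}$ force
\[
\pair{\boldsymbol{c}_n}{\boldsymbol{c}_n^*}=\pair{\boldsymbol{c}_n}{\boldsymbol{c}_n^*+\boldsymbol{S}\boldsymbol{c}_n}\longrightarrow 0=\pair{(x,y^*)}{(-L^*y^*,Lx)}.
\]
Invoking the sequential demiclosedness of the graph of the maximally monotone operator $\boldsymbol{M}_0$ on the reflexive Banach space $\XXX$ then places $((x,y^*),(-L^*y^*,Lx))$ in $\gra\boldsymbol{M}_0$, which unpacks to $-L^*y^*\in Ax$ and $Lx\in B^{-1}y^*$, i.e., $(x,y^*)\in\boldsymbol{Z}$, as required.
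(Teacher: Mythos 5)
Your proof is correct, and it shares the paper's skeleton --- the product operator $(x,y^*)\mapsto Ax\times B^{-1}y^*$ and the skew map $\boldsymbol{S}\colon(x,y^*)\mapsto(L^*y^*,-Lx)$, together with the same algebraic identity rewriting $(x,y^*)\in\boldsymbol{H}_{\mathsf{a},\mathsf{b}}$ as $\pair{x-a}{a^*+L^*y^*}\leq\pair{Lx-b}{y^*-b^*}$ --- but it closes the three key steps with different tools, and the differences are worth recording. For \ref{ppOlk28ha12iib}, the paper feeds the inequality $\pair{(a,b^*)-(x,y^*)}{(a^*,b)-(-L^*y^*,Lx)}\geq 0$ into the maximal monotonicity of the coupled operator $\boldsymbol{M}+\boldsymbol{S}$, which it obtains from Rockafellar's sum theorem in reflexive spaces; your sup/inf squeeze uses only the maximality of $A$ and of $B$ separately (the dichotomy ``either the candidate pair is in the graph and the extremum is zero, or it is not and the extremum is strictly signed''), so you never need the sum theorem at all --- a genuine economy, though you should note that the left supremum may be $\pinf$, which only strengthens the contradiction. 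For \ref{ppOlk28ha12ia}, the paper identifies $\boldsymbol{Z}=\zer(\boldsymbol{M}+\boldsymbol{S})$ and cites Browder's lemma that zero sets of maximally monotone operators are closed and convex, whereas you read closedness and convexity off the half-space representation from \ref{ppOlk28ha12iib}; this is cleaner but reverses the paper's logical order (you must prove \ref{ppOlk28ha12iib} before \ref{ppOlk28ha12ia}, which is harmless). For \ref{ppOlk28ha12iii}, the paper uses the sequential closedness of $\gra(\boldsymbol{M}+\boldsymbol{S})$ in the weak$\times$strong topology applied to $\boldsymbol{x}_n^*=(a_n^*+L^*b_n^*,b_n-La_n)\to\boldsymbol{0}$, while you stay with the uncoupled product operator and invoke the weak$\times$weak demiclosedness principle under the condition $\pair{\boldsymbol{c}_n}{\boldsymbol{c}_n^*}\to\pair{\boldsymbol{c}}{\boldsymbol{c}^*}$, which you verify via skewness of $\boldsymbol{S}$ and boundedness of the weakly convergent sequence; this is a standard and correct alternative, though the limsup-type lemma you cite is marginally heavier than the weak-strong graph closedness the paper needs. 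In all three places your argument is sound; the net effect is a proof that leans on the separate maximality of $A$ and $B$ rather than on the maximality of their coupled sum.
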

\begin{proof}
Set $\boldsymbol{M}\colon\boldsymbol{\mathcal{X}}\to
2^{\boldsymbol{\mathcal{X}}^*}\colon(x,y^*)\mapsto 
Ax\times B^{-1}y^*$ and
$\boldsymbol{S}\colon\boldsymbol{\mathcal{X}}
\to\boldsymbol{\mathcal{X}}^*\colon (x,y^*)\mapsto (L^*y^*,-Lx)$.
Since $A$ and $B^{-1}$ are maximally monotone, so is 
$\boldsymbol{M}$. On the other hand, $\boldsymbol{S}$ is linear, 
bounded, and positive since 
\begin{equation}
\label{egVxZ5y821}
(\forall(x,y^*)\in\XXX)\quad
\pair{\boldsymbol{S}(x,y^*)}{(x,y^*)}=
\pair{x}{L^*y^*}+\pair{-Lx}{y^*}=0. 
\end{equation}
Thus, it follows from \cite[Section~17]{Simo08} that 
$\boldsymbol{S}$ is maximally monotone with 
$\dom\boldsymbol{S}=\XXX$. In turn, we derive from 
\cite[Theorem~24.1(a)]{Simo08} that
\begin{equation}
\label{egVxZ5y820}
\boldsymbol{M}+\boldsymbol{S}\;\text{is maximally monotone.}
\end{equation}

\ref{ppOlk28ha12ia}:
Let $(x,y^*)\in\XXX$. Then
\begin{eqnarray}
\label{egVxZ5y828}
\boldsymbol{0}\in\boldsymbol{M}(x,y^*)+\boldsymbol{S}(x,y^*)
&\Leftrightarrow& 0\in Ax+L^*y^*\quad\text{and}\quad 0\in 
B^{-1}y^*-Lx\nonumber\\
&\Leftrightarrow& -L^*y^*\in Ax
\quad\text{and}\quad Lx\in B^{-1}y^*\nonumber\\
&\Leftrightarrow& (x,y^*)\in \boldsymbol{Z}.
\end{eqnarray}
Therefore, we derive from \eqref{egVxZ5y820} and
\cite[Lemma~1.1(a)]{IVBro65} that
$\boldsymbol{Z}=\zer(\boldsymbol{M}+\boldsymbol{S})=
(\boldsymbol{M}+\boldsymbol{S})^{-1}(0)$ is closed and convex.

\ref{ppOlk28ha12ib}: Let $x\in\XX$. Then
$x\in\mathscr{P}\Leftrightarrow
0\in Ax+L^*BLx\Leftrightarrow (\exi y^*\in\YY^*)\; 
[-L^*y^*\in Ax\;\text{and}\;y^*\in BLx]
\Leftrightarrow (\exi y^*\in\YY^*)\;(x,y^*)
\in\boldsymbol{Z}$. 
Hence $\mathscr{P}\neq\emp$
$\Leftrightarrow$ $\boldsymbol{Z}\neq\emp$.
Likewise, let $y^*\in\YY^*$. Then 
$y^*\in\mathscr{D}\Leftrightarrow
0\in-LA^{-1}(-L^*y^*)+B^{-1}y^*
\Leftrightarrow(\exi x\in\XX)\; 
[x\in A^{-1}(-L^*y^*)\;\text{and}\;0\in-Lx+B^{-1}y^*]
\Leftrightarrow(\exi x\in\XX)\; 
[-L^*y^*\in Ax\;\text{and}\; Lx\in B^{-1}y^*]
\Leftrightarrow(\exi x\in\XX)\;(x,y^*)\in\boldsymbol{Z}$.

\ref{ppOlk28ha12ic}: Clear from \ref{ppOlk28ha12ib} (see also
\cite[Corollary 2.4]{IVPe00}).

\ref{ppOlk28ha12iia} :
Let $\mathsf{a}\in\gra A$ and $\mathsf{b}\in\gra B$. Then
$\boldsymbol{s}^*_{\mathsf{a},\mathsf{b}}=\boldsymbol{0}$
$\Rightarrow$ 
[$-L^*b^*=a^*\in Aa$ and $La=b\in B^{-1}b^*$] 
$\Rightarrow$ $(a,b^*)\in\boldsymbol{Z}$. In 
addition, 
\begin{multline}
\boldsymbol{s}^*_{\mathsf{a},\mathsf{b}}=\boldsymbol{0}
\quad\Rightarrow\quad
\eta_{\mathsf{a},\mathsf{b}}=\pair{a}{a^*}
+\pair{b}{b^*}=\pair{a}{-L^*b^*}+\pair{La}{b^*}\\
=-\pair{La}{b^*}+\pair{La}{b^*}=0.
\end{multline}
Thus 
$\boldsymbol{s}^*_{\mathsf{a},\mathsf{b}}=\boldsymbol{0}$
$\Rightarrow$ $\boldsymbol{H}_{\mathsf{a},\mathsf{b}}=\XXX$. 
Conversely, 
$\boldsymbol{H}_{\mathsf{a},\mathsf{b}}=\XXX$ $\Rightarrow$ 
$\boldsymbol{s}^*_{\mathsf{a},\mathsf{b}}=\boldsymbol{0}$
$\Rightarrow$ $\eta_{\mathsf{a},\mathsf{b}}=0$.

\ref{ppOlk28ha12iib}: First, suppose that
$\boldsymbol{x}=(x,y^*)\in\bigcap_{\mathsf{a}\in\gra A}
\bigcap_{\mathsf{b}\in\gra B}
\boldsymbol{H}_{\mathsf{a},\mathsf{b}}$. Then 
\begin{align}
\label{epOlk28ha11}
&\hskip -2mm
(\forall\mathsf{a}\in\gra A)(\forall\mathsf{b}\in\gra B)\quad
\pair{(a,b^*)-(x,y^*)}{(a^*,b)-(-L^*y^*,Lx)}\nonumber\\
&\hskip 60mm =\pair{(a-x,b^*-y^*)}{(a^*+L^*y^*,b-Lx)}\nonumber\\
&\hskip 60mm =\pair{a-x}{a^*+L^*y^*}+\pair{b-Lx}{b^*-y^*}\nonumber\\
&\hskip 60mm =\eta_{\mathsf{a},\mathsf{b}}-\pair{\boldsymbol{x}}
{\boldsymbol{s}^*_{\mathsf{a},\mathsf{b}}}\nonumber\\
&\hskip 60mm \geq 0.
\end{align}
On the other hand, since 
\begin{equation}
\label{epOlk28ha18x}
\menge{\big((a,b^*),(a^*,b)\big)}{\mathsf{a}\in\gra A, 
\mathsf{b}\in\gra B}=\gra\boldsymbol{M}, 
\end{equation}
it follows from \eqref{egVxZ5y820} and \eqref{epOlk28ha11} that
$((x,y^*),(-L^*y^*,Lx))\in\gra\boldsymbol{M}$, i.e., 
$\boldsymbol{x}\in\boldsymbol{Z}$. Thus 
\begin{equation}
\label{epOlk28ha19}
\bigcap_{\mathsf{a}\in\gra A}\bigcap_{\mathsf{b}\in\gra B}
\boldsymbol{H}_{\mathsf{a},\mathsf{b}}\subset\boldsymbol{Z}.
\end{equation}
Conversely, let $\mathsf{a}\in\gra A$, let $\mathsf{b}\in\gra B$,
and let $(x,y^*)\in\boldsymbol{Z}$. 
Then $(x,-L^*y^*)\in\gra A$ and $(Lx,y^*)\in\gra B$. Since
$A$ and $B$ are monotone, we obtain
\begin{equation}
\pair{a-x}{a^*+L^*y^*}\geq 0
\quad\text{and}\quad
\pair{b-Lx}{b^*-y^*}\geq 0.
\end{equation}
Adding these two inequalities yields
\begin{equation}
\pair{x-a}{a^*+L^*y^*}+\pair{Lx-b}{b^*-y^*}\leq 0
\end{equation}
and, therefore,
\begin{align}
\label{epOlk28ha18}
\pair{\boldsymbol{x}}{\boldsymbol{s}^*_{\mathsf{a},\mathsf{b}}}
&=\pair{x}{a^*+L^*b^*}+\pair{b-La}{y^*}\nonumber\\
&=\pair{x}{a^*+L^*y^*}+\pair{Lx}{b^*-y^*}
+\pair{b-Lx}{y^*}+\pair{x-a}{L^*y^*}\nonumber\\
&=\pair{x-a}{a^*+L^*y^*}+\pair{a}{a^*}+\pair{La}{y^*}\nonumber\\
&\quad\;+\pair{Lx-b}{b^*-y^*}+\pair{b}{b^*}-\pair{b}{y^*}
+\pair{b-Lx}{y^*}+\pair{x-a}{L^*y^*}\nonumber\\
&\leq\pair{a}{a^*}+\pair{b}{b^*}+\pair{La-b}{y^*}
+\pair{b-Lx}{y^*}+\pair{x-a}{L^*y^*}\nonumber\\
&=\pair{a}{a^*}+\pair{b}{b^*}\nonumber\\
&=\eta_{\mathsf{a},\mathsf{b}}.
\end{align}
This implies that 
$(x,y^*)\in\boldsymbol{H}_{\mathsf{a},\mathsf{b}}$. Hence, 
$\boldsymbol{Z}\subset\boldsymbol{H}_{\mathsf{a},\mathsf{b}}$. 

\ref{ppOlk28ha12iii}: 
Set $(\forall n\in\NN)$ $\boldsymbol{x}_n=(a_n,b_n^*)$ and 
$\boldsymbol{x}_n^*=(a_n^*+L^*b_n^*,b_n-La_n)$. Then 
$\boldsymbol{x}_n\weakly (x,y^*)$, 
$\boldsymbol{x}_n^*\to \boldsymbol{0}$, and $(\forall n\in\NN)$ 
$(\boldsymbol{x}_n,\boldsymbol{x}_n^*)\in
\gra(\boldsymbol{M}+\boldsymbol{S})$. 
However, it follows from \eqref{egVxZ5y820} that 
$\gra(\boldsymbol{M}+\boldsymbol{S})$ is sequentially closed 
in $\XXX^{\text{weak}}\times\XXX^{*\text{strong}}$ 
\cite[Lemma~1.2]{IVBro65}. Hence, 
$\boldsymbol{0}\in(\boldsymbol{M}+\boldsymbol{S})(x,y^*)$, 
i.e., by \eqref{egVxZ5y828}, $(x,y^*)\in\boldsymbol{Z}$.
\end{proof}

\begin{proposition}
\label{pvb8uh4504}
Consider the setting of Problem~\ref{prob:1}. Then the 
following hold:
\begin{enumerate}
\item
\label{pvb8uh4504i}
$\boldsymbol{f}$ is a Legendre function.
\item
\label{pvb8uh4504ii}
The solution $(\overline{x},\overline{y}^*)$ to
Problem~\ref{prob:1} exists and is unique.
\end{enumerate}
\end{proposition}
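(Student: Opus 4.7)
The plan is to reduce both statements to known properties of Legendre functions, exploiting the product structure of $\XXX = \XX\times\YY^*$ and of $\boldsymbol{f}$.

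\textbf{Part \ref{pvb8uh4504i}.} First I would note that since $\XX$ and $\YY$ are reflexive and $g$ is Legendre on $\YY$, the duality of Legendre functions in reflexive Banach spaces (see \cite[Corollary~5.5]{Ccm01}) ensures that $g^{*}$ is Legendre on $\YY^{*}$, with $\nabla g^{*} = (\nabla g)^{-1}$ on $\intdom g^{*}$. Then $\boldsymbol{f}\in\Gamma_{0}(\XXX)$ by separability. Since subdifferentials decompose on product functions,
\begin{equation}
\partial\boldsymbol{f}\colon(x,y^{*})\mapsto\partial f(x)\times\partial g^{*}(y^{*}),\qquad
\partial\boldsymbol{f}^{*}\colon(x^{*},y)\mapsto\partial f^{*}(x^{*})\times\partial g(y),
\end{equation}
with domains $\dom\partial\boldsymbol{f}=\dom\partial f\times\dom\partial g^{*}$ and analogously for $\partial\boldsymbol{f}^{*}$. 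Local boundedness and single-valuedness of $\partial f$ and $\partial g^{*}$ transfer coordinatewise, hence $\partial\boldsymbol{f}$ is locally bounded and single-valued on its domain, establishing essential smoothness of $\boldsymbol{f}$. Essential strict convexity follows analogously: $\partial\boldsymbol{f}^{*}$ is locally bounded on its domain because $\partial f^{*}$ and $\partial g$ are, and strict convexity of $\boldsymbol{f}$ on any convex subset of $\dom\partial\boldsymbol{f}$ reduces, via the product form, to the corresponding strict convexity properties of $f$ and $g^{*}$ on the respective projections. Finally $\intdom\boldsymbol{f}=\intdom f\times\intdom g^{*}$ is nonempty by hypothesis, so $\boldsymbol{f}$ is a Legendre function in the sense of the definition given earlier.

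\textbf{Part \ref{pvb8uh4504ii}.} I would then combine part \ref{pvb8uh4504i} with Proposition~\ref{ppOlk28ha12}\ref{ppOlk28ha12i}\ref{ppOlk28ha12ia}, which tells us that $\boldsymbol{Z}$ is a closed convex subset of $\XXX$. By assumption, $\boldsymbol{Z}\cap\intdom\boldsymbol{f}\neq\emp$ and $(x_{0},y_{0}^{*})\in\intdom f\times\intdom g^{*}=\intdom\boldsymbol{f}$. Existence and uniqueness of the Bregman projector $P^{\boldsymbol{f}}_{\boldsymbol{Z}}(x_{0},y_{0}^{*})$ then follow directly from \cite[Corollary~7.9]{Ccm01}, which was invoked immediately after the definition of the Bregman projector.

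\textbf{Main obstacle.} The routine verifications are bookkeeping on the product space; the only point requiring genuine care is the passage from $g$ Legendre on $\YY$ to $g^{*}$ Legendre on $\YY^{*}$, which is where reflexivity of $\YY$ is essential and which must be cited from \cite{Ccm01}. Once this is in place, everything else is either a product-space decomposition or a direct application of the existence/uniqueness result for Bregman projections.
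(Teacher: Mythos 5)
Your proposal is correct and follows essentially the same route as the paper: both parts rest on the product decomposition $\partial\boldsymbol{f}=\partial f\times\partial g^{*}$ (and likewise for $\boldsymbol{f}^{*}$), on \cite[Corollary~5.5]{Ccm01} to pass from $g$ to $g^{*}$, and on \cite[Corollary~7.9]{Ccm01} together with Proposition~\ref{ppOlk28ha12}\ref{ppOlk28ha12ia} for existence and uniqueness of the projection. The only cosmetic difference is that the paper avoids your hand verification of local boundedness and strict convexity by invoking the characterization in \cite[Theorems~5.4 and~5.6]{Ccm01}, which in reflexive spaces reduces Legendreness to single-valuedness of $\partial\boldsymbol{f}$ and $\partial\boldsymbol{f}^{*}$ on their domains.
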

\begin{proof}
\ref{pvb8uh4504i}:
Since $f$ and $g$ are Legendre functions, so are $f^*$ and $g^*$  
\cite[Corollary~5.5]{Ccm01}. Therefore, it follows from 
\cite[Theorem~5.6(iii)]{Ccm01} that $\partial f$ and 
$\partial g^*$ are single-valued on $\dom\partial f=\intdom f$ 
and $\dom\partial g^*=\intdom g^*$, respectively. 
On the other hand, we derive from \eqref{epOlk28ha14} that 
$\dom\boldsymbol{f}=\dom f\times\dom g^*$ and that
\begin{equation}
\partial\boldsymbol{f}\colon\boldsymbol{\XX}\to 
2^{\boldsymbol{\XX^*}}\colon (x,y^*)\mapsto 
\partial f(x)\times\partial g^*(y^*).
\end{equation}
Thus, $\partial\boldsymbol{f}$ is single-valued on 
\begin{equation}
\label{ebcn5y731}
\dom\partial\boldsymbol{f}=\dom\partial f\times\dom\partial g^*
=\intdom f\times\intdom g^*=\inte(\dom f\times\dom g^*)
=\intdom\boldsymbol{f}.
\end{equation}
Likewise, since 
\begin{equation}
\boldsymbol{f}^*\colon\XX^*\to\RX\colon 
(x^*,y)\mapsto f^*(x^*)+g(y), 
\end{equation}
we deduce that $\partial\boldsymbol{f}^*$ is single-valued on 
$\dom\partial\boldsymbol{f}^*=\intdom\boldsymbol{f}^*$. 
Consequently, \cite[Theorems~5.4 and~5.6]{Ccm01} assert that 
$\boldsymbol{f}$ is a Legendre function.

\ref{pvb8uh4504ii}: It follows from 
Proposition~\ref{ppOlk28ha12}\ref{ppOlk28ha12ia} that 
$\boldsymbol{Z}$ is a closed convex subset of $\XXX$.
Hence, since
$\boldsymbol{Z}\cap\text{\rm int\,dom}\,\boldsymbol{f}\neq\emp$,
we derive from \ref{pvb8uh4504i} and 
\cite[Corollary~7.9]{Ccm01} that
$(\overline{x},\overline{y}^*)=P^{\boldsymbol{f}}_{\boldsymbol{Z}}
(x_0,y_0^*)$ is uniquely defined.
\end{proof}

\subsection{Best Bregman approximation algorithm}
The approach we present goes back to Haugazeau's algorithm 
\cite[Th\'eor\`eme~3-2]{Haug68} (see also
\cite[Theorem~29.3]{Livre1}) for projecting a point onto 
the intersection of closed convex sets in a Hilbert space using 
the projections onto the individual sets. The method was 
extended in \cite{Sico00} to minimize certain convex functions 
over the intersection of closed convex sets in Banach spaces. 
The adaptation to the problem of finding the best Bregman
approximation from a closed convex set was investigated in
\cite{Pams03}.

\begin{definition}
\label{d:B-haugazeau}
{\rm\cite[Definition~3.1]{Sico03} and \cite[Section~3]{Pams03}}
Let $\XXX$ be a reflexive real Banach space, let 
$\boldsymbol{f}\in\Gamma_0(\XXX)$ be a Legendre function, let 
$\boldsymbol{x}_0\in\intdom\boldsymbol{f}$, let 
$\boldsymbol{x}\in\intdom\boldsymbol{f}$, and let 
$\boldsymbol{y}\in\intdom\boldsymbol{f}$. Then
\begin{align}
\label{eKKnhf74g19a}
H^{\boldsymbol{f}}(\boldsymbol{x},\boldsymbol{y})
&=\menge{\boldsymbol{z}\in\boldsymbol{\XX}}{\pair{\boldsymbol{z}
-\boldsymbol{y}}{\nabla\boldsymbol{f}(\boldsymbol{x})
-\nabla\boldsymbol{f}(\boldsymbol{y})}\leq 0}\nonumber\\
&=\menge{\boldsymbol{z}\in\boldsymbol{\XX}}{
D^{\boldsymbol{f}}(\boldsymbol{z},\boldsymbol{y})+
D^{\boldsymbol{f}}(\boldsymbol{y},\boldsymbol{x})\leq
D^{\boldsymbol{f}}(\boldsymbol{z},\boldsymbol{x})}
\end{align}
is the closed affine half-space onto which $\boldsymbol{y}$ 
is the Bregman projection of $\boldsymbol{x}$ if
$\boldsymbol{x}\neq \boldsymbol{y}$. 
Moreover, if $H^{\boldsymbol{f}}(\boldsymbol{x}_0,\boldsymbol{x})
\cap H^{\boldsymbol{f}}(\boldsymbol{x},\boldsymbol{y})
\cap\intdom\boldsymbol{f}\neq\emp$, then
\begin{equation}
\label{e:Q}
Q^{\boldsymbol{f}}(\boldsymbol{x}_0,\boldsymbol{x},\boldsymbol{y})
=P_{H^{\boldsymbol{f}}(\boldsymbol{x}_0,\boldsymbol{x})\cap 
H^{\boldsymbol{f}}(\boldsymbol{x},\boldsymbol{y})}^{\boldsymbol{f}}
\boldsymbol{x}_0.
\end{equation}
\end{definition}

\begin{lemma} {\rm\cite[Lemma~3.2]{Sico03}}
\label{l:2002}
Let $\XXX$ be a reflexive real Banach space, and let 
$\boldsymbol{C}_1$ and $\boldsymbol{C}_2$ be convex subsets
of $\XXX$ such that $\boldsymbol{C}_1$ is closed and 
$\boldsymbol{C}_1\cap\inte\boldsymbol{C}_2\neq\emp$. Then 
$\overline{\boldsymbol{C}_1\cap\inte\boldsymbol{C}}_2=
\boldsymbol{C}_1\cap\overline{\boldsymbol{C}}_2$.
\end{lemma}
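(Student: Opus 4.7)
I would establish the two inclusions separately.

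The inclusion $\overline{\boldsymbol{C}_1\cap\inte\boldsymbol{C}_2}\subset\boldsymbol{C}_1\cap\overline{\boldsymbol{C}_2}$ is purely formal: any limit $\boldsymbol{x}$ of a sequence in $\boldsymbol{C}_1\cap\inte\boldsymbol{C}_2$ lies in $\boldsymbol{C}_1$ because $\boldsymbol{C}_1$ is closed, and in $\overline{\boldsymbol{C}_2}$ because $\inte\boldsymbol{C}_2\subset\boldsymbol{C}_2$.

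For the reverse inclusion $\boldsymbol{C}_1\cap\overline{\boldsymbol{C}_2}\subset\overline{\boldsymbol{C}_1\cap\inte\boldsymbol{C}_2}$, my plan is to invoke the classical \emph{line-segment principle} for convex sets: if $\boldsymbol{x}\in\overline{\boldsymbol{C}_2}$ and $\boldsymbol{y}\in\inte\boldsymbol{C}_2$, then $(1-t)\boldsymbol{x}+t\boldsymbol{y}\in\inte\boldsymbol{C}_2$ for every $t\in\left]0,1\right]$. Granting this, I would fix some $\boldsymbol{y}\in\boldsymbol{C}_1\cap\inte\boldsymbol{C}_2$ (which is nonempty by hypothesis), take $\boldsymbol{x}\in\boldsymbol{C}_1\cap\overline{\boldsymbol{C}_2}$, and set $\boldsymbol{x}_t=(1-t)\boldsymbol{x}+t\boldsymbol{y}$ for $t\in\left]0,1\right]$. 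Convexity of $\boldsymbol{C}_1$ gives $\boldsymbol{x}_t\in\boldsymbol{C}_1$, the line-segment principle gives $\boldsymbol{x}_t\in\inte\boldsymbol{C}_2$, and $\boldsymbol{x}_t\to\boldsymbol{x}$ as $t\downarrow 0$, so $\boldsymbol{x}\in\overline{\boldsymbol{C}_1\cap\inte\boldsymbol{C}_2}$.

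The only substantive step is the line-segment principle itself, which in a normed space I would establish by a short perturbation absorption. Choose $\rho>0$ with $B(\boldsymbol{y};\rho)\subset\boldsymbol{C}_2$ and fix $t\in\left]0,1\right[$. Given $\boldsymbol{w}\in B(\boldsymbol{x}_t;t\rho/2)$, use $\boldsymbol{x}\in\overline{\boldsymbol{C}_2}$ to pick $\boldsymbol{x}'\in\boldsymbol{C}_2$ with $\|\boldsymbol{x}'-\boldsymbol{x}\|<t\rho/(2(1-t))$, and set $\boldsymbol{y}'=(\boldsymbol{w}-(1-t)\boldsymbol{x}')/t$, so that $\boldsymbol{w}=(1-t)\boldsymbol{x}'+t\boldsymbol{y}'$ by construction. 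A direct estimate gives $\|\boldsymbol{y}'-\boldsymbol{y}\|\leq\|\boldsymbol{w}-\boldsymbol{x}_t\|/t+(1-t)\|\boldsymbol{x}-\boldsymbol{x}'\|/t<\rho$, so $\boldsymbol{y}'\in\boldsymbol{C}_2$, and convexity of $\boldsymbol{C}_2$ yields $\boldsymbol{w}\in\boldsymbol{C}_2$; thus $B(\boldsymbol{x}_t;t\rho/2)\subset\boldsymbol{C}_2$ and $\boldsymbol{x}_t\in\inte\boldsymbol{C}_2$. The case $t=1$ is immediate. This absorption argument is the single technical point; the rest reduces to formal manipulations of closures and convex combinations.
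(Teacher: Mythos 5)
Your proof is correct. The paper does not supply its own argument for this lemma---it simply cites \cite[Lemma~3.2]{Sico03}---and what you give is the standard proof of that cited result: the forward inclusion is formal from the closedness of $\boldsymbol{C}_1$ and $\inte\boldsymbol{C}_2\subset\boldsymbol{C}_2$, while the reverse inclusion follows from the line-segment (accessibility) principle applied along the segment from $\boldsymbol{x}\in\boldsymbol{C}_1\cap\overline{\boldsymbol{C}}_2$ to a fixed $\boldsymbol{y}\in\boldsymbol{C}_1\cap\inte\boldsymbol{C}_2$. Your absorption estimate $\|\boldsymbol{y}'-\boldsymbol{y}\|\leq\|\boldsymbol{w}-\boldsymbol{x}_t\|/t+(1-t)\|\boldsymbol{x}-\boldsymbol{x}'\|/t<\rho$ checks out, so the argument is complete and self-contained; note also that reflexivity plays no role---only the normed (indeed, topological vector space) structure is used.
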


\begin{proposition}
\label{p:2}
Let $\XXX$ be a reflexive real Banach space, let 
$\boldsymbol{f}\in\Gamma_0(\XXX)$ be a Legendre 
function, let $\boldsymbol{C}$ be a closed convex subset 
of $\overline{\dom}\boldsymbol{f}$ such that 
$\boldsymbol{C}\cap\intdom\boldsymbol{f}\neq\emp$,
let $\boldsymbol{x}_0\in\intdom\boldsymbol{f}$, and set 
$\overline{\boldsymbol{x}}=P_{\boldsymbol{C}}^{\boldsymbol{f}}
\boldsymbol{x}_0$. 
At every iteration $n\in\NN$, find 
$\boldsymbol{x}_{n+1/2}\in\inte\dom\boldsymbol{f}$ such that 
$\boldsymbol{C}\subset H^{\boldsymbol{f}}
(\boldsymbol{x}_n,\boldsymbol{x}_{n+1/2})$ and set
\begin{equation}
\label{e:haugazeau}
\boldsymbol{x}_{n+1}=Q^{\boldsymbol{f}}
\big(\boldsymbol{x}_0,\boldsymbol{x}_n,\boldsymbol{x}_{n+1/2}\big).
\end{equation}
Then the following hold:
\begin{enumerate}
\item
\label{p:2i-}
$(\forall n\in\NN)$ $\boldsymbol{C}\subset H^{\boldsymbol{f}}
(\boldsymbol{x}_0,\boldsymbol{x}_n)$.
\item
\label{p:2i}
$(\boldsymbol{x}_n)_{n\in\NN}$ is a well-defined bounded sequence 
in $\intdom\boldsymbol{f}$.
\item
\label{p:2ii+} 
Suppose that, for some $n\in\NN$, 
$\boldsymbol{x}_n\in\boldsymbol{C}$. Then $(\forall k\in\NN)$ 
$\boldsymbol{x}_{n+k}=\overline{\boldsymbol{x}}$.
\item
\label{p:2ii} 
$\sum_{n\in\NN}D^{\boldsymbol{f}}(\boldsymbol{x}_{n+1},
\boldsymbol{x}_n)<+\infty$.
\item
\label{p:2iii-} 
$(\forall n\in\NN)$ $D^{\boldsymbol{f}}
(\boldsymbol{x}_{n+1/2},\boldsymbol{x}_n)\leq 
D^{\boldsymbol{f}}(\boldsymbol{x}_{n+1},\boldsymbol{x}_n)$.
\item
\label{p:2iii} 
$\sum_{n\in\NN}D^{\boldsymbol{f}}(\boldsymbol{x}_{n+1/2},
\boldsymbol{x}_n)<+\infty$.
\item
\label{p:2iv} 
$[\,\boldsymbol{x}_n\weakly\overline{\boldsymbol{x}}$ and 
$\boldsymbol{f}(\boldsymbol{x}_n)\to\boldsymbol{f}
(\overline{\boldsymbol{x}})\,]$ 
$\Leftrightarrow$ $D^{\boldsymbol{f}}(\boldsymbol{x}_n,
\overline{\boldsymbol{x}})\to 0$ 
$\Leftrightarrow$
$\WC(\boldsymbol{x}_n)_{n\in\NN}\subset\boldsymbol{C}$. 
\end{enumerate}
\end{proposition}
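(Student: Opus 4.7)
The overall strategy is to adapt the classical Haugazeau analysis \cite[Th\'eor\`eme~3-2]{Haug68} to the Bregman setting by systematically exploiting the dual expression of $H^{\boldsymbol{f}}$ in \eqref{eKKnhf74g19a} as a Pythagorean inequality between Bregman distances. The plan is to first establish \ref{p:2i-} and the two halves of \ref{p:2i}, then to extract \ref{p:2ii+}--\ref{p:2iii} from Pythagorean telescoping, and finally to handle the convergence characterization \ref{p:2iv} via an Opial-type argument adapted to Bregman distances.

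For \ref{p:2i-}, I would argue by induction on $n$: the base case is trivial since $H^{\boldsymbol{f}}(\boldsymbol{x}_0,\boldsymbol{x}_0)=\XXX$, and for the inductive step the hypothesis $\boldsymbol{C}\subset H^{\boldsymbol{f}}(\boldsymbol{x}_0,\boldsymbol{x}_n)$ together with the algorithmic condition $\boldsymbol{C}\subset H^{\boldsymbol{f}}(\boldsymbol{x}_n,\boldsymbol{x}_{n+1/2})$ places $\boldsymbol{C}$ inside the intersection onto which $\boldsymbol{x}_{n+1}$ is Bregman-projected in \eqref{e:haugazeau}, so the support property of Bregman projections encoded in the first form of \eqref{eKKnhf74g19a} yields $\boldsymbol{C}\subset H^{\boldsymbol{f}}(\boldsymbol{x}_0,\boldsymbol{x}_{n+1})$. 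For \ref{p:2i}, this same inclusion combined with $\boldsymbol{C}\cap\intdom\boldsymbol{f}\neq\emp$ supplies a point in $H^{\boldsymbol{f}}(\boldsymbol{x}_0,\boldsymbol{x}_n)\cap H^{\boldsymbol{f}}(\boldsymbol{x}_n,\boldsymbol{x}_{n+1/2})\cap\intdom\boldsymbol{f}$, so $Q^{\boldsymbol{f}}$ is well defined by \cite[Corollary~7.9]{Ccm01}; boundedness follows from the second form of \eqref{eKKnhf74g19a} applied with $\boldsymbol{z}=\overline{\boldsymbol{x}}\in\boldsymbol{C}$, which gives $D^{\boldsymbol{f}}(\boldsymbol{x}_n,\boldsymbol{x}_0)\leq D^{\boldsymbol{f}}(\overline{\boldsymbol{x}},\boldsymbol{x}_0)$, combined with the bounded-sublevel-set property of $D^{\boldsymbol{f}}(\cdot,\boldsymbol{x}_0)$ for a Legendre $\boldsymbol{f}$ on a reflexive space.

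The stationarity claim \ref{p:2ii+} comes out by observing that $\boldsymbol{x}_n\in\boldsymbol{C}$ and the inequality $D^{\boldsymbol{f}}(\boldsymbol{x}_n,\boldsymbol{x}_0)\leq D^{\boldsymbol{f}}(\overline{\boldsymbol{x}},\boldsymbol{x}_0)$ force $\boldsymbol{x}_n=\overline{\boldsymbol{x}}$ by uniqueness of the Bregman projection onto $\boldsymbol{C}$, and the same Pythagorean estimate then identifies $\overline{\boldsymbol{x}}$ as the Bregman projection of $\boldsymbol{x}_0$ onto the defining intersection at every subsequent step, propagating $\boldsymbol{x}_{n+k}=\overline{\boldsymbol{x}}$. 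The summability items likewise reduce to the second form of \eqref{eKKnhf74g19a}: applied to $\boldsymbol{x}_{n+1}\in H^{\boldsymbol{f}}(\boldsymbol{x}_0,\boldsymbol{x}_n)$ it yields $D^{\boldsymbol{f}}(\boldsymbol{x}_{n+1},\boldsymbol{x}_n)\leq D^{\boldsymbol{f}}(\boldsymbol{x}_{n+1},\boldsymbol{x}_0)-D^{\boldsymbol{f}}(\boldsymbol{x}_n,\boldsymbol{x}_0)$, whose telescoped partial sums are bounded by $D^{\boldsymbol{f}}(\overline{\boldsymbol{x}},\boldsymbol{x}_0)$ thanks to \ref{p:2i}, giving \ref{p:2ii}; applied to $\boldsymbol{x}_{n+1}\in H^{\boldsymbol{f}}(\boldsymbol{x}_n,\boldsymbol{x}_{n+1/2})$ and discarding the nonnegative term $D^{\boldsymbol{f}}(\boldsymbol{x}_{n+1},\boldsymbol{x}_{n+1/2})$ it produces \ref{p:2iii-}, and combining the two yields \ref{p:2iii}.

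For \ref{p:2iv}, the equivalence of weak convergence plus $\boldsymbol{f}$-value convergence with $D^{\boldsymbol{f}}(\boldsymbol{x}_n,\overline{\boldsymbol{x}})\to 0$ is a standard property of Bregman distances induced by a Legendre function \cite{Ccm01,Sico03}, and each of these conditions trivially forces $\WC(\boldsymbol{x}_n)_{n\in\NN}\subset\boldsymbol{C}$ since $\overline{\boldsymbol{x}}\in\boldsymbol{C}$. The substantive direction is the converse: the Pythagorean inequality makes $(D^{\boldsymbol{f}}(\boldsymbol{x}_n,\boldsymbol{x}_0))_{n\in\NN}$ nondecreasing and bounded above by $D^{\boldsymbol{f}}(\overline{\boldsymbol{x}},\boldsymbol{x}_0)$, so it converges to some $\ell$; for any weak cluster point $\boldsymbol{x}\in\boldsymbol{C}$, weak lower semicontinuity of $\boldsymbol{f}$ yields $D^{\boldsymbol{f}}(\boldsymbol{x},\boldsymbol{x}_0)\leq\ell\leq D^{\boldsymbol{f}}(\overline{\boldsymbol{x}},\boldsymbol{x}_0)$, and the uniqueness in Problem~\ref{prob:1} forces $\boldsymbol{x}=\overline{\boldsymbol{x}}$; boundedness from \ref{p:2i} and reflexivity upgrade this to $\boldsymbol{x}_n\weakly\overline{\boldsymbol{x}}$, while $\ell=D^{\boldsymbol{f}}(\overline{\boldsymbol{x}},\boldsymbol{x}_0)$ together with the affine dependence of $D^{\boldsymbol{f}}(\boldsymbol{x}_n,\boldsymbol{x}_0)$ on $\boldsymbol{x}_n$ delivers $\boldsymbol{f}(\boldsymbol{x}_n)\to\boldsymbol{f}(\overline{\boldsymbol{x}})$. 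The principal obstacle throughout is the boundedness step of \ref{p:2i}, which requires a nontrivial coercivity-type property of $D^{\boldsymbol{f}}(\cdot,\boldsymbol{x}_0)$ specific to Legendre functions on reflexive Banach spaces and subtler than its hilbertian counterpart.
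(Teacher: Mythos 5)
Your argument is correct, but it follows a genuinely different route from the paper's. The paper does not reprove any of these facts directly: it sets $T_n=P^{\boldsymbol{f}}_{H^{\boldsymbol{f}}(\boldsymbol{x}_n,\boldsymbol{x}_{n+1/2})}$, checks that $T_n\boldsymbol{x}_n=\boldsymbol{x}_{n+1/2}$, that $\Fix T_n=H^{\boldsymbol{f}}(\boldsymbol{x}_n,\boldsymbol{x}_{n+1/2})\cap\intdom\boldsymbol{f}\supset\boldsymbol{C}\cap\intdom\boldsymbol{f}\neq\emp$, and, via Lemma~\ref{l:2002}, that $\boldsymbol{C}\subset\overline{\Fix}T_n$; this verifies Condition~3.2 of \cite{Pams03}, and all of \ref{p:2i-}--\ref{p:2iv} are then quoted from Propositions~2.2, 3.3, and~3.4 of that reference. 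You instead rederive the substance of those propositions from the Pythagorean form of $H^{\boldsymbol{f}}$ in \eqref{eKKnhf74g19a}, in the spirit of Haugazeau's original proof: the inductive support property for \ref{p:2i-}, monotonicity and boundedness of $(D^{\boldsymbol{f}}(\boldsymbol{x}_n,\boldsymbol{x}_0))_{n\in\NN}$ for \ref{p:2i} and \ref{p:2ii+}, telescoping for \ref{p:2ii}--\ref{p:2iii}, and identification of weak cluster points for \ref{p:2iv}; each step checks out, and the approach is self-contained modulo standard facts on Legendre functions. What the paper's route buys is brevity and the automatic handling of two points you pass over quickly: first, the variational characterization of the Bregman projection onto $H^{\boldsymbol{f}}(\boldsymbol{x}_0,\boldsymbol{x}_n)\cap H^{\boldsymbol{f}}(\boldsymbol{x}_n,\boldsymbol{x}_{n+1/2})$ yields the inclusion into $H^{\boldsymbol{f}}(\boldsymbol{x}_0,\boldsymbol{x}_{n+1})$ only for points of $\dom\boldsymbol{f}$, so extending it to all of $\boldsymbol{C}\subset\overline{\dom}\boldsymbol{f}$ requires the density argument of Lemma~\ref{l:2002}; second, both the boundedness step in \ref{p:2i} and the implication $D^{\boldsymbol{f}}(\boldsymbol{x}_n,\overline{\boldsymbol{x}})\to 0\Rightarrow\boldsymbol{x}_n\weakly\overline{\boldsymbol{x}}$ rest on the coercivity of $D^{\boldsymbol{f}}(\cdot,\boldsymbol{y})$ for Legendre $\boldsymbol{f}$ on a reflexive space, which is \cite[Lemma~7.3(v)]{Ccm01} --- you correctly flag this as the crux. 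One small slip: in \ref{p:2iv} you invoke ``the uniqueness in Problem~\ref{prob:1}'' to identify a weak cluster point with $\overline{\boldsymbol{x}}$, but the relevant fact in this abstract setting is the uniqueness of the Bregman projection $P^{\boldsymbol{f}}_{\boldsymbol{C}}\boldsymbol{x}_0$ guaranteed by \cite[Corollary~7.9]{Ccm01}, which is what your inequality $D^{\boldsymbol{f}}(\boldsymbol{x},\boldsymbol{x}_0)\leq D^{\boldsymbol{f}}(\overline{\boldsymbol{x}},\boldsymbol{x}_0)$ with $\boldsymbol{x}\in\boldsymbol{C}$ actually exploits.
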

\begin{proof}
Item~\ref{p:2i-} is found in 
\cite[Proof of Proposition~3.3]{Pams03}.
The first equivalence in \ref{p:2iv} follows from
\cite[Propositions~2.2(ii)]{Pams03}. To establish the remaining
assertions, set
\begin{equation}
\label{epOlk28ha21a}
(\forall n\in\NN)\quad T_n=
P_{H^{\boldsymbol{f}}
(\boldsymbol{x}_n,\boldsymbol{x}_{n+1/2})}^{\boldsymbol{f}}.
\end{equation}
Then, for every $n\in\NN$, Definition~\ref{d:B-haugazeau}
yields
$T_n\boldsymbol{x}_n=\boldsymbol{x}_{n+1/2}$, 
\cite[Proposition~3.32(ii)(b)]{Sico03} yields
$\Fix T_n=H^{\boldsymbol{f}}(\boldsymbol{x}_n,\boldsymbol{x}_{n+1/2})
\cap\IDDB$, and we derive from Lemma~\ref{l:2002} that
\begin{equation}
\label{epOlk28ha21b}
\boldsymbol{C}\subset H^{\boldsymbol{f}}
(\boldsymbol{x}_n,\boldsymbol{x}_{n+1/2})
\cap\overline{\dom}\boldsymbol{f}=
\overline{H^{\boldsymbol{f}}
(\boldsymbol{x}_n,\boldsymbol{x}_{n+1/2})\cap\IDDB}
=\overline{\Fix}T_n.
\end{equation}
On the other hand,
\begin{equation}
\label{epOlk28ha21c1}
(\forall n\in\NN)\quad\emp\neq \boldsymbol{C}\cap\intdom
\boldsymbol{f}\subset
H^{\boldsymbol{f}}(\boldsymbol{x}_n,\boldsymbol{x}_{n+1/2})
\cap\intdom\boldsymbol{f}=\Fix T_n
\end{equation}
and, therefore, $\bigcap_{n\in\NN}\Fix T_n\neq\emp$.
Altogether, \cite[Condition~3.2]{Pams03} is satisfied
and it follows from \cite[Propositions~3.3 and 3.4]{Pams03} 
and \cite[Proof of Proposition~3.4(vii)]{Pams03} 
that the proof is complete.
\end{proof}

\subsection{Coercivity and boundedness of monotone operators}

\begin{definition}
\label{d:coer-bou}
Let $\XX$ be a reflexive real Banach space such that 
$\XX\neq\{0\}$ and let $M\colon\XX\to 2^{\XX^*}$. 
Then $M$ is \emph{coercive} if 
\begin{equation}
\label{epOlk28ha21c}
(\exi z\in\dom M)\quad
\lim_{\|x\|\to+\infty}\inf\dfrac{\pair{x-z}{Mx}}{\|x\|}=\pinf,
\end{equation}
and it is \emph{bounded} if it maps bounded sets to bounded set. 
\end{definition}

\begin{lemma}
\label{l:20}
Let $\XX$ be a reflexive real Banach space such that 
$\XX\neq\{0\}$ and let $M\colon\XX\to 2^{\XX^*}$.
Suppose that one of the following holds:
\begin{enumerate}
\item
\label{l:20i}
$\dom M$ is nonempty and bounded.
\item
\label{l:20ii}
$M$ is uniformly monotone at some point $z\in\dom M$ with a
supercoercive modulus: there exists a strictly increasing function 
$\phi\colon\left[0,+\infty\right[\to\left[0,+\infty\right]$ that 
vanishes only at $0$ such that 
$\lim\limits_{t\to+\infty}\phi(t)/t=+\infty$ and
\begin{equation}
(\forall (x,x^*)\in\gra M)(\forall z^*\in Mz)\quad 
\pair{x-z}{x^*-z^*}\geq\phi(\|x-z\|).
\end{equation}
\item
\label{l:20iii}
$M=\partial\varphi$, where $\varphi$ is a supercoercive function 
in $\Gamma_0(\XX)$.
\end{enumerate}
Then $M$ is coercive. 
\end{lemma}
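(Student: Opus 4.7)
I would handle the three hypotheses separately, since each requires a distinct but short argument, and all reduce to producing a lower bound on $\inf\pair{x-z}{Mx}/\|x\|$ that tends to $+\infty$ as $\|x\|\to+\infty$.

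For \ref{l:20i}, the argument is essentially vacuous: pick any $z\in\dom M$. Since $\dom M$ is bounded, there exists $\rho\in\RPP$ such that $\dom M\subset B(0;\rho)$. Hence for every $x\in\XX$ with $\|x\|>\rho$ one has $Mx=\emp$, so with the standard convention $\inf\emp=\pinf$ the quantity $\inf\pair{x-z}{Mx}/\|x\|$ equals $\pinf$, which trivially yields \eqref{epOlk28ha21c}.

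For \ref{l:20ii}, I would fix the point $z\in\dom M$ at which uniform monotonicity holds and pick any $z^*\in Mz$. For an arbitrary $(x,x^*)\in\gra M$ the plan is to split
\begin{equation}
\pair{x-z}{x^*}=\pair{x-z}{x^*-z^*}+\pair{x-z}{z^*}\geq\phi(\|x-z\|)-\|x-z\|\,\|z^*\|,
\end{equation}
whence after dividing by $\|x\|$ and taking the infimum over $x^*\in Mx$,
\begin{equation}
\frac{\inf\pair{x-z}{Mx}}{\|x\|}\geq\frac{\phi(\|x-z\|)}{\|x-z\|}\cdot\frac{\|x-z\|}{\|x\|}-\frac{\|x-z\|}{\|x\|}\,\|z^*\|.
\end{equation}
By the triangle inequality $\|x-z\|/\|x\|\to 1$ as $\|x\|\to\pinf$, so $\|x-z\|\to\pinf$. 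The supercoercivity condition $\phi(t)/t\to\pinf$ then drives the first term to $\pinf$, while the second term converges to $\|z^*\|$. This yields \eqref{epOlk28ha21c}.

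For \ref{l:20iii}, since $\varphi\in\Gamma_0(\XX)$ is supercoercive it is in particular coercive, hence (by reflexivity) attains its infimum, so $\dom\partial\varphi\neq\emp$; I would pick any $z\in\dom\partial\varphi$. For every $(x,x^*)\in\gra\partial\varphi$ the Moreau subgradient inequality \eqref{e:subdiff} at the pair $(x,x^*)$, evaluated at $y=z$, rearranges to $\pair{x-z}{x^*}\geq\varphi(x)-\varphi(z)$. Dividing by $\|x\|$ and taking the infimum over $x^*\in Mx=\partial\varphi(x)$ gives
\begin{equation}
\frac{\inf\pair{x-z}{Mx}}{\|x\|}\geq\frac{\varphi(x)}{\|x\|}-\frac{\varphi(z)}{\|x\|},
\end{equation}
and supercoercivity of $\varphi$ makes the right-hand side tend to $\pinf$, completing the proof.

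None of the three steps is a serious obstacle; the only delicate point to watch is the conventional handling of $\inf$ on the empty set in \ref{l:20i} and the fact that in \ref{l:20iii} one must invoke reflexivity (rather than just $\XX\neq\{0\}$) to guarantee $\dom\partial\varphi\neq\emp$, which is needed in order that the point $z$ of Definition~\ref{d:coer-bou} actually exists.
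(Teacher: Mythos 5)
Your proof is correct and follows essentially the same route as the paper's: the $\inf\emp=\pinf$ convention for \ref{l:20i}, the split $\pair{x-z}{x^*}=\pair{x-z}{x^*-z^*}+\pair{x-z}{z^*}$ followed by the factorization $\frac{\|x-z\|}{\|x\|}\bigl(\frac{\phi(\|x-z\|)}{\|x-z\|}-\|z^*\|\bigr)$ for \ref{l:20ii}, and the subgradient inequality $\pair{x-z}{x^*}\geq\varphi(x)-\varphi(z)$ for \ref{l:20iii}. Your extra remark justifying $\dom\partial\varphi\neq\emp$ via reflexivity and coercivity is a welcome refinement of a point the paper leaves implicit, but it does not change the argument.
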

\begin{proof}
\ref{l:20i}:
Let $x\in\XX$ and let $z\in\dom M$. Then, if $\|x\|$ is
sufficiently large, we have $Mx=\emp$ and therefore 
$\inf{\pair{x-z}{Mx}}/{\|x\|}=\pinf$.

\ref{l:20ii}: We have
\begin{equation}
(\forall (x,x^*)\in\gra M)(\forall z^*\in Mz)\quad 
\pair{x-z}{x^*-z^*}\geq\phi(\|x-z\|).
\end{equation}
Hence, for every $x\in\dom M$ such that $\|x\|>\|z\|$, we have
\begin{align}
(\forall x^*\in Mx)(\forall z^*\in Mz)\quad
\dfrac{\pair{x-z}{x^*}}{\|x\|}
&\geq\dfrac{\phi(\|x-z\|)-\|x-z\|\,\|z^*\|}{\|x\|}\nonumber\\
&=\dfrac{\|x-z\|}{\|x\|}\bigg(
\dfrac{\phi(\|x-z\|)}{\|x-z\|}-\|z^*\|\bigg).
\end{align}
Thus, 
\begin{equation}
\lim_{\|x\|\to+\infty}\inf\dfrac{\pair{x-z}{Mx}}{\|x\|}=\pinf.
\end{equation}

\ref{l:20iii}: In view of \ref{l:20i}, we suppose that
$\dom M$ is unbounded. Let $z\in\dom M$. Then we derive from 
\eqref{e:subdiff} that, for every 
$x\in\dom M\smallsetminus\{0\}$,
\begin{equation}
\label{eQA8h34-05}
(\forall x^*\in Mx)\quad\dfrac{\varphi(x)-\varphi(z)}{\|x\|}\leq
\dfrac{\pair{x-z}{x^*}}{\|x\|}.
\end{equation}
Hence, the supercoercivity of $\varphi$ yields
\begin{equation}
\label{eQA8h34-06}
\lim_{\|x\|\to+\infty}\inf\dfrac{\pair{x-z}{Mx}}{\|x\|}=\pinf
\end{equation}
and $M$ is therefore coercive.
\end{proof}

\begin{lemma}
\label{IVlhHg7yG02}
Let $\XX$ be a reflexive real Banach space such that 
$\XX\neq\{0\}$, let $M_1\colon\XX\to 2^{\XX^*}$, and let 
$M_2\colon\XX\to 2^{\XX^*}$ be monotone. Suppose that there 
exists $z\in\dom M_1\cap\dom M_2$ such that
\begin{equation}
\label{e:M1}
\lim\limits_{\|x\|\to+\infty}\inf\dfrac{\pair{x-z}{M_1x}}{\|x\|}
=\pinf.
\end{equation} 
Then $M_1+M_2$ is coercive.
\end{lemma}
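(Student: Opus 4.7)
The plan is to bound the infimum appearing in the coercivity condition for $M_1+M_2$ from below by a sum of two terms: one controlled by the coercivity hypothesis \eqref{e:M1} on $M_1$, and one controlled by monotonicity of $M_2$ anchored at a fixed element of $M_2z$.

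Concretely, I would fix $z^*\in M_2z$, which is available since $z\in\dom M_2$. For every $x\in\dom(M_1+M_2)=\dom M_1\cap\dom M_2$ and every $x^*\in(M_1+M_2)x$, write $x^*=x_1^*+x_2^*$ with $x_1^*\in M_1x$ and $x_2^*\in M_2x$. Monotonicity of $M_2$ gives $\pair{x-z}{x_2^*-z^*}\geq 0$, hence
\begin{equation*}
\pair{x-z}{x^*}\;=\;\pair{x-z}{x_1^*}+\pair{x-z}{x_2^*}\;\geq\;\pair{x-z}{x_1^*}+\pair{x-z}{z^*}.
\end{equation*}
Taking infima over $x_1^*\in M_1x$ and $x_2^*\in M_2x$ separately on the right-hand side (which is legitimate since these are decoupled), dividing by $\|x\|$, and then taking infimum over $x^*\in(M_1+M_2)x$ on the left yields
\begin{equation*}
\inf\dfrac{\pair{x-z}{(M_1+M_2)x}}{\|x\|}\;\geq\;\inf\dfrac{\pair{x-z}{M_1x}}{\|x\|}+\dfrac{\pair{x-z}{z^*}}{\|x\|}.
\end{equation*}

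To conclude, I would let $\|x\|\to+\infty$. By \eqref{e:M1}, the first term on the right tends to $+\infty$. By Cauchy--Schwarz (the duality pairing bound), the second term is bounded below by $-\|x-z\|\,\|z^*\|/\|x\|$, which stays bounded since $\|x-z\|/\|x\|\to 1$. Therefore the left-hand side tends to $+\infty$, which is precisely Definition~\ref{d:coer-bou} applied to $M_1+M_2$ at the point $z$, establishing coercivity.

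No step here should present a serious obstacle: the only subtlety worth highlighting in the write-up is that one must fix a single $z^*\in M_2z$ before splitting the infimum, so that the $M_2$-component of the pairing is uniformly controlled, and that we may restrict attention to $x$ with $\dom(M_1+M_2)\ni x$, since otherwise $(M_1+M_2)x=\emp$ and the infimum is vacuously $+\infty$.
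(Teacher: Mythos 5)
Your proposal is correct and follows essentially the same route as the paper's proof: split $x^*\in(M_1+M_2)x$ as $x_1^*+x_2^*$, use monotonicity of $M_2$ against a fixed $z^*\in M_2z$ to reduce to the $M_1$ term plus a remainder bounded by $-\|x-z\|\,\|z^*\|/\|x\|$, and invoke \eqref{e:M1}. The remarks about fixing $z^*$ before passing to infima and about the empty-domain convention are sound and match the paper's (slightly more terse) argument.
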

\begin{proof}
Suppose that $x\in(\dom M_1\cap\dom M_2)\smallsetminus\{0\}$, let 
$x^*\in(M_1+M_2)x$, and let $z^*\in(M_1+M_2)z$. 
Then there exist $x_1^*\in M_1x$, 
$x_2^*\in M_2x$, $z_1^*\in M_1z$, and $z_2^*\in M_2z$ such that
$x^*=x_1^*+x_2^*$ and $z^*=z_1^*+z_2^*$. In turn, the monotonicity
of $M_2$ yields
\begin{align}
\dfrac{\pair{x-z}{x^*}}{\|x\|}
&=\dfrac{\pair{x-z}{x_1^*-z_1^*}}{\|x\|}
+\dfrac{\pair{x-z}{x_2^*-z_2^*}}{\|x\|}
+\dfrac{\pair{x-z}{z^*}}{\|x\|}\nonumber\\
&\geq\dfrac{\pair{x-z}{x_1^*}}{\|x\|}
+\dfrac{\pair{x-z}{z_2^*}}{\|x\|}\nonumber\\
&\geq\dfrac{\pair{x-z}{x_1^*}-\|x-z\|\,\|z_2^*\|}{\|x\|}
\end{align}
and \eqref{e:M1} implies that $M_1+M_2$ is coercive.
\end{proof}

\begin{lemma}
\label{IVlKKnhf74g12}
Let $\XX$ be a reflexive real Banach space such that 
$\XX\neq\{0\}$, let $M_1\colon\XX\to 2^{\XX^*}$, let 
$M_2\colon\XX\to 2^{\XX^*}$ be monotone, let $(x_n^*)_{n\in\NN}$ 
be a bounded sequence in $\XX^*$, and let $(\gamma_n)_{n\in\NN}$ 
be a bounded sequence in $\RPP$. Suppose that there exists 
$z\in\dom M_1\cap\dom M_2$ such that
\begin{equation}
\label{eKKnhf74g13b}
\lim_{\|x\|\to+\infty}\inf\dfrac{\pair{x-z}{M_1x}}{\|x\|}=\pinf,
\end{equation} 
and that
\begin{equation}
\label{eKKnhf74g12a}
(\forall n\in\NN)\quad x_n\in(M_1+\gamma_n M_2)^{-1}x_n^*.
\end{equation}
Then $(x_n)_{n\in\NN}$ is bounded.
\end{lemma}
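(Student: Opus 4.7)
My plan is to argue by contradiction. Suppose that $(x_n)_{n\in\NN}$ is unbounded. Then, after passing to a subsequence, we may assume $\|x_n\|\to\pinf$ and, in particular, that $x_n\in\dom M_1\cap\dom M_2$ for every $n$ and that $\|x_n\|>\|z\|$ for $n$ large. The inclusion \eqref{eKKnhf74g12a} provides a decomposition
\begin{equation*}
x_n^*=u_n^*+\gamma_n v_n^*,\quad\text{with}\quad u_n^*\in M_1x_n\;\;\text{and}\;\;v_n^*\in M_2x_n.
\end{equation*}
The strategy is to use the monotonicity of $M_2$ to extract a controllable lower bound for the $M_2$-component, and then to isolate the $M_1$-component so as to contradict \eqref{eKKnhf74g13b}.

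First I would fix some $z^*\in M_2z$, which exists since $z\in\dom M_2$. The monotonicity of $M_2$ at $(x_n,v_n^*)$ and $(z,z^*)$ yields $\pair{x_n-z}{v_n^*-z^*}\geq 0$, hence
\begin{equation*}
\pair{x_n-z}{v_n^*}\geq\pair{x_n-z}{z^*}.
\end{equation*}
Plugging this into the identity $\pair{x_n-z}{u_n^*}=\pair{x_n-z}{x_n^*}-\gamma_n\pair{x_n-z}{v_n^*}$ gives
\begin{equation*}
\pair{x_n-z}{u_n^*}\leq\pair{x_n-z}{x_n^*}-\gamma_n\pair{x_n-z}{z^*}\leq\|x_n-z\|\bigl(\|x_n^*\|+\gamma_n\|z^*\|\bigr).
\end{equation*}

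Dividing by $\|x_n\|$, and observing that $\|x_n-z\|/\|x_n\|\to 1$ while $(\|x_n^*\|)_{n\in\NN}$ and $(\gamma_n)_{n\in\NN}$ are bounded by hypothesis, I would conclude that the sequence $(\pair{x_n-z}{u_n^*}/\|x_n\|)_{n\in\NN}$ is bounded above. On the other hand, since $u_n^*\in M_1x_n$ for every $n$ and $\|x_n\|\to\pinf$, the coercivity assumption \eqref{eKKnhf74g13b} forces
\begin{equation*}
\frac{\pair{x_n-z}{u_n^*}}{\|x_n\|}\to\pinf,
\end{equation*}
which is the desired contradiction. There is no real obstacle here: the only conceptual point is to peel off the $\gamma_n M_2$-part via monotonicity so that the coercivity hypothesis on $M_1$ alone can be activated, and the remainder is bookkeeping with the Cauchy--Schwarz-type bound $|\pair{y}{y^*}|\leq\|y\|\,\|y^*\|$.
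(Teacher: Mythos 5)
Your argument is correct and follows essentially the same route as the paper: decompose $x_n^*=u_n^*+\gamma_n v_n^*$, use the monotonicity of $M_2$ at $z$ to discard the $\gamma_nM_2$-component, and let the coercivity of $M_1$ collide with the upper bound coming from the boundedness of $(x_n^*)_{n\in\NN}$ and $(\gamma_n)_{n\in\NN}$. The paper's proof is the same computation arranged as a single chain of inequalities, so there is nothing to add.
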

\begin{proof}
Set $\beta=\sup_{n\in\NN}\|x_n^*\|$ and 
$\sigma=\sup_{n\in\NN}\gamma_n$. It follows from
\eqref{eKKnhf74g12a} that, for every $n\in\NN$, there exist
$a_n^*\in M_1x_n$ and $b_n^*\in M_2x_n$ such that
$x_n^*=a_n^*+\gamma_n b_n^*$. If $(x_n)_{n\in\NN}$ is  
unbounded, there exists a strictly increasing sequence 
$(k_n)_{n\in\NN}$ in $\NN$ such that $0<\|x_{k_n}\|\uparrow\pinf$.
Therefore, \eqref{eKKnhf74g13b} yields
\begin{equation}
\label{eKKnhf74g13a}
\lim_{n\to\pinf}
\dfrac{\pair{x_{k_n}-z}{a^*_{k_n}}}{\|x_{k_n}\|}=\pinf.
\end{equation} 
Now let $z^*\in M_2 z$. By monotonicity of $M_2$,
$(\forall n\in\NN)$ $\pair{x_{k_n}-z}{b_{k_n}^*-z^*}\geq 0$.
Hence, \eqref{eKKnhf74g13a} implies that
\begin{align}
\label{eKKnhf74g12b}
\beta\bigg(1+\dfrac{\|z\|}{\|x_{k_0}\|}\bigg)
&\geq\beta\bigg(1+\dfrac{\|z\|}{\|x_{k_n}\|}\bigg)\nonumber\\
&\geq\beta\dfrac{\|x_{k_n}-z\|}{\|x_{k_n}\|}\nonumber\\
&\geq\dfrac{\pair{x_{k_n}-z}{x_{k_n}^*}}{\|x_{k_n}\|}\nonumber\\
&=\dfrac{\pair{x_{k_n}-z}{a_{k_n}^*}}{\|x_{k_n}\|}
+\gamma_{k_n}\dfrac{\pair{x_{k_n}-z}{b_{k_n}^*-z^*}}{\|x_{k_n}\|}
+\gamma_{k_n}\dfrac{\pair{x_{k_n}-z}{z^*}}{\|x_{k_n}\|}\nonumber\\
&\geq\dfrac{\pair{x_{k_n}-z}{a_{k_n}^*}}{\|x_{k_n}\|}
-\sigma\dfrac{\|x_{k_n}-z\|\,\|z^*\|}{\|x_{k_n}\|}\nonumber\\
&\geq\dfrac{\pair{x_{k_n}-z}{a_{k_n}^*}}{\|x_{k_n}\|}
-\sigma\|z^*\|\bigg(1+\dfrac{\|z\|}{\|x_{k_0}\|}\bigg)\nonumber\\
&\to\pinf,
\end{align}
and we reach a contradiction.
\end{proof}

\begin{corollary}
\label{IVcKKnhf74g12}
Let $\XX$ be a reflexive real Banach space such that 
$\XX\neq\{0\}$ and let $M\colon\XX\to 2^{\XX^*}$ be coercive.
Then $M^{-1}$ is bounded.
\end{corollary}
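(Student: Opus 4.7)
The plan is to prove the contrapositive via contradiction. Assume $M^{-1}$ is not bounded, so there is a bounded sequence $(x_n^*)_{n\in\NN}$ in $\XX^*$ and a sequence $(x_n)_{n\in\NN}$ in $\XX$ with $x_n^*\in Mx_n$ (equivalently, $x_n\in M^{-1}x_n^*$) such that $\|x_n\|\to\pinf$. Let $z\in\dom M$ be the point supplied by the coercivity condition \eqref{epOlk28ha21c} and set $\beta=\sup_{n\in\NN}\|x_n^*\|<\pinf$.

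First, I would invoke coercivity along $(x_n)_{n\in\NN}$: since $\|x_n\|\to\pinf$ and $x_n^*\in Mx_n$,
\begin{equation*}
\dfrac{\pair{x_n-z}{x_n^*}}{\|x_n\|}\;\geq\;\inf\dfrac{\pair{x_n-z}{Mx_n}}{\|x_n\|}\;\longrightarrow\;\pinf.
\end{equation*}
On the other hand, the duality estimate and the triangle inequality give the trivial upper bound
\begin{equation*}
\dfrac{\pair{x_n-z}{x_n^*}}{\|x_n\|}\;\leq\;\dfrac{\|x_n-z\|\,\|x_n^*\|}{\|x_n\|}\;\leq\;\beta\bigg(1+\dfrac{\|z\|}{\|x_n\|}\bigg)\;\longrightarrow\;\beta,
\end{equation*}
which contradicts the preceding divergence. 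Hence every such $(x_n)_{n\in\NN}$ is bounded, and therefore $M^{-1}$ maps every bounded subset of $\XX^*$ into a bounded subset of $\XX$.

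Alternatively, the statement is an immediate specialization of Lemma~\ref{IVlKKnhf74g12}: take $M_1=M$ (coercive, so \eqref{eKKnhf74g13b} holds at the witness $z$), $M_2\equiv 0$ (monotone with $\dom M_2=\XX$, so the point $z$ lies in $\dom M_1\cap\dom M_2$), $\gamma_n\equiv 1$, and any bounded sequence $(x_n^*)_{n\in\NN}$ in $\XX^*$; then $(M_1+\gamma_n M_2)^{-1}x_n^*=M^{-1}x_n^*$ and the lemma yields boundedness of $(x_n)_{n\in\NN}$.

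There is no genuine obstacle here; the only substantive observation is that the numerator $\pair{x_n-z}{x_n^*}$ admits the linear-in-$\|x_n\|$ upper bound $(\|x_n\|+\|z\|)\beta$, so coercivity of $M$ is incompatible with the existence of preimages of a bounded set that escape to infinity in norm.
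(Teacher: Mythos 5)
Your proposal is correct and matches the paper, whose entire proof is the one-line specialization you give in your second paragraph: take $M_1=M$ and $M_2=0$ in Lemma~\ref{IVlKKnhf74g12}. Your direct argument in the first paragraph is just an unwinding of that lemma's proof in this special case (modulo the routine extraction of a subsequence with $\|x_n\|\to\pinf$), so there is nothing genuinely different here.
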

\begin{proof}
Take $M_1=M$ and $M_2=0$ in Lemma~\ref{IVlKKnhf74g12}.
\end{proof}

\begin{proposition}
\label{IVpp:20a}
Let $\XX$ be a reflexive real Banach space such that 
$\XX\neq\{0\}$, let $h\in\Gamma_0(\XX)$ be essentially smooth, 
and let $M\colon\XX\to 2^{\XX^*}$ be such that 
$\dom M\cap\intdom h\neq\emp$. Suppose that 
one of the following holds:
\begin{enumerate}
\item
\label{IVcl:20b1}
$\dom M\cap\intdom h$ is bounded.
\item
\label{IVcl:20b2}
There exists $z\in\dom M\cap\intdom h$ such that
\begin{equation}
\lim_{\|x\|\to+\infty}\inf\dfrac{\pair{x-z}{Mx}}{\|x\|}
=\pinf.
\end{equation} 
\item
\label{IVcl:20b3}
$M$ is uniformly monotone at a point 
$z\in\dom M\cap\intdom h$ with a
supercoercive modulus.
\item
\label{IVcl:20b4}
$M$ is monotone and $h$ is supercoercive.
\item
\label{IVcl:20b5}
$M$ is monotone and $h$ is uniformly convex at a point 
$z\in\dom M\cap\intdom h$, i.e., there exists an increasing 
function 
$\phi\colon\left[0,+\infty\right[\to\left[0,+\infty\right]$ 
that vanishes only at $0$ such that 
\begin{equation}
(\forall y\in\dom h)(\forall\alpha\in\left]0,1\right[)\;\;
h(\alpha y+(1-\alpha)z)+\alpha(1-\alpha)\phi(\|y-z\|)
\leq\alpha h(y)+(1-\alpha)h(z).
\end{equation}
\end{enumerate}
Then $\nabla h+M$ is coercive. If, in addition, $M$ is 
maximally monotone, then $\dom(\nabla h+M)^{-1}=\XX^*$.
\end{proposition}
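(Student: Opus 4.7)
My plan is to establish coercivity of $\nabla h+M$ under each of \ref{IVcl:20b1}--\ref{IVcl:20b5} by invoking Lemmas \ref{l:20} and \ref{IVlhHg7yG02}, and then to derive surjectivity (i.e.\ $\dom(\nabla h+M)^{-1}=\XX^*$) from the maximal monotonicity of the sum combined with its coercivity.

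Since $h$ is essentially smooth, standard Legendre-function theory gives $\dom\nabla h=\dom\partial h=\intdom h$, and $\nabla h$ is the single-valued monotone selection of $\partial h$. Fix $z\in\dom M\cap\intdom h$. For \ref{IVcl:20b1}, $\dom(\nabla h+M)=\intdom h\cap\dom M$ is bounded, so $(\nabla h+M)x=\emp$ whenever $\|x\|$ is large enough and the coercivity ratio is vacuously $+\infty$. For \ref{IVcl:20b2}, Lemma \ref{IVlhHg7yG02} applied with $M_1=M$ and $M_2=\nabla h$ gives the conclusion at once. For \ref{IVcl:20b3}, Lemma \ref{l:20}\ref{l:20ii} yields \eqref{e:M1} with $M_1=M$, and Lemma \ref{IVlhHg7yG02} closes the case. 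For \ref{IVcl:20b4}, the supercoercivity of $h$ combined with Lemma \ref{l:20}\ref{l:20iii} provides \eqref{e:M1} with $M_1=\nabla h$, and again Lemma \ref{IVlhHg7yG02} concludes.

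Case \ref{IVcl:20b5} is where the real work lies. Letting $\alpha\uparrow 1$ in the uniform convexity inequality and using $\partial h(z)=\{\nabla h(z)\}$, one obtains $\pair{x-z}{\nabla h(x)-\nabla h(z)}\geq\phi(\|x-z\|)$ for every $x\in\intdom h$, which says that $\nabla h$ is uniformly monotone at $z$ with modulus $\phi$. The obstruction is that $\phi$ is only assumed to be increasing and to vanish only at $0$, so \emph{a priori} $\phi(t)/t$ need not tend to $+\infty$ and Lemma \ref{l:20}\ref{l:20ii} does not apply verbatim. My plan is to close this gap by exploiting uniform convexity further to upgrade $\phi$: iterating the midpoint inequality $h(y)\geq 2h((y+z)/2)-h(z)+\tfrac12\phi(\|y-z\|)$ together with the affine lower bound enjoyed by any $h\in\Gamma_0(\XX)$ forces $h(y)/\|y\|\to+\infty$, after which case \ref{IVcl:20b5} reduces to \ref{IVcl:20b4}. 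I expect this bootstrap from a non-supercoercive modulus to supercoercivity of $h$ itself to be the main technical obstacle.

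For the final assertion, assume $M$ is maximally monotone. The subdifferential $\partial h$ is maximally monotone with $\inte\dom\partial h=\intdom h$, so $\dom M\cap\intdom h\neq\emp$ serves as the Rockafellar constraint qualification in reflexive spaces, whence $\partial h+M$ is maximally monotone; as $\partial h=\nabla h$ on $\intdom h=\dom\partial h$, this sum coincides with $\nabla h+M$. Finally, a coercive maximally monotone operator on a reflexive Banach space is surjective (classical Browder--Rockafellar result), so $\ran(\nabla h+M)=\XX^*$, i.e.\ $\dom(\nabla h+M)^{-1}=\XX^*$.
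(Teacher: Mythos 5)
Your handling of cases \ref{IVcl:20b1}--\ref{IVcl:20b4} and of the final surjectivity claim matches the paper's proof in substance: the same interplay of Lemma~\ref{l:20} and Lemma~\ref{IVlhHg7yG02} (with the roles of $M_1$ and $M_2$ occasionally swapped relative to the paper, which is immaterial), maximal monotonicity of $\nabla h+M$ under the interiority qualification, and surjectivity of coercive maximally monotone operators on reflexive spaces.

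The gap is case \ref{IVcl:20b5}, which you rightly single out as the delicate one but do not actually prove. The bootstrap you propose --- iterating the midpoint inequality $h(y)\geq 2h((y+z)/2)-h(z)+\tfrac12\phi(\|y-z\|)$ together with an affine minorant to force $h(y)/\|y\|\to\pinf$ --- cannot deliver supercoercivity from a modulus that is merely increasing and vanishing only at $0$. Iterating inward toward $z$ produces terms $\phi(2^{-k}\|y-z\|)$ that shrink, and iterating outward along the ray $w_k=z+2^k(y-z)$ gives, after subtracting the tangent of $h$ at $z$, the recursion $D^h(w_{k+1},z)\geq 2\,D^h(w_k,z)+\tfrac12\phi\big(2^{k+1}\|y-z\|\big)$; unrolling and dividing by $\|w_k-z\|=2^k\|y-z\|$ yields a lower bound of the form $\frac{D^h(y,z)}{\|y-z\|}+\frac{1}{2\|y-z\|}\sum_{j\geq 1}2^{-j}\phi\big(2^j\|y-z\|\big)$, which remains \emph{bounded} whenever $\phi$ is bounded --- and nothing in the hypothesis prevents $\phi$ from being bounded. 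So this route produces at best linear growth of $h$ along rays from $z$, not supercoercivity, and the ``reduction to \ref{IVcl:20b4}'' is left unsubstantiated. The paper closes this case differently: it invokes Z\u{a}linescu's result that uniform convexity of $h$ at $z$ forces $\nabla h$ to be uniformly monotone at $z$ with a \emph{supercoercive} modulus, after which Lemma~\ref{l:20}\ref{l:20ii} applies directly (the reduction to case \ref{IVcl:20b4} is mentioned there only as an alternative, again resting on that citation). You need either to import that modulus-upgrade result or to prove it; the elementary iteration you sketch will not produce it.
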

\begin{proof} 
We first observe that \cite[Theorem~18.7]{Simo08} and 
\cite[Theorem~5.6]{Ccm01} imply that $\nabla h$ is maximally 
monotone and that $\dom\nabla h=\intdom h$.

\ref{IVcl:20b1}: Lemma~\ref{l:20}\ref{l:20i}.

\ref{IVcl:20b2}: 
It follows from Lemma~\ref{IVlhHg7yG02} that
$\nabla h+M$ is coercive. 

\ref{IVcl:20b3}: Since $\nabla h+M$ is uniformly monotone at $z$ 
with a supercoercive modulus, the claim follows from 
Lemma~\ref{l:20}\ref{l:20ii}.

\ref{IVcl:20b4}: 
Let $z\in\dom M\cap\intdom h=\dom M\cap\dom\partial h$. 
Then we derive from 
\eqref{eQA8h34-06} that 
\begin{equation}
\lim_{\|x\|\to+\infty}\dfrac{\pair{x-z}{\nabla h(x)}}{\|x\|}=\pinf.
\end{equation}
Thus, $\nabla h$ satisfies \eqref{e:M1} and it follows from 
Lemma~\ref{IVlhHg7yG02} that $\nabla h+M$ is coercive. 

\ref{IVcl:20b5}: It follows from 
\cite[Definition~2.2 and Remark~2.8]{IVZali83} that $\nabla h$ is
uniformly monotone at $z$ with a supercoercive modulus. Hence, 
$\nabla h+M$ is likewise and Lemma~\ref{l:20}\ref{l:20ii} implies 
that $\nabla h+M$ is coercive. Alternatively, this is a special
case of \ref{IVcl:20b4}.

Finally, suppose that $M$ is maximally monotone. Then 
\cite[Theorem~24.1(a)]{Simo08} asserts that $\nabla h+M$ is 
maximally monotone. Consequently, since $\nabla h+M$ is 
coercive, it follows from 
\cite[Corollary~II-B.32.35]{ZeidXX} that
$\dom(\nabla h+M)^{-1}=\ran(\nabla h+M)=\XX^*$.
\end{proof}

\begin{lemma}
\label{IVlKKnhf74g10}
Let $\XX$ and $\YY$ be real Banach spaces, let $D\subset\XX$
be a nonempty open set, and let $C$ be a nonempty bounded convex 
subset of $D$. Suppose that $T\colon D\to\YY$ is uniformly 
continuous on $C$ in the sense that 
\begin{equation}
\label{eKKnhf74g15t}
(\forall\varepsilon\in\RPP)(\exi\delta\in\RPP)(\forall x\in C)
(\forall y\in C)\quad
\|x-y\|\leq\delta\;\Rightarrow\;
\|Tx-Ty\|\leq\varepsilon.
\end{equation}
Then $T$ is bounded on $C$.
\end{lemma}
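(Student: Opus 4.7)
The plan is to produce a uniform bound on $\|Ty-Tx_0\|$ for $y$ ranging over $C$, by chaining together many short steps each controlled by the modulus of uniform continuity. The only ingredients needed are: uniform continuity at one scale $\varepsilon=1$, the finite diameter of $C$, and the convexity of $C$ (to ensure that straight-line interpolation between two points of $C$ stays in $C$ and hence in the domain of validity of the uniform continuity hypothesis).

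Concretely, I would first apply \eqref{eKKnhf74g15t} with $\varepsilon=1$ to obtain some $\delta\in\RPP$ such that, for all $x,y\in C$,
\begin{equation*}
\|x-y\|\leq\delta\quad\Rightarrow\quad\|Tx-Ty\|\leq 1.
\end{equation*}
Since $C$ is bounded, there exists $\rho\in\RPP$ such that $C\subset B(0;\rho)$, so that $\mathrm{diam}\,C\leq 2\rho$. Fix once and for all $x_0\in C$ and set $N=\lceil 2\rho/\delta\rceil$.

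Next, for an arbitrary $y\in C$, I would introduce the chain of points
\begin{equation*}
(\forall k\in\{0,1,\ldots,N\})\quad y_k=x_0+\tfrac{k}{N}(y-x_0).
\end{equation*}
Convexity of $C$ guarantees $y_k\in C\subset D$ for every $k$, and by construction
\begin{equation*}
\|y_k-y_{k-1}\|=\tfrac{1}{N}\|y-x_0\|\leq\tfrac{2\rho}{N}\leq\delta,
\end{equation*}
so the choice of $\delta$ yields $\|Ty_k-Ty_{k-1}\|\leq 1$ for each $k\in\{1,\ldots,N\}$. A telescoping application of the triangle inequality then gives
\begin{equation*}
\|Ty-Tx_0\|\leq\sum_{k=1}^{N}\|Ty_k-Ty_{k-1}\|\leq N,
\end{equation*}
whence $\|Ty\|\leq\|Tx_0\|+N$. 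Since the right-hand side is independent of $y\in C$, the set $T(C)$ is bounded in $\YY$, which is the conclusion.

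There is no real obstacle here: the argument is the standard one showing that a uniformly continuous map on a bounded convex set is bounded, and it uses only the convexity-based interpolation to keep the chain inside the set on which uniform continuity is assumed. The one subtlety worth noting explicitly is that openness of $D$ is not needed for this proof; convexity of $C$ alone suffices to keep the interpolating points inside $C\subset D$, so \eqref{eKKnhf74g15t} remains applicable at every link of the chain.
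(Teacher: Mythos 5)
Your proof is correct and follows essentially the same route as the paper's: apply the uniform continuity hypothesis at scale $\varepsilon=1$, use convexity of $C$ to interpolate between an arbitrary point and a fixed base point by a chain of steps of length at most $\delta$, and telescope the triangle inequality to get a uniform bound. The only differences are cosmetic (you bound the diameter by $2\rho$ with a ball centered at the origin, whereas the paper centers the ball at a point of $C$), and your closing remark that openness of $D$ is not needed is accurate.
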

\begin{proof}
In view of \eqref{eKKnhf74g15t}, there exists
$\delta\in\RPP$ such that
\begin{equation}
\label{4:e31}
(\forall x\in C)(\forall y\in C)\quad 
\|x-y\|\leq\delta\quad\Rightarrow\quad\|Tx-Ty\|\leq 1.
\end{equation}
Now fix $z\in C$ and $\rho\in\RPP$ such that 
$C\subset\menge{x\in\XX}{\|x-z\|\leq\rho}$, and take
an integer $m\geq 1+\rho/\delta$. 
Let $x\in C$ and set 
\begin{equation}
(\forall n\in\{0,\ldots,m\})\quad  
x_n=x+\dfrac{n}{m}(z-x)\in C.
\end{equation}
Then, for every $n\in\{0,\ldots,m-1\}$,
$\|x_{n+1}-x_n\|=\|z-x\|/m\leq\rho/m\leq\delta$ and
\eqref{4:e31} yields $\|Tx_{n+1}-Tx_n\|\leq 1$. Hence,
$\|Tz-Tx\|\leq\sum_{n=0}^{m-1}\|Tx_{n+1}-Tx_n\|\leq m$ and
therefore $\|Tx\|<\|Tz\|+m$. We conclude that 
$\sup_{x\in C}\|Tx\|\leq\|Tz\|+m$.
\end{proof}

\section{Best Bregman approximation algorithm}
\label{IVsec:V}
Proposition~\ref{ppOlk28ha12}\ref{ppOlk28ha12ia} asserts that 
Problem~\ref{prob:1} reduces to finding the Bregman projection 
of a reference point $(x_0,y_0^*)$ onto the closed convex subset
$\boldsymbol{C}=\boldsymbol{Z}\cap\overline{\dom}\boldsymbol{f}$
of $\overline{\dom}\boldsymbol{f}$. Our strategy is to employ 
Proposition~\ref{p:2} for this task.
The following condition will be used subsequently
(see \cite[Examples~4.10, 5.11, and 5.13]{Sico03} for special cases).

\begin{condition}
{\rm\cite[Condition~4.3(ii)]{Pams03}}
\label{IVC:1}
Let $\XX$ be a reflexive real Banach space and let
$h\in\Gamma_0(\XX)$ be G\^ateaux differentiable on 
$\intdom h\neq\emp$. For every sequence 
$(x_n)_{n\in\NN}$ in $\intdom h$ and every bounded sequence 
$(y_n)_{n\in\NN}$ in $\intdom h$, 
\begin{equation}
\label{IVbcn5y713b}
D^h(x_n,y_n)\to 0\quad\Rightarrow\quad x_n-y_n\to 0.
\end{equation}
\end{condition}

We now derive from Proposition~\ref{p:2} our best Bregman
approximation algorithm to solve Problem~\ref{prob:1}.

\begin{theorem}
\label{IVt:1}
Consider the setting of Problem~\ref{prob:1}. Let 
$h\in\Gamma_0(\XX)$ and $j\in\Gamma_0(\YY)$ be Legendre 
functions such that $\intdom f\subset\intdom h$, 
$L(\intdom f)\subset\intdom j$, and there exist
$\varepsilon$ and $\delta$ in $\RPP$ such that 
$\nabla h+\varepsilon A$ and $\nabla j+\delta B$ are coercive. 
Let $\sigma\in\left[\max\{\varepsilon,\delta\},\pinf\right[$ 
and iterate
\begin{equation}
\label{IVe:Bfg+uh4f9-09h}
\begin{array}{l}
\text{for}\;n=0,1,\ldots\\
\left\lfloor
\begin{array}{l}
(\gamma_n,\mu_n)\in\left[\varepsilon,\sigma\right]
\times\left[\delta,\sigma\right]\\[1mm]
a_n=(\nabla h+\gamma_nA)^{-1}\big(\nabla h(x_n)-\gamma_n 
L^*y_n^*\big)\\[1mm]
a_n^*=\gamma_n^{-1}\big(\nabla h(x_n)-\nabla h(a_n)\big)-L^*y_n^*
\\[1mm]
b_n=(\nabla j+\mu_nB)^{-1}\big(\nabla j(Lx_n)+\mu_n y_n^*\big)
\\[1mm]
b_n^*=\mu_n^{-1}\big(\nabla j(Lx_n)-\nabla j(b_n)\big)+y_n^*
\\[1mm]
\boldsymbol{H}_n=
\menge{(x,y^*)\in\XXX}{\pair{x}{a_n^*+L^*b_n^*}
+\pair{b_n-La_n}{y^*}\leq\pair{a_n}{a_n^*}+\pair{b_n}{b_n^*}}\\
(x_{n+1/2},y_{n+1/2}^*)=
P_{\boldsymbol{H}_n}^{\boldsymbol{f}}
(x_n,y_n^*)\\
(x_{n+1},y_{n+1}^*)=Q^{\boldsymbol{f}}
\big((x_0,y_0^*),(x_n,y_n^*),(x_{n+1/2},y_{n+1/2}^*)\big).
\end{array}
\right.\\
\end{array}
\end{equation}
Then the following hold:
\begin{enumerate}
\item
\label{IVt:1i} 
Let $n\in\NN$. Then the following are equivalent:
\begin{enumerate}
\item
\label{IVt:1ia} 
$(x_n,y_n^*)=(\overline{x},\overline{y}^*)$.
\item
\label{IVt:1ib} 
$(x_n,y_n^*)\in\boldsymbol{Z}$.
\item
\label{IVt:1ic} 
$(x_n,y_n^*)\in\boldsymbol{H}_n$.
\item
\label{IVt:1id} 
$x_n=a_n$ and $y_n^*=b_n^*$.
\item
\label{IVt:1ie} 
$La_n=b_n$ and $a_n^*=-L^*b_n^*$.
\item
\label{IVt:1if} 
$\boldsymbol{H}_n=\XXX$.
\item
\label{IVt:1ig} 
$(x_{n+1/2},y_{n+1/2}^*)=(x_n,y_n^*)$.
\item
\label{IVt:1ih} 
$(x_{n+1},y_{n+1}^*)=(x_n,y_n^*)$.
\end{enumerate}
\item
\label{IVt:1ii} 
$\sum_{n\in\NN}D^f(x_{n+1},x_n)<\pinf$ and 
$\sum_{n\in\NN}D^{g^*}(y_{n+1}^*,y_n^*)<\pinf$.
\item
\label{IVt:1iii} 
$\sum_{n\in\NN}D^f(x_{n+1/2},x_n)<\pinf$ and 
$\sum_{n\in\NN}D^{g^*}(y_{n+1/2}^*,y_n^*)<\pinf$.
\item
\label{IVt:1iv}
Suppose that $f$, $g^*$, $h$, and $j$ satisfy Condition~\ref{IVC:1},
and that $\nabla h$ and $\nabla j$ are uniformly continuous on 
every bounded subset of $\intdom h$ and $\intdom j$, respectively.
Then $x_n\to\overline{x}$ and $y_n^*\to\overline{y}^*$.
\end{enumerate}
\end{theorem}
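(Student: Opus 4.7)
My plan is to fit \eqref{IVe:Bfg+uh4f9-09h} into the abstract scheme of Proposition~\ref{p:2} applied to the closed convex set $\boldsymbol{C}=\boldsymbol{Z}\cap\overline{\dom}\boldsymbol{f}$. I would first check that the resolvents producing $a_n$ and $b_n$ are single-valued and everywhere defined: since $\nabla h+\varepsilon A$ is coercive, Lemma~\ref{IVlhHg7yG02} gives coercivity of $\nabla h+\gamma_n A=(\nabla h+\varepsilon A)+(\gamma_n-\varepsilon)A$, and Proposition~\ref{IVpp:20a} then yields $\dom(\nabla h+\gamma_n A)^{-1}=\XX^*$; symmetrically for $\nabla j+\mu_n B$. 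Reading off the algorithm, $(a_n,a_n^*)\in\gra A$ and $(b_n,b_n^*)\in\gra B$, so by Proposition~\ref{ppOlk28ha12}\ref{ppOlk28ha12iib} the half-space $\boldsymbol{H}_n$ contains $\boldsymbol{Z}$; the variational characterization in Definition~\ref{d:B-haugazeau} further shows $\boldsymbol{H}_n\subset H^{\boldsymbol{f}}((x_n,y_n^*),(x_{n+1/2},y_{n+1/2}^*))$ because $(x_{n+1/2},y_{n+1/2}^*)$ is the Bregman projection of $(x_n,y_n^*)$ onto $\boldsymbol{H}_n$. Proposition~\ref{p:2} therefore applies, and items~\ref{IVt:1ii} and~\ref{IVt:1iii} follow directly from its parts~\ref{p:2ii} and~\ref{p:2iii} via the additive decomposition $D^{\boldsymbol{f}}((x,y^*),(u,v^*))=D^f(x,u)+D^{g^*}(y^*,v^*)$ dictated by \eqref{epOlk28ha14}.

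For the equivalences in~\ref{IVt:1i}, I would close a cycle such as \ref{IVt:1ia}$\Rightarrow$\ref{IVt:1ib}$\Rightarrow$\ref{IVt:1ic}$\Rightarrow$\ref{IVt:1id}$\Rightarrow$\ref{IVt:1ie}$\Rightarrow$\ref{IVt:1if}$\Rightarrow$\ref{IVt:1ig}$\Rightarrow$\ref{IVt:1ih}$\Rightarrow$\ref{IVt:1ia}. The ingredients are Proposition~\ref{ppOlk28ha12}\ref{ppOlk28ha12iia},\ref{ppOlk28ha12iib} (which ties \ref{IVt:1ie}, \ref{IVt:1if}, and membership of $(a_n,b_n^*)$ in $\boldsymbol{Z}$), the uniqueness of the resolvent value (so that \ref{IVt:1ib} forces $a_n=x_n$ and $b_n^*=y_n^*$, hence \ref{IVt:1id}), and Proposition~\ref{p:2}\ref{p:2ii+} to propagate \ref{IVt:1ib} down the iteration.

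The main obstacle is part~\ref{IVt:1iv}. By the first equivalence in Proposition~\ref{p:2}\ref{p:2iv}, strong convergence will follow once every weak sequential cluster point of $(x_n,y_n^*)_{n\in\NN}$ is shown to belong to $\boldsymbol{Z}$, and Proposition~\ref{ppOlk28ha12}\ref{ppOlk28ha12iii} reduces this, along any weakly convergent subsequence, to exhibiting $a_n^*+L^*b_n^*\to 0$ and $La_n-b_n\to 0$ with $(a_n,b_n^*)$ weakly tracking $(x_n,y_n^*)$. The preparatory bookkeeping is: boundedness of $(x_n),(y_n^*)$ from Proposition~\ref{p:2}\ref{p:2i}; boundedness of $(\nabla h(x_n))$ and $(\nabla j(Lx_n))$ via uniform continuity of $\nabla h,\nabla j$ on bounded sets combined with Lemma~\ref{IVlKKnhf74g10}; boundedness of $(a_n),(b_n)$ through Lemma~\ref{IVlKKnhf74g12} with $M_1=\nabla h+\varepsilon A$ and $M_2=A$, and in turn boundedness of $(a_n^*),(b_n^*)$; and, from \ref{IVt:1iii} together with Condition~\ref{IVC:1} for $f$ and $g^*$, the norm convergences $x_{n+1/2}-x_n\to 0$ and $y_{n+1/2}^*-y_n^*\to 0$.

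The crux, which I expect to be the hardest step, is to convert these convergences into $a_n-x_n\to 0$ and $b_n-Lx_n\to 0$. My plan is to write the Lagrange optimality condition for the Bregman projection of $(x_n,y_n^*)$ onto the affine half-space $\boldsymbol{H}_n$: there exists $\lambda_n\geq 0$ such that
\begin{equation*}
\nabla f(x_{n+1/2})-\nabla f(x_n)=-\lambda_n(a_n^*+L^*b_n^*),\qquad
\nabla g^*(y_{n+1/2}^*)-\nabla g^*(y_n^*)=-\lambda_n(b_n-La_n),
\end{equation*}
and then to pair these identities with $(x_n-a_n,y_n^*-b_n^*)$. Using the resolvent relations $\gamma_n(a_n^*+L^*y_n^*)=\nabla h(x_n)-\nabla h(a_n)$ and $\mu_n(b_n^*-y_n^*)=\nabla j(Lx_n)-\nabla j(b_n)$, together with the monotonicity of $A$ and $B$ to absorb the cross terms, I expect to reach an estimate of the form $D^h(a_n,x_n)+D^j(b_n,Lx_n)\to 0$. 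Condition~\ref{IVC:1} on $h$ and $j$ then gives $a_n-x_n\to 0$ and $b_n-Lx_n\to 0$, and uniform continuity of $\nabla h,\nabla j$ on bounded sets upgrades these to $a_n^*+L^*y_n^*\to 0$ and $b_n^*-y_n^*\to 0$. Combined with $La_n-Lx_n\to 0$, this supplies the four conditions of Proposition~\ref{ppOlk28ha12}\ref{ppOlk28ha12iii} along any weakly convergent subsequence of $(x_n,y_n^*)$, and Proposition~\ref{p:2}\ref{p:2iv} finishes the proof.
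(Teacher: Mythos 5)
Your architecture is the paper's: parts \ref{IVt:1i}--\ref{IVt:1iii} via Proposition~\ref{p:2} applied to $\boldsymbol{C}=\boldsymbol{Z}\cap\overline{\dom}\boldsymbol{f}$, the well-posedness of the resolvents via Lemma~\ref{IVlhHg7yG02} and Proposition~\ref{IVpp:20a}, and, for \ref{IVt:1iv}, the reduction to locating weak cluster points in $\boldsymbol{Z}$ through Proposition~\ref{p:2}\ref{p:2iv} and Proposition~\ref{ppOlk28ha12}\ref{ppOlk28ha12iii}, preceded by exactly the boundedness bookkeeping the paper carries out. All of that is sound.

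The gap is in the step you yourself flag as the crux. Pairing the stationarity equation $\nabla\boldsymbol{f}(\boldsymbol{x}_{n+1/2})=\nabla\boldsymbol{f}(\boldsymbol{x}_n)-\lambda_n\boldsymbol{s}_n^*$ with $(x_n-a_n,y_n^*-b_n^*)$ gives
\begin{equation*}
\Pair{x_n-a_n}{\nabla f(x_{n+1/2})-\nabla f(x_n)}
+\Pair{\nabla g^*(y_{n+1/2}^*)-\nabla g^*(y_n^*)}{y_n^*-b_n^*}=-\lambda_n\,q_n,
\quad\text{where}\quad
q_n=\pair{x_n-a_n}{a_n^*+L^*y_n^*}+\pair{Lx_n-b_n}{b_n^*-y_n^*},
\end{equation*}
and to conclude $q_n\to 0$ from this you would need both a lower bound on $\lambda_n$ away from $0$ (false: $\lambda_n=0$ whenever $\boldsymbol{x}_n\in\boldsymbol{H}_n$, and nothing prevents $\lambda_n\downarrow 0$) and control of $\nabla f(x_{n+1/2})-\nabla f(x_n)$ and $\nabla g^*(y_{n+1/2}^*)-\nabla g^*(y_n^*)$, which the hypotheses do not provide: uniform continuity on bounded sets is assumed only for $\nabla h$ and $\nabla j$, not for $\nabla f$ and $\nabla g^*$, and Condition~\ref{IVC:1} only converts $D^f(x_{n+1/2},x_n)\to 0$ into $x_{n+1/2}-x_n\to 0$. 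Note also that monotonicity of $A$ and $B$ plays no role in ``absorbing cross terms'': the identity $q_n=\pair{x_n}{a_n^*+L^*b_n^*}+\pair{b_n-La_n}{y_n^*}-\pair{a_n}{a_n^*}-\pair{b_n}{b_n^*}$ is purely algebraic. The multiplier is a detour you should drop entirely; the paper's \eqref{IVe:8re84} uses only primal feasibility, $\boldsymbol{x}_{n+1/2}\in\boldsymbol{H}_n$, which gives
$q_n\leq\pair{x_n-x_{n+1/2}}{a_n^*+L^*b_n^*}+\pair{b_n-La_n}{y_n^*-y_{n+1/2}^*}
\leq\|x_n-x_{n+1/2}\|\,\|a_n^*+L^*b_n^*\|+\|b_n-La_n\|\,\|y_n^*-y_{n+1/2}^*\|\to 0$,
while the resolvent relations and $\pair{u-v}{\nabla h(u)-\nabla h(v)}=D^h(u,v)+D^h(v,u)$ give $q_n\geq\sigma^{-1}\big(D^h(x_n,a_n)+D^j(Lx_n,b_n)\big)$. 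With that substitution the remainder of your argument closes as you describe.
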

\begin{proof}
We apply Proposition~\ref{p:2} to 
\begin{equation}
\label{4e:C}
\boldsymbol{C}=\boldsymbol{Z}\cap\overline{\dom}\boldsymbol{f}.
\end{equation}
It follows from Proposition~\ref{ppOlk28ha12}\ref{ppOlk28ha12ia}
and our assumptions that $\boldsymbol{C}$ is a closed convex 
subset of $\overline{\dom}\boldsymbol{f}$ and that
$\boldsymbol{C}\cap\intdom\boldsymbol{f}\neq\emp$. 
Moreover, Proposition~\ref{pvb8uh4504}\ref{pvb8uh4504i}
asserts that $\boldsymbol{f}$ is a Legendre function.
Now let $\gamma\in[\varepsilon,\pinf[$ and let 
$\mu\in[\delta,\pinf[$.
Since $h$ is strictly convex, $\nabla h$ is 
strictly monotone \cite[Theorem~2.4.4(ii)]{IVZali02} and 
$\nabla h+\gamma A$ is likewise. Let $(x^*,x_1)$ and 
$(x^*,x_2)$ be two elements in $\gra(\nabla h+\gamma A)^{-1}$ 
such that $x_1\not=x_2$. Then $(x_1,x^*)$ and $(x_2,x^*)$ lie in 
$\gra(\nabla h+\gamma A)$ and the strict monotonicity of 
$\nabla h+\gamma A$ implies that 
\begin{equation}
0=\pair{x_1-x_2}{x^*-x^*}>0,
\end{equation}
which is impossible. Thus,
\begin{equation}
\label{4e:single1}
(\nabla h+\gamma A)^{-1}\;\text{is at most single-valued}.
\end{equation} 
The same argument shows that
\begin{equation}
\label{4e:single2}
(\nabla j+\mu B)^{-1}\;\text{is at most single-valued}.
\end{equation} 
On the other hand, by assumption, there exists
$(x,y^*)\in\boldsymbol{Z}\cap\intdom\boldsymbol{f}$.
It follows from \eqref{IVe:VKT} that $x\in\dom A$ and 
$Lx\in\dom B$. Furthermore, \eqref{ebcn5y731} yields 
$x\in\intdom f$. Therefore
\begin{equation}
\label{ehHg7yG10a}
\begin{cases}
x\in\dom A\cap\intdom f\subset\dom A\cap\intdom h\\
Lx\in\dom B\cap L(\intdom f)\subset\dom B\cap\intdom j.
\end{cases}
\end{equation}
Thus, $\dom A\cap\intdom h\neq\emp$ and
$\dom B\cap\intdom j\neq\emp$. It therefore follows from
Lemma~\ref{IVlhHg7yG02} that
\begin{equation}
\label{4e:single3}
\nabla h+\gamma A=(\nabla h+\varepsilon A)+(\gamma-\varepsilon)A 
\quad\text{and}\quad
\nabla j+\mu B=(\nabla j+\delta B)+(\mu-\delta)B
\;\;\text{are coercive}.
\end{equation}
Altogether, \eqref{4e:single1}, \eqref{4e:single2}, 
\eqref{4e:single3}, and Proposition~\ref{IVpp:20a} assert 
that the operators
\begin{equation}
\label{4:e12}
\begin{cases}
(\nabla h+\gamma A)^{-1}\colon\XX^*\to\dom A\cap\intdom h\\
(\nabla j+\mu B)^{-1}\colon\YY^*\to\dom B\cap\intdom j
\end{cases}
\end{equation}
are well defined and single-valued. Now set
\begin{equation}
\label{eKKnhf74g05}
(\forall n\in\NN)\quad
\boldsymbol{x}_n=(x_n,y_n^*)\quad\text{and}\quad
\boldsymbol{x}_{n+1/2}=(x_{n+1/2},y_{n+1/2}^*).
\end{equation}
Since \eqref{IVe:Bfg+uh4f9-09h} yields
\begin{equation}
\label{eKKnhf74g04}
(\forall n\in\NN)\quad
(a_n,a_n^*)\in\gra A\quad\text{and}\quad (b_n,b_n^*)\in\gra B,
\end{equation}
it follows from \eqref{4e:C}, 
Proposition~\ref{ppOlk28ha12}\ref{ppOlk28ha12iib},
\eqref{IVe:Bfg+uh4f9-09h}, and 
Definition~\ref{d:B-haugazeau} that
\begin{equation}
\label{eKKnhf74g03}
(\forall n\in\NN)\quad
\emp\neq\boldsymbol{C}\subset\boldsymbol{Z}\subset\boldsymbol{H}_n
=H^{\boldsymbol{f}}(\boldsymbol{x}_n,\boldsymbol{x}_{n+1/2}).
\end{equation}
Hence, appealing to 
Proposition~\ref{pvb8uh4504}\ref{pvb8uh4504i} and 
\eqref{e:2001}, we see that
\begin{equation}
\label{IVe:32a}
(\forall n\in\NN)\quad
P_{\boldsymbol{H}_n}^{\boldsymbol{f}}
\colon\intdom\boldsymbol{f}\to
\boldsymbol{H}_n\cap\intdom\boldsymbol{f}
\end{equation}
and that
\begin{equation}
\label{eKKnhf74g04i}
(\forall n\in\NN)\quad
\boldsymbol{x}_{n+1}=Q^{\boldsymbol{f}}
\big(\boldsymbol{x}_0,\boldsymbol{x}_n,\boldsymbol{x}_{n+1/2}\big).
\end{equation}
Thus, we derive from 
\eqref{eKKnhf74g05}, \eqref{eKKnhf74g03}, and 
Proposition~\ref{p:2}\ref{p:2i} that
$(x_n)_{n\in\NN}$ and $(y_n^*)_{n\in\NN}$ are well-defined
sequences in $\intdom f$ and $\intdom g^*$, respectively.

\ref{IVt:1i}: We prove the following implications.

\ref{IVt:1ia}$\Rightarrow$\ref{IVt:1ib}: Clear.

\ref{IVt:1ib}$\Rightarrow$\ref{IVt:1ia}: 
Proposition~\ref{p:2}\ref{p:2ii+}.

\ref{IVt:1ib}$\Rightarrow$\ref{IVt:1ic}: Clear by
\eqref{eKKnhf74g03}.

\ref{IVt:1ic}$\Rightarrow$\ref{IVt:1id}: In view of 
\eqref{IVe:Bfg+uh4f9-09h},
\begin{align}
0
&\geq\pair{x_n}{a_n^*+L^*b_n^*}
+\pair{b_n-La_n}{y_n^*}-\pair{a_n}{a_n^*}-\pair{b_n}{b_n^*}
\nonumber\\
&=\pair{x_n-a_n}{a_n^*+L^*y_n^*}+\pair{Lx_n-b_n}{b_n^*-y_n^*}
\nonumber\\
&=\gamma_n^{-1}\pair{x_n-a_n}{\nabla h(x_n)-\nabla h(a_n)}
+\mu_n^{-1}\pair{Lx_n-b_n}{\nabla j(Lx_n)-\nabla j(b_n)}.
\end{align}
Consequently, the strict monotonicity of $\nabla h$ and $\nabla j$ 
yields
\begin{equation}
x_n=a_n\quad\text{and}\quad Lx_n=b_n.
\end{equation}
Furthermore,
\begin{equation}
b_n^*=\mu_n^{-1}\big(\nabla j(Lx_n)-\nabla j(b_n)\big)+y_n^*
=\mu_n^{-1}\big(\nabla j(b_n)-\nabla j(b_n)\big)+y_n^*
=y_n^*.
\end{equation}

\ref{IVt:1id}$\Rightarrow$\ref{IVt:1ie}: We derive from 
\eqref{IVe:Bfg+uh4f9-09h} that
$a_n^*=\gamma_n^{-1}(\nabla h(x_n)-\nabla h(a_n))-L^*y_n^*
=-L^*y_n^*=-L^*b_n^*$. On the other hand, since
\begin{align}
\label{IWeQA8h33-30a}
\pair{La_n-b_n}{\nabla j(La_n)-\nabla j(b_n)}
&=\pair{Lx_n-b_n}{\nabla j(Lx_n)-\nabla j(b_n)}\nonumber\\
&=\mu_n\pair{Lx_n-b_n}{b_n^*-y_n^*}\nonumber\\
&=0,
\end{align}
the strict monotonicity of $\nabla j$ yields $La_n=b_n$.

\ref{IVt:1ie}$\Leftrightarrow$\ref{IVt:1if}: 
Proposition~\ref{ppOlk28ha12}\ref{ppOlk28ha12iia}.

\ref{IVt:1if}$\Rightarrow$\ref{IVt:1ig}: Indeed,
$\boldsymbol{x}_{n+1/2}=P_{\boldsymbol{H}_n}^{\boldsymbol{f}}
\boldsymbol{x}_n=\boldsymbol{x}_n$.

\ref{IVt:1ig}$\Rightarrow$\ref{IVt:1ih}: We have
\begin{equation}
\label{IWeQA8h33-30b}
\boldsymbol{x}_{n+1}=Q^{\boldsymbol{f}}(\boldsymbol{x}_0,
\boldsymbol{x}_n,\boldsymbol{x}_{n+1/2})=
P^{\boldsymbol{f}}_{H^{\boldsymbol{f}}
(\boldsymbol{x}_0,\boldsymbol{x}_n)\cap H^{\boldsymbol{f}}
(\boldsymbol{x}_n,\boldsymbol{x}_{n+1/2})}
\boldsymbol{x}_0=P^{\boldsymbol{f}}_{H^{\boldsymbol{f}}
(\boldsymbol{x}_0,\boldsymbol{x}_n)}\boldsymbol{x}_0
=\boldsymbol{x}_n.
\end{equation}

\ref{IVt:1ih}$\Rightarrow$\ref{IVt:1ig}:  By
Proposition~\ref{p:2}\ref{p:2iii-},
$0\leq D^{\boldsymbol{f}}(\boldsymbol{x}_{n+1/2},\boldsymbol{x}_n)
\leq D^{\boldsymbol{f}}(\boldsymbol{x}_{n+1},\boldsymbol{x}_n)=0$.
Therefore 
$D^{\boldsymbol{f}}(\boldsymbol{x}_{n+1/2},\boldsymbol{x}_n)=0$
and we derive from \cite[Lemma~7.3(vi)]{Ccm01}
that $\boldsymbol{x}_{n+1/2}=\boldsymbol{x}_n$.

\ref{IVt:1ig}$\Rightarrow$\ref{IVt:1ic}: Indeed,
$\boldsymbol{x}_n=\boldsymbol{x}_{n+1/2}=
P^{\boldsymbol{f}}_{\boldsymbol{H}_n}
\boldsymbol{x}_n\in\boldsymbol{H}_n$.

\ref{IVt:1id}$\Rightarrow$\ref{IVt:1ib}: 
We derive from \eqref{IVe:Bfg+uh4f9-09h} that
\begin{equation}
\label{IWeQA8h33-30c}
\nabla h(x_n)-\gamma_n L^*y_n^*\in\nabla h(a_n)+\gamma_n Aa_n
=\nabla h(x_n)+\gamma_n Ax_n. 
\end{equation}
Hence $-L^*y_n^*\in Ax_n$. Likewise, as in 
\eqref{IWeQA8h33-30a}, we first obtain $Lx_n=b_n$ and then
\begin{equation}
\label{IWeQA8h33-30d}
\nabla j(Lx_n)+\mu_n y_n^*\in\nabla j(b_n)+\mu_n Bb_n
=\nabla j(Lx_n)+\mu_n B(Lx_n).
\end{equation}
Thus, $y_n^*\in B(Lx_n)$, i.e., $Lx_n\in B^{-1}y_n^*$.
In view of \eqref{IVe:VKT}, the implication is proved.

\ref{IVt:1ii}: 
Proposition~\ref{p:2}\ref{p:2ii} yields
\begin{equation}
\label{IVe:2312b}
\sum_{n\in\NN}D^f(x_{n+1},x_n)+\sum_{n\in\NN}D^{g^*}(y_{n+1}^*,y_n^*)
=\sum_{n\in\NN}D^{\boldsymbol{f}}(\boldsymbol{x}_{n+1},
\boldsymbol{x}_n)<\pinf.
\end{equation}

\ref{IVt:1iii}: Proposition~\ref{p:2}\ref{p:2iii} yields 
\begin{equation}
\sum_{n\in\NN}D^f(x_{n+1/2},x_n)
+\sum_{n\in\NN}D^{g^*}(y_{n+1/2}^*,y_n^*)=
\sum_{n\in\NN}D^{\boldsymbol{f}}(\boldsymbol{x}_{n+1/2},
\boldsymbol{x}_n)<\pinf.
\end{equation}

\ref{IVt:1iv}: 
Proposition~\ref{p:2}\ref{p:2i} implies that
$(\boldsymbol{x}_n)_{n\in\NN}$ is a bounded sequence 
in $\intdom\boldsymbol{f}$. In turn, 
\begin{equation}
\label{eKKnhf74g15a}
(x_n)_{n\in\NN}\in(\intdom f)^{\NN}\quad\text{and}\quad
(y^*_n)_{n\in\NN}\in(\intdom g^*)^{\NN}\quad\text{are bounded}.
\end{equation}
On the other hand, by \eqref{IVe:Bfg+uh4f9-09h},
\begin{equation}
\label{IWeQA8h33-31a}
(\forall n\in\NN)\quad
(x_{n+1/2},y_{n+1/2}^*)=
\boldsymbol{x}_{n+1/2}=P_{\boldsymbol{H_n}}^{\boldsymbol{f}}
\boldsymbol{x}_n\in\boldsymbol{H}_n
\end{equation}
and
\begin{equation}
\label{IVe:IV8}
(\forall n\in\NN)\quad\pair{x_{n+1/2}}{a_n^*+L^*b_n^*}
+\pair{b_n-La_n}{y_{n+1/2}^*}
=\pair{a_n}{a_n^*}+\pair{b_n}{b_n^*}.
\end{equation}
Therefore,
\begin{align}
\label{IVe:8re84}
(\forall n\in\NN)\quad
&\|x_n-x_{n+1/2}\|\,\|a_n^*+L^*b_n^*\|
+\|b_n-La_n\|\,\|y_n^*-y_{n+1/2}^*\|\nonumber\\
&\geq\pair{x_n-x_{n+1/2}}{a_n^*+L^*b_n^*}
+\pair{b_n-La_n}{y_n^*-y_{n+1/2}^*}\nonumber\\
&=\pair{x_n}{a_n^*+L^*b_n^*}
+\pair{b_n-La_n}{y_n^*}
-\pair{a_n}{a_n^*}-\pair{b_n}{b_n^*}\nonumber\\
&=\pair{x_n-a_n}{a_n^*+L^*y_n^*}
+\pair{Lx_n-b_n}{b_n^*-y_n^*}\nonumber\\
&=\gamma_n^{-1}\pair{x_n-a_n}{\nabla h(x_n)
-\nabla h(a_n)}+\mu_n^{-1}
\pair{Lx_n-b_n}{\nabla j(Lx_n)-\nabla j(b_n)}
\nonumber\\
&\geq\sigma^{-1}\big(D^h(x_n,a_n)+D^h(a_n,x_n)
+D^j(Lx_n,b_n)+D^j(b_n,Lx_n)\big)\nonumber\\
&\geq\sigma^{-1}\big(D^h(x_n,a_n)+D^j(Lx_n,b_n)\big).
\end{align}
However, since \ref{IVt:1iii} yields
\begin{equation}
\label{IVe:IV4}
D^f(x_{n+1/2},x_n)\to 0
\quad\text{and}\quad
D^{g^*}(y_{n+1/2}^*,y_n^*)\to 0
\end{equation}
and since $f$ and $g^*$ satisfy Condition~\ref{IVC:1}, 
\eqref{IVbcn5y713b} yields
\begin{equation}
\label{IVe:IV5}
x_{n+1/2}-x_n\to 0
\quad\text{and}\quad
y_{n+1/2}^*-y_n^*\to 0.
\end{equation}
Since $\nabla h$ is uniformly continuous on every bounded subset of 
$\intdom h$, Lemma~\ref{IVlKKnhf74g10} asserts that $\nabla h$ 
is bounded on every bounded subset of $\intdom h$ and hence, 
since $\inte\dom f\subset\inte\dom h$ and $L^*$ is bounded, 
it follows from \eqref{eKKnhf74g15a} that
$\big(\nabla h(x_n)-\gamma_nL^*y_n^*\big)_{n\in\NN}$ is bounded. 
We therefore deduce from \eqref{4:e12}, 
\eqref{IVe:Bfg+uh4f9-09h},
and Lemma~\ref{IVlKKnhf74g12} that
\begin{equation}
\label{4:e18}
(a_n)_{n\in\NN}\in(\intdom h)^{\NN}\;\;\text{is bounded}.
\end{equation}
Similarly, since $\nabla j$ is uniformly continuous on every 
bounded subset 
of $\intdom j$ and $L(\intdom f)\subset\intdom j$, it follows 
from \eqref{eKKnhf74g15a} and Lemma~\ref{IVlKKnhf74g10} 
that $\big(\nabla j(Lx_n)+\mu_ny_n^*\big)_{n\in\NN}$ is bounded 
and hence \eqref{4:e12}, \eqref{IVe:Bfg+uh4f9-09h},
and Lemma~\ref{IVlKKnhf74g12} yield
\begin{equation}
\label{4:e19}
(b_n)_{n\in\NN}\in(\intdom j)^{\NN}\;\;\text{is bounded}.
\end{equation} 
Thus, $(\nabla h(x_n))_{n\in\NN}$, $(\nabla h(a_n))_{n\in\NN}$,
$(\nabla j(Lx_n))_{n\in\NN}$, and $(\nabla j(b_n))_{n\in\NN}$ 
are bounded and we deduce from \eqref{IVe:Bfg+uh4f9-09h} that 
\begin{equation}
\label{eKKnhf74g15b}
(a^*_n)_{n\in\NN}\quad\text{and}\quad(b^*_n)_{n\in\NN}
\quad\text{are bounded}.
\end{equation}
We therefore derive from 
\eqref{IVe:8re84}, \eqref{IVe:IV5}, \eqref{4:e18}, and 
\eqref{4:e19} that
\begin{equation}
\label{eKKnhf74g15h}
D^h(x_n,a_n)\to 0\quad\text{and}\quad
D^j(Lx_n,b_n)\to 0.
\end{equation}
Since $h$ and $j$ satisfy Condition~\ref{IVC:1}, we get
\begin{equation}
\label{IVe:IV10}
x_n-a_n\to 0\quad\text{and}\quad Lx_n-b_n\to 0.
\end{equation}
Therefore, since $\nabla h$ is uniformly continuous on every 
bounded subset of $\intdom h$ and $\nabla j$ is uniformly 
continuous on every bounded subset of $\intdom j$,
\begin{equation}
\label{IVe:IV12a}
\nabla h(x_n)-\nabla h(a_n)\to 0\quad\text{and}\quad
\nabla j(Lx_n)-\nabla j(b_n)\to 0.
\end{equation}
Hence, using \eqref{IVe:Bfg+uh4f9-09h}, we get
\begin{equation}
\label{IVe:IV13}
a_n^*+L^*y_n^*\to 0
\quad\text{and}\quad 
b_n^*-y_n^*\to 0.
\end{equation}
Now, let 
$\boldsymbol{x}=(x,y^*)\in\WC(\boldsymbol{x}_n)_{n\in\NN}$,
say $\boldsymbol{x}_{k_n}\weakly\boldsymbol{x}$. 
Then $x_{k_n}\weakly x$ and $y_{k_n}^*\weakly y^*$, and we 
derive from \eqref{IVe:IV10} and \eqref{IVe:IV13} that
\begin{equation}
\label{IVe:IV11}
\begin{cases}
a_{k_n}\weakly x\\
b^*_{k_n}\weakly y^*
\end{cases}
\quad\text{and}\quad
\begin{cases}
La_{k_n}-b_{k_n}\to 0\\
a^*_{k_n}+L^*b^*_{k_n}\to 0.
\end{cases}
\end{equation}
It therefore follows from \eqref{eKKnhf74g04},
Proposition~\ref{ppOlk28ha12}\ref{ppOlk28ha12iii},
and \eqref{eKKnhf74g15a} that
$\boldsymbol{x}\in\boldsymbol{Z}\cap\overline{\dom}\boldsymbol{f}
=\boldsymbol{C}$. Hence, we derive from 
Proposition~\ref{p:2}\ref{p:2iv} that 
\begin{equation}
\label{eKKnhf74g15x}
D^{f}({x}_n,\overline{x})+D^{g^*}(y^*_n,\overline{y}^*)=
D^{\boldsymbol{f}}(\boldsymbol{x}_n,\overline{\boldsymbol{x}})\to 0,
\end{equation}
where $\overline{\boldsymbol{x}}=(\overline{x},\overline{y}^*)$.
Hence, $D^{f}({x}_n,\overline{x})\to 0$, 
$D^{g^*}(y^*_n,\overline{y}^*)\to 0$, and, 
since $f$ and $g^*$ satisfy Condition~\ref{IVC:1}, we conclude
that $x_n\to\overline{x}$ and $y^*_n\to\overline{y}^*$.
\end{proof}

\begin{remark}
\label{IVrKKnhf74g31}
We provide a couple of settings that satisfy the assumptions of
Theorem~\ref{IVt:1}.
\begin{enumerate}
\item
In Problem~\ref{prob:1}, suppose that $\XX$ and $\YY$ are Hilbert 
spaces, that $f=\|\cdot\|^2/2$, and that $g=\|\cdot\|^2/2$.
Furthermore, in Theorem~\ref{IVt:1}, set $h=f$ and $j=g$, and note
that, for any $\varepsilon\in\RPP$, 
$\nabla h+\varepsilon A=\Id+\varepsilon A$ and 
$\nabla j+\varepsilon B=\Id+\varepsilon B$ are strongly monotone 
and hence coercive by Lemma~\ref{l:20}\ref{l:20ii}.
Then we recover the framework of \cite{Nfao15}, 
which has been applied to domain decomposition problems in 
\cite{Atto15}. 
\item
Let $(\Omega_1,\mathcal{F}_1,\mu_1)$ and
$(\Omega_2,\mathcal{F}_2,\mu_2)$ be measure spaces,
let $p$ and $q$ be in $\left]1,\pinf\right[$, and set
$p^*=p/(p-1)$ and $q^*=q/(q-1)$. In Problem~\ref{prob:1}, suppose 
that $\XX=L^p(\Omega_1,\mathcal{F}_1,\mu_1)$, 
$\YY=L^q(\Omega_2,\mathcal{F}_2,\mu_2)$, $f=\|\cdot\|^p/p$, 
and $g=\|\cdot\|^q/q$. Then
$\XX^*=L^{p^*}(\Omega_1,\mathcal{F}_1,\mu_1)$,
$\YY^*=L^{q^*}(\Omega_2,\mathcal{F}_2,\mu_2)$, and
$g^*=\|\cdot\|^{q^*}/q^*$. Moreover, 
it follows from Clarkson's theorem 
\cite[Theorem~II.4.7]{IVCi90_B} that $\XX$, $\XX^*$,
$\YY$, and $\YY^*$ are uniformly convex and uniformly smooth.
Hence, we derive from \cite[Corollary~5.5 and Example~6.5]{Ccm01} 
that $f$, $g$, and $g^*$ are Legendre functions which are 
uniformly convex on every bounded set, and which therefore 
satisfy Condition~\ref{IVC:1} by virtue of 
\cite[Example~4.10(i)]{Sico03}. 
Now set $h=f$ and $j=g$ in Theorem~\ref{IVt:1}.
We derive from 
\cite[Theorem~II.2.16(i)]{IVCi90_B} that $\nabla h$ and $\nabla j$ 
are uniformly continuous on every bounded subset of $\XX$ and 
$\YY$, respectively. 
In addition, $h$ and $j$ are supercoercive and therefore,
for any $\varepsilon\in\RPP$, 
Proposition~\ref{IVpp:20a}\ref{IVcl:20b4} asserts that 
$\nabla h+\varepsilon A$ and $\nabla j+\varepsilon B$ are 
coercive. Finally, it follows from 
\cite[Proposition~II.4.9]{IVCi90_B} that
$\nabla h\colon x\mapsto |x|^{p-1}\sign(x)$ and 
$\nabla j\colon y\mapsto |y|^{q-1}\sign(y)$.
\end{enumerate}
\end{remark}

\begin{remark}
The implementation of algorithm \eqref{IVe:Bfg+uh4f9-09h}
requires the evaluation of the operator $(\nabla h+A)^{-1}$.
We provide a simple example in the Euclidean plane $\XX$ of a
maximally monotone operator $A$ for which 
$(\nabla h+A)^{-1}$ can be computed explicitly, whereas
the classical resolvent $(\Id+A)^{-1}$ is difficult to evaluate.
Let $\beta\in\RPP$ and let $\psi\colon\RR\to\RR$ be a Legendre 
function with a $\beta$ Lipschitz-continuous derivative.
Set
\begin{equation}
A\colon\RR^2\to\RR^2\colon (\xi_1,\xi_2)\mapsto
\big(\beta\xi_1-\psi'(\xi_1)-\xi_2,
\xi_1+\beta\xi_2-\psi'(\xi_2)\big)
\end{equation}
and 
\begin{equation}
h\colon\RR^2\to\RX\colon (\xi_1,\xi_2)\mapsto
\psi(\xi_1)+\psi(\xi_2).
\end{equation}
Then it follows from \cite[Theorem~18.15]{Livre1} that $A$ is the
sum of the gradient of the convex function 
$(\xi_1,\xi_2)\mapsto\beta\xi_1^2/2-\psi(\xi_1)+
\beta\xi_2^2/2-\psi(\xi_2)$ 
and of the skew linear operator $(\xi_1,\xi_2)\mapsto(-\xi_2,\xi_1)$.
Thus, $A$ is a maximally monotone operator 
\cite[Corollary~24.4]{Livre1} which is not the
subdifferential of a convex function. In addition, as in 
Proposition~\ref{pvb8uh4504}\ref{pvb8uh4504i},
$h$ is a Legendre function and
\begin{equation}
\big(\forall (\xi_1,\xi_2)\in\RR^2\big)\quad 
(\nabla h+A)^{-1}(\xi_1,\xi_2)
=\Bigg(\dfrac{\beta\xi_1+\xi_2}{1+\beta^2},
\dfrac{\beta\xi_2-\xi_1}{1+\beta^2}\Bigg).
\end{equation}
\end{remark}

\begin{remark}
\label{rvb8uh4504}
At every iteration $n$, algorithm \eqref{IVe:Bfg+uh4f9-09h}
requires the computation of $\boldsymbol{x}_{n+1/2}=
P_{\boldsymbol{H}_n}^{\boldsymbol{f}}(x_n,y_n^*)$ and then of
$\boldsymbol{x}_{n+1}=Q^{\boldsymbol{f}}((x_0,y_0^*),(x_n,y_n^*),
(x_{n+1/2},y_{n+1/2}^*))$. 
Set $\boldsymbol{s}_n^*=(a_n^*+L^*b_n^*,b_n-La_n)$, 
$\eta_n=\pair{a_n}{a_n^*}+\pair{b_n}{b_n^*}$, and
$\boldsymbol{x}_n=(x_n,y_n^*)$,
Then, if $\boldsymbol{x}_n\not\in\boldsymbol{H}_n$, 
$\boldsymbol{x}_{n+1/2}$ is the Bregman projection of 
$\boldsymbol{x}_n$ onto the closed affine 
hyperplane $\menge{\boldsymbol{x}\in\XXX}
{\pair{\boldsymbol{x}}{\boldsymbol{s}_n^*}=\eta_n}$. Thus,
$\boldsymbol{x}_{n+1/2}$ is the solution of the problem 
\begin{equation}
\label{evb8uh4505a}
\minimize{\pair{\boldsymbol{p}}{\boldsymbol{s}_n^*}=\eta_n}
{\boldsymbol{f}(\boldsymbol{p})-\pair{\boldsymbol{p}}{\nabla
\boldsymbol{f}(\boldsymbol{x}_n)}}
\end{equation}
which, using standard first order conditions, is characterized by
(see also \cite[Remark~6.13]{BB97} and 
\cite[Lemma~2.2.1]{Cens97}) 
\begin{equation}
\label{evb8uh4505b}
\begin{cases}
\nabla\boldsymbol{f}(\boldsymbol{x}_{n+1/2})=\nabla
\boldsymbol{f}(\boldsymbol{x}_n)-\lambda\boldsymbol{s}_n^*\\
\pair{\boldsymbol{x}_{n+1/2}}{\boldsymbol{s}_n^*}=\eta_n\\
\lambda\in\RPP.
\end{cases}
\end{equation}
In view of \cite[Theorem~5.10]{Ccm01}, the Lagrange multiplier 
$\lambda$ is uniquely determined by the equation 
$\pair{\nabla\boldsymbol{f}^*(\nabla\boldsymbol{f}
(\boldsymbol{x}_n)-\lambda\boldsymbol{s}_n^*)}
{\boldsymbol{s}_n^*}=\eta_n$. The problem therefore reduces to
finding the solution $\overline{\lambda}$ to this equation in $\RPP$
and then setting $\boldsymbol{x}_{n+1/2}=\nabla\boldsymbol{f}^*(
\nabla\boldsymbol{f}(\boldsymbol{x}_n)-\overline{\lambda}
\boldsymbol{s}_n^*)$. Likewise, it follows from \eqref{e:Q} that 
$\boldsymbol{x}_{n+1}$ is the unique solution to the problem 
\begin{equation}
\label{evb8uh4505c}
\minimize{\substack{\pair{\boldsymbol{p}-\boldsymbol{x}_n}
{\nabla\boldsymbol{f}(\boldsymbol{x}_0)-
\nabla\boldsymbol{f}(\boldsymbol{x}_n)}\leq 0\\
\pair{\boldsymbol{p}-\boldsymbol{x}_{n+1/2}}
{\nabla\boldsymbol{f}(\boldsymbol{x}_{n})-
\nabla\boldsymbol{f}(\boldsymbol{x}_{n+1/2})}\leq 0}}
{\boldsymbol{f}(\boldsymbol{p})-\pair{\boldsymbol{p}}{\nabla
\boldsymbol{f}(\boldsymbol{x}_0)}}.
\end{equation}
Depending on the number of active constraints, this 
problem boils down to determining 
up to two Lagrange multipliers in $\RPP$.
\end{remark}

Next, we consider a specialization of Problem~\ref{prob:1} to
multivariate structured minimization.

\begin{problem}
\label{IVpb:V3}
Let $m$ and $p$ be strictly positive integers, let
$(\XX_i)_{1\leq i\leq m}$ and $(\YY_k)_{1\leq k\leq p}$ be 
reflexive real Banach spaces, and let 
$\boldsymbol{\XX}$ be the standard vector product space 
$\Big(\Cart_{\!\!i=1}^{\!\!m}\XX_i\Big)\times
\Big(\Cart_{\!\!k=1}^{\!\!p}\YY_k^*\Big)$ 
equipped with the norm 
\begin{equation}
(x,y^*)=\big((x_i)_{1\leq i\leq m},(y_k^*)_{1\leq k\leq p}\big)
\mapsto\sqrt{\sum_{i=1}^m\|x_i\|^2+\sum_{k=1}^p\|y_k^*\|^2}.
\end{equation} 
For every $i\in\{1,\ldots,m\}$ and every $k\in\{1,\ldots,p\}$, 
let $\varphi_i\in\Gamma_0(\XX_i)$, let $\psi_k\in\Gamma_0(\YY_k)$,
and let $L_{ki}\colon\XX_i\to\YY_k$ be linear and bounded. 
Consider the primal problem 
\begin{equation}
\label{4eKKnhf74g201}
\minimize{x_1\in\XX_1,\ldots,\,x_m\in\XX_m}{\sum_{i=1}^m
\varphi_i(x_i)+\sum_{k=1}^p 
\psi_k\bigg(\sum_{i=1}^mL_{ki}x_i\bigg)},
\end{equation}
the dual problem
\begin{equation}
\label{4eKKnhf74g202}
\minimize{y^*_1\in\YY_1^*,\ldots,\,y^*_p\in\YY_p^*}{\sum_{i=1}^m
\varphi_i^*\bigg(-\sum_{k=1}^pL_{ki}^*y^*_k\bigg)
+\sum_{k=1}^p\psi^*_k(y^*_k)}, 
\end{equation}
and let 
\begin{multline}
\boldsymbol{Z}=\bigg\{({x},{y}^*)
\in\boldsymbol{\mathcal{X}}\;\bigg |\;
(\forall i\in\{1,\ldots,m\})\;\;-\sum_{k=1}^pL_{ki}^*y_k^*
\in\partial\varphi_i(x_i)\:\;\text{and}\\
(\forall k\in\{1,\ldots,p\})\;\;\sum_{i=1}^mL_{ki}x_i\in
\partial\psi_k^*(y_k^*)\bigg\}
\end{multline}
be the associated Kuhn-Tucker set. For every $i\in\{1,\ldots,m\}$,
let $f_i\in\Gamma_0(\XX_i)$ be a Legendre function and let
$x_{i,0}\in\inte\dom f_i$. For every $k\in\{1,\ldots,p\}$, 
let $g_k\in\Gamma_0(\YY_k)$ be a Legendre function and let
$y^*_{k,0}\in\inte\dom g^*_k$. Set
${x}_0=(x_{i,0})_{1\leq i\leq m}$, 
${y}_0^*=(y_{k,0}^*)_{1\leq k\leq p}$, and 
\begin{equation}
\boldsymbol{f}\colon\boldsymbol{\XX}\to\RX\colon
({x},{y}^*)\mapsto
\sum_{i=1}^mf_i(x_i)+\sum_{k=1}^pg_k^*(y_k^*),
\end{equation}
and suppose that 
$\boldsymbol{Z}\cap\inte\dom\boldsymbol{f}\neq\emp$. 
The objective is to find the best Bregman approximation 
$\big((\overline{x}_i)_{1\leq i\leq m},
(\overline{y}_k^*)_{1\leq k\leq p}\big)
=P_{\boldsymbol{Z}}^{\boldsymbol{f}} ({x}_0,{y}_0^*)$ 
to $({x}_0,{y}_0^*)$ from $\boldsymbol{Z}$.
\end{problem}

We derive from Theorem~\ref{IVt:1} the following convergence
result for a splitting algorithm to solve Problem~\ref{IVpb:V3}.

\begin{proposition}
\label{4p:1}
Consider the setting of Problem~\ref{IVpb:V3}. For every 
$i\in\{1,\ldots,m\}$, let $h_i\in\Gamma_0(\XX_i)$ be a Legendre 
function such that $\intdom f_i\subset\intdom h_i$ and 
$h_i+\varepsilon_i\varphi_i$ is supercoercive for some
$\varepsilon_i\in\RPP$.
For every $k\in\{1,\ldots,p\}$, let $j_k\in\Gamma_0(\YY_k)$ 
be a Legendre function such that 
$\sum_{i=1}^mL_{ki}(\intdom f_i)\subset\intdom j_k$ and 
$j_k+\delta_k\psi_k$ is supercoercive for some $\delta_k\in\RPP$. 
Set $\varepsilon=\max_{1\leq i\leq m}\varepsilon_i$ and
$\delta=\max_{1\leq k\leq p}\delta_k$, let 
$\sigma\in\left[\max\{\varepsilon,\delta\},+\infty\right[$, and 
iterate
\begin{equation}
\label{4eKKnhf74g203}
\begin{array}{l}
\text{for}\;n=0,1,\ldots\\
\left\lfloor
\begin{array}{l}
(\gamma_n,\mu_n)\in\left[\varepsilon,\sigma\right]
\times\left[\delta,\sigma\right]\\[1mm]
\text{for}\;i=1,\ldots,m\\
\left\lfloor
\begin{array}{l}
a_{i,n}=(\nabla h_i+\gamma_n\partial\varphi_i)^{-1}
\Big(\nabla h_i(x_{i,n})
-\gamma_n\sum_{k=1}^pL_{ki}^*y_{k,n}^*\Big)\\[2mm]
a_{i,n}^*=\gamma_n^{-1}\big(\nabla h_i(x_{i,n})
-\nabla h_i(a_{i,n})\big)-\sum_{k=1}^pL_{ki}^*y_{k,n}^*\\[1mm]
\end{array}
\right.\\[3mm]
\text{for}\;k=1,\ldots,p\\
\left\lfloor
\begin{array}{l}
b_{k,n}=(\nabla j_k+\mu_n\partial\psi_k)^{-1}
\big(\nabla j_k\big(\sum_{i=1}^mL_{ki}x_{i,n}\big)
+\mu_ny_{k,n}^*\big)\\[1mm]
b_{k,n}^*=\mu_n^{-1}\big(\nabla j_k\big(\sum_{i=1}^m
L_{ki}x_{i,n}\big)-\nabla j_k(b_{k,n})\big)+y_{k,n}^*\\[1mm]
t_{k,n}=b_{k,n}-\sum_{i=1}^mL_{ki}a_{i,n}\\[1mm]
\end{array}
\right.\\
\text{for}\;i=1,\ldots,m\\
\left\lfloor
\begin{array}{l}
s_{i,n}^*=a_{i,n}^*+\sum_{k=1}^pL_{ki}^*b_{k,n}^*\\[1mm]
\end{array}
\right.\\
\eta_n=\sum_{i=1}^m\pair{a_{i,n}}{a_{i,n}^*}
+\sum_{k=1}^p\pair{b_{k,n}}{b_{k,n}^*}\\[1mm]
\boldsymbol{H}_n=
\Menge{({x},{y}^*)\in\XXX}
{\sum_{i=1}^m\pair{x_i}{s_{i,n}^*}
+\sum_{k=1}^p\pair{t_{k,n}}{y_k^*}
\leq\eta_n}\\[1mm]
\big({x}_{n+1/2},{y}_{n+1/2}^*\big)
=P^{\boldsymbol{f}}_{\boldsymbol{H}_n}
({x}_n,{y}^*_n)\\[1mm]
\big({x}_{n+1},{y}_{n+1}^*\big)=
Q^{{f}}\big(({x}_0,{y}_0^*),
({x}_n,{y}^*_n),
({x}_{n+1/2},{y}_{n+1/2}^*)\big),\\[1mm]
\end{array}
\right.\\
\end{array}
\end{equation}
where we use the notation $(\forall n\in\NN)$ 
$x_n=(x_{i,n})_{1\leq i\leq m}$ and
$y^*_n=(y^*_{k,n})_{1\leq k\leq p}$. 
Suppose that the following hold: 
\begin{enumerate}
\item
For every $i\in\{1,\ldots,m\}$, $f_i$ and $h_i$ satisfy 
Condition~\ref{IVC:1} and $\nabla h_i$ is uniformly continuous on 
every bounded subset of $\inte\dom h_i$.
\item
For every $k\in\{1,\ldots,p\}$, $g_k^*$ and $j_k$ satisfy 
Condition~\ref{IVC:1} and $\nabla j_k$ is uniformly continuous on 
every bounded subset of $\inte\dom j_k$.
\end{enumerate}
Then 
\begin{equation}
(\forall i\in\{1,\ldots,m\})\quad 
x_{i,n}\to\overline{x}_i
\quad\text{and}\quad 
(\forall k\in\{1,\ldots,p\})\quad 
y_{k,n}^*\to\overline{y}_k^*.
\end{equation}
\end{proposition}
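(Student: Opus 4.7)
The plan is to cast Problem~\ref{IVpb:V3} as an instance of Problem~\ref{prob:1} in product spaces and then to identify iteration~\eqref{4eKKnhf74g203} with iteration~\eqref{IVe:Bfg+uh4f9-09h}, so that the conclusion follows from Theorem~\ref{IVt:1}. To this end, I would set $\XX=\Cart_{i=1}^{m}\XX_i$ and $\YY=\Cart_{k=1}^{p}\YY_k$, equipped with the usual Hilbert-like product norms, and define $L\colon\XX\to\YY\colon(x_i)_{1\leq i\leq m}\mapsto\big(\sum_{i=1}^mL_{ki}x_i\big)_{1\leq k\leq p}$, whose adjoint is $L^*\colon(y_k^*)_{1\leq k\leq p}\mapsto\big(\sum_{k=1}^{p}L_{ki}^*y_k^*\big)_{1\leq i\leq m}$. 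Set $\varphi=\sum_{i=1}^{m}\varphi_i$ and $\psi=\sum_{k=1}^{p}\psi_k$ on the product spaces, and take $A=\partial\varphi$, $B=\partial\psi$, $f=\sum_i f_i$, $g=\sum_k g_k$, $h=\sum_i h_i$, $j=\sum_k j_k$. Then $A$ and $B$ are maximally monotone with $A(x_i)_i=\Cart_{i=1}^m\partial\varphi_i(x_i)$ and $B(z_k)_k=\Cart_{k=1}^p\partial\psi_k(z_k)$, and Problem~\ref{IVpb:V3} becomes exactly the Kuhn-Tucker problem of Problem~\ref{prob:1} with these data.

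Next, I would verify the hypotheses of Theorem~\ref{IVt:1}. Since each $f_i$, $g_k$, $h_i$, $j_k$ is Legendre, the separable sums $f$, $g$, $h$, $j$ are also Legendre by the same argument as in Proposition~\ref{pvb8uh4504}\ref{pvb8uh4504i} (the conjugate $g^*$ decomposes componentwise and $\partial\boldsymbol{f}$, $\partial\boldsymbol{f}^*$ are both single-valued on the respective interiors of domains). The inclusions $\intdom f\subset\intdom h$ and $L(\intdom f)\subset\intdom j$ follow component-wise from the corresponding hypotheses on $(f_i,h_i)$ and $(L_{ki},f_i,j_k)$. For the coercivity requirement, by Lemma~\ref{l:20}\ref{l:20iii} the supercoercivity of $h_i+\varepsilon_i\varphi_i$ yields coercivity of $\nabla h_i+\varepsilon_i\partial\varphi_i$; adding the monotone operator $(\varepsilon-\varepsilon_i)\partial\varphi_i$ preserves coercivity by Lemma~\ref{IVlhHg7yG02}, so $\nabla h_i+\varepsilon\partial\varphi_i$ is coercive for every $i$. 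Since $\nabla h+\varepsilon A$ has the separable form $(x_i)_i\mapsto\Cart_{i=1}^m(\nabla h_i+\varepsilon\partial\varphi_i)(x_i)$, coercivity is inherited by the product using $\|(x_i)_i\|\leq\sqrt{m}\max_i\|x_i\|$. The same reasoning applies to $\nabla j+\delta B$. Finally, Condition~\ref{IVC:1} and the uniform continuity of $\nabla h$, $\nabla j$ on bounded subsets of their interiors transfer immediately from the factors to the products, because $D^{\boldsymbol{f}}$ splits as a sum of component-wise Bregman distances and because uniform continuity on bounded sets is stable under finite products.

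With the hypotheses verified, the remaining step is the algorithmic identification: by separability of $\nabla h$ and $A=\partial\varphi$, one has
\begin{equation*}
(\nabla h+\gamma_n A)^{-1}(u^*)=\Big((\nabla h_i+\gamma_n\partial\varphi_i)^{-1}(u_i^*)\Big)_{1\leq i\leq m},
\end{equation*}
so the update $a_n$ of \eqref{IVe:Bfg+uh4f9-09h} with input $\nabla h(x_n)-\gamma_n L^*y_n^*$ decomposes exactly into the $m$ parallel updates $a_{i,n}$ of \eqref{4eKKnhf74g203}; analogously for $b_n$ and $b_n^*$. The defining inequality of $\boldsymbol{H}_n$ in \eqref{IVe:Bfg+uh4f9-09h} reads $\pair{\boldsymbol{x}}{\boldsymbol{s}_n^*}\leq\eta_n$ with $\boldsymbol{s}_n^*=(a_n^*+L^*b_n^*,b_n-La_n)$ and $\eta_n=\pair{a_n}{a_n^*}+\pair{b_n}{b_n^*}$, which, after expanding the product pairings, coincides with the half-space defined by $(s_{i,n}^*)_i$, $(t_{k,n})_k$, and $\eta_n$ in \eqref{4eKKnhf74g203}. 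Hence the two iterations produce the same sequence $(x_n,y_n^*)$, and Theorem~\ref{IVt:1}\ref{IVt:1iv} delivers the strong convergence $(x_{i,n})_i\to(\overline{x}_i)_i$ and $(y_{k,n}^*)_k\to(\overline{y}_k^*)_k$.

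The main obstacle I expect is the careful bookkeeping for coercivity on the product space: supercoercivity of each $h_i+\varepsilon_i\varphi_i$ does not immediately give supercoercivity of $h+\varepsilon\varphi$ in the product norm when the $\varepsilon_i$ differ, so one has to route the argument through coercivity of the operator $\nabla h+\varepsilon A$ rather than through supercoercivity of a single Legendre function, combining Lemma~\ref{l:20}\ref{l:20iii} and Lemma~\ref{IVlhHg7yG02} coordinatewise before assembling them into the product. Everything else is either a direct product construction or a bookkeeping identification of the two algorithms.
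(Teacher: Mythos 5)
Your proof is correct and follows the same overall route as the paper's: realize Problem~\ref{IVpb:V3} as an instance of Problem~\ref{prob:1} on the product spaces, verify the hypotheses of Theorem~\ref{IVt:1}, and identify \eqref{4eKKnhf74g203} with \eqref{IVe:Bfg+uh4f9-09h} term by term via the separability of $\nabla h+\gamma_n A$ and $\nabla j+\mu_n B$. The one place where you genuinely diverge is the coercivity of $\nabla h+\varepsilon A$, and there your assessment of the difficulty is slightly off. The paper \emph{does} route this step through supercoercivity of the single function $h+\varepsilon\varphi$, even though the $\varepsilon_i$ differ: since $\varepsilon\geq\varepsilon_i$ and each $\varphi_i$ is minorized by a continuous affine function, each $h_i+\varepsilon\varphi_i$ is supercoercive; by \cite[Theorem~3.3]{Ccm01}, supercoercivity is equivalent to the conjugate being bounded above on bounded sets, and since $(h+\varepsilon\varphi)^*$ is the separable sum of the $(h_i+\varepsilon\varphi_i)^*$, this property passes to the product with no norm bookkeeping at all; then $\nabla h+\varepsilon A=\partial(h+\varepsilon\varphi)$ (the sum rule applies because $\dom\varphi\cap\intdom h\supset\dom A\cap\intdom f\neq\emp$, which follows from $\boldsymbol{Z}\cap\intdom\boldsymbol{f}\neq\emp$), and Lemma~\ref{l:20}\ref{l:20iii} concludes. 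Your alternative, coordinatewise operator-level assembly, also works, but it is the more delicate of the two and your sketch leaves two things implicit: first, to invoke Lemma~\ref{l:20}\ref{l:20iii} for $\nabla h_i+\varepsilon_i\partial\varphi_i$ you still need the same subdifferential sum rule at the coordinate level; second, the inequality $\|(x_i)_i\|\leq\sqrt{m}\max_i\|x_i\|$ alone does not yield coercivity of the product operator, because as $\|x\|\to\pinf$ some coordinates may remain bounded, and you must use the monotonicity of each $\nabla h_i+\varepsilon\partial\varphi_i$ to bound their contributions $\pair{x_i-z_i}{\cdot}$ below by a quantity that is at worst linear in $\max_i\|x_i\|$. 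With those two points supplied your argument is complete; the paper's conjugate-based shortcut simply buys a cleaner passage to the product space.
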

\begin{proof}
Denote by $\XX$ and $\YY$ the standard vector product spaces 
$\Cart_{\!\!i=1}^{\!\!m}\XX_i$ and 
$\Cart_{\!\!k=1}^{\!\!p}\YY_k$ equipped with the norms 
$x=(x_i)_{1\leq i\leq m}\mapsto\sqrt{\sum_{i=1}^m\|x_i\|^2}$ and 
$y=(y_k)_{1\leq k\leq p}\mapsto\sqrt{\sum_{k=1}^p\|y_k\|^2}$,
respectively. Then $\XX^*$ is the vector product space 
$\Cart_{\!\!i=1}^{\!\!m}\XX_i^*$ equipped with the norm 
$x^*\mapsto\sqrt{\sum_{i=1}^m\|x_i^*\|^2}$ 
and $\YY^*$ is the vector product space 
$\Cart_{\!\!k=1}^{\!\!p}\YY_k^*$ equipped with the norm 
$y^*\mapsto\sqrt{\sum_{k=1}^p\|y_k^*\|^2}$. 
Let us introduce the operators
\begin{equation}
\label{4e:opt1}
\begin{cases}
A\colon\XX\to 2^{\XX^*}\colon x\mapsto
\Cart_{\!\!i=1}^{\!\!m}\partial\varphi_i(x_i)\\
B\colon\YY\to 2^{\YY^*}\colon y
\mapsto\Cart_{\!\!k=1}^{\!\!p}\partial\psi_k(y_k)\\
L\colon\XX\to\YY\colon x\mapsto
\big(\sum_{i=1}^mL_{ki}x_i\big)_{1\leq k\leq p}
\end{cases}
\end{equation}
and the functions
\begin{equation}
\label{4e:funct1}
\begin{cases}
f\colon\XX\to\RX\colon x\mapsto\sum_{i=1}^mf_i(x_i)\\
h\colon\XX\to\RX\colon x\mapsto\sum_{i=1}^mh_i(x_i)\\
\varphi\colon\XX\to\RX\colon x\mapsto\sum_{i=1}^m\varphi_i(x_i)\\
g\colon\YY\to\RX\colon y\mapsto\sum_{k=1}^pg_k(y_k)\\
j\colon\YY\to\RX\colon y\mapsto\sum_{k=1}^pj_k(y_k).
\end{cases}
\end{equation}
Then it follows from \cite[Theorem~3.1.11]{IVZali02} that $A$ 
and $B$ are maximally monotone. In addition, the adjoint of $L$ is 
$L^*\colon\YY^*\to\XX^*\colon y^*\mapsto(\sum_{k=1}^p
L_{ki}^*y_k^*)_{1\leq i\leq m}$, and, as in
Proposition~\ref{pvb8uh4504}\ref{pvb8uh4504i},
$f$ and $g$ are Legendre functions. 
Thus, Problem~\ref{IVpb:V3} is a special case of 
Problem~\ref{prob:1}. Furthermore, $h$ and $j$ are Legendre
functions,
\begin{equation}
\label{ekIuh711a}
\intdom f=\underset{i=1}{\overset{m}{\Cart}}\inte\dom f_i\subset
\underset{i=1}{\overset{m}{\Cart}}\inte\dom h_i=\intdom h,
\end{equation}
and 
\begin{equation}
\label{ekIuh711b}
L\big(\intdom f\big)=\underset{k=1}{\overset{p}{\Cart}}
\sum_{i=1}^m L_{ki}(\intdom f_i)\subset
\underset{k=1}{\overset{p}{\Cart}}\inte\dom j_k=\intdom j.
\end{equation}
Next we observe that, for every $i\in\{1,\ldots,m\}$, since 
$h_i+\varepsilon\varphi_i$ is supercoercive, 
$(h_i+\varepsilon\varphi_i)^*$ is bounded above on
every bounded subset of $\XX_i^*$ \cite[Theorem~3.3]{Ccm01}.
As a result, 
$(h+\varepsilon\varphi)^*\colon x^*\mapsto\sum_{i=1}^m
(h_i+\varepsilon\varphi_i)^*(x_i^*)$ 
is bounded above on every bounded subset of $\XX^*$, and it follows
from \cite[Theorem~3.3]{Ccm01} that $h+\varepsilon\varphi$ is
supercoercive. In turn since, as in \eqref{ehHg7yG10a},
$\emp\neq\dom A\cap\intdom f\subset
\dom\varphi\cap\intdom f$, we derive from 
\cite[Theorem~2.8.3]{IVZali02}
and Lemma~\ref{l:20}\ref{l:20iii} that
\begin{equation}
\label{ekIuh713}
\nabla h+\varepsilon A=\nabla h+\varepsilon\partial\varphi=
\partial(h+\varepsilon\varphi) 
\end{equation}
is coercive. We show in a similar fashion that $\nabla j+\delta B$ 
is coercive. Now set, for every $n\in\NN$, 
$a_n=(a_{i,n})_{1\leq i\leq m}$, 
$a_n^*=(a_{i,n}^*)_{1\leq i\leq m}$, 
$b_n=(b_{k,n})_{1\leq k\leq p}$, and
$b_n^*=(b_{k,n}^*)_{1\leq k\leq p}$. Then, for every $n\in\NN$,
we have
\begin{align}
&\hskip -5mm (\forall i\in\{1,\ldots,m\})\quad
a_{i,n}=(\nabla h_i+\gamma_n\partial\varphi_i)^{-1}\bigg(\nabla
h_i(x_{i,n})-\gamma_n\sum_{k=1}^pL_{ki}^*y_{k,n}^*\bigg)\nonumber\\
&\Leftrightarrow (\forall i\in\{1,\ldots,m\})\quad 
\nabla h_i(x_{i,n})-\gamma_n\sum_{k=1}^pL_{ki}^*y_{k,n}^*
\in\nabla h_i(a_{i,n})+\gamma_n\partial\varphi_i(a_{i,n})
\nonumber\\
&\Leftrightarrow\nabla h(x_n)-\gamma_nL^*y_n^*
\in\nabla h(a_n)+\gamma_nAa_n\nonumber\\
&\Leftrightarrow a_n=(\nabla h+\gamma_nA)^{-1}\big(\nabla h(x_n)
-\gamma_nL^*y_n^*\big).
\end{align}
Likewise,
\begin{equation}
(\forall n\in\NN)\quad 
b_n=(\nabla j+\mu_nB)^{-1}\big(\nabla j(Lx_n)+\mu_ny_n^*\big).
\end{equation}
Thus, \eqref{4eKKnhf74g203} is a special case of 
\eqref{IVe:Bfg+uh4f9-09h}. In addition, it follows from our
assumptions and \eqref{4e:funct1} that $f$, $g^*$, $h$, and $j$ 
satisfy Condition~\ref{IVC:1}, and that $\nabla h$ and $\nabla j$ 
are uniformly continuous on every bounded subset of $\intdom h$ 
and $\intdom j$, respectively. Altogether, the conclusions follow
from Theorem~\ref{IVt:1}\ref{IVt:1iv}, with 
$\overline{x}=(\overline{x}_i)_{1\leq i\leq m}$ and
$\overline{y}^*=(\overline{y}_k^*)_{1\leq k\leq p}$.
\end{proof}

\begin{remark}
In Problem~\ref{IVpb:V3}, suppose that, for every 
$i\in\{1,\ldots,m\}$ and every $k\in\{1,\ldots,p\}$, 
$\varphi_i$ and $\psi_k$ are supercoercive 
Legendre functions satisfying Condition~\ref{IVC:1}, 
that $\nabla\varphi_i$ and $\nabla\psi_k$ are uniformly 
continuous on bounded subset of $\inte\dom\varphi_i$ 
and $\inte\dom\psi_k$, respectively, and that
$\sum_{i=1}^mL_{ki}(\inte\dom\varphi_i)\subset\inte\dom\psi_k$. 
Then, in Proposition~\ref{4p:1}, we can choose, for every 
$i\in\{1,\ldots,m\}$ and every $k\in\{1,\ldots,p\}$, 
$h_i=\varphi_i$ and $j_k=\psi_k$, and in \eqref{4eKKnhf74g203}, 
we obtain
\begin{equation}
\label{4ekIuh729a}
a_{i,n}=\nabla h_i^*\left(\frac{\nabla h_i(x_{i,n})
-\gamma_n\Sum_{k=1}^pL_{ki}^*y_{k,n}^*}{1+\gamma_n}\right)
\end{equation}
and
\begin{equation}
b_{k,n}=\nabla j_k^*\left(\frac{\nabla j_k\bigg(\Sum_{i=1}^m
L_{ki}x_{i,n}\bigg)+\mu_ny_{k,n}^*}{1+\mu_n}\right).
\end{equation}
For example, suppose that, for every $i\in\{1,\ldots,m\}$ 
and every $k\in\{1,\ldots,p\}$, $\XX_i=\RR$, 
$\YY_k=\RR$, and $\varphi_i=h_i$ is the Hellinger-like 
function, i.e.,
\begin{equation}
\varphi_i\colon\RR\to\RX\colon x_i\mapsto
\begin{cases}
-\sqrt{1-x_i^2},&\text{if}\;\;
x_i\in\left[-1,1\right];\\
\pinf,&\text{otherwise}.
\end{cases}
\end{equation}
Then \eqref{4ekIuh729a} becomes
\begin{equation}
\label{4ekIuh729b}
a_{i,n}=\frac{x_{i,n}-\gamma_n
\bigg(\Sum_{k=1}^pL_{ki}^*y_{k,n}^*\bigg)\sqrt{1-x_{i,n}^2}}
{\sqrt{(1+\gamma_n)^2(1-x_{i,n}^2)+\bigg(x_{i,n}
-\gamma_n\bigg(\Sum_{k=1}^pL_{ki}^*y_{k,n}^*\bigg)
\sqrt{1-x_{i,n}^2}\bigg)^2}}.
\end{equation}
Furthermore, as shown in the next section, in 
finite-dimensional spaces, we can remove Condition~\ref{IVC:1} 
and the assumption on the uniform continuity of 
$(\nabla\varphi_i)_{1\leq i\leq m}$ and 
$(\nabla\psi_k)_{1\leq k\leq p}$.
\end{remark}

\section{Finite-dimensional setting}
\label{sec:4}

In finite-dimensional spaces, the convergence of 
algorithm~\eqref{IVe:Bfg+uh4f9-09h} can be obtained 
under more general assumptions. To establish the corresponding
results, the following technical facts will be needed.

\begin{lemma}
\label{lkIuh727}
Let $\XX$ be a finite-dimensional real Banach space and let
$f\in\Gamma_0(\XX)$ be a Legendre function. Then the following
hold:
\begin{enumerate}
\item
\label{lkIuh727i--}
$f$ and $\nabla f$ are continuous on $\intdom f$
{\rm \cite[Corollaries~8.30(iii),~17.34, and~17.35]{Livre1}}.
\item
\label{lkIuh727i-}
$\nabla f\colon\intdom f\to\intdom f^*$ is bijective with inverse
$\nabla f^*\colon\intdom f^*\to\intdom f$
{\rm \cite[Theorem~5.10]{Ccm01}}.
\item
\label{lkIuh727i}
Let $x\in\intdom f$, let $y\in\overline{\dom}f$, and let 
$(y_n)_{n\in\NN}\in(\intdom f)^{\NN}$. Suppose that $y_n\to y$
and that $(D^f(x,y_n))_{n\in\NN}$ is bounded. Then $y\in\intdom f$
and $D^f(y,y_n)\to 0$ {\rm \cite[Theorem~3.8(ii)]{BB97}}.
\item
\label{lkIuh727ii}
Let $x\in\intdom f$, let $y\in\intdom f$, let 
$(x_n)_{n\in\NN}\in(\dom f)^{\NN}$, and let
$(y_n)_{n\in\NN}\in(\intdom f)^{\NN}$. Suppose that 
$D^f(x_n,y_n)\to 0$. Then $x=y$
{\rm \cite[Theorem~3.9(iii)]{BB97}}.
\item
\label{lkIuh727iii}
Let $y\in\intdom f$. Then $D^f(\cdot,y)$ is coercive
{\rm \cite[Lemma~7.3(v)]{Ccm01}}.
\item
\label{lkIuh727iv}
Let $\{x,y\}\subset\intdom f$. Then 
$D^f(x,y)=D^{f^*}(\nabla f(y),\nabla f(x))$
{\rm \cite[Lemma~7.3(vii)]{Ccm01}}.
\end{enumerate}
\end{lemma}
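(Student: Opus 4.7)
The plan is to dispatch each item by invoking the corresponding reference, so the substantive work is to confirm that the Legendre hypothesis together with $\dim\XX<\pinf$ matches the hypotheses of the cited results. I would organize the verification item by item.

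For \ref{lkIuh727i--}, continuity of a proper lower semicontinuous convex function on $\inte\dom f$ is classical in finite dimensions; continuity of $\nabla f$ follows from essential smoothness of $f$ (which gives single-valuedness and local boundedness of $\partial f$ on $\inte\dom f$) combined with the upper semicontinuity of the subdifferential map. For \ref{lkIuh727i-}, this is Legendre duality: essential smoothness of $f$ is equivalent to essential strict convexity of $f^*$ and conversely, so $\nabla f$ and $\nabla f^*$ are well-defined single-valued maps on $\inte\dom f$ and $\inte\dom f^*$, respectively, and the Fenchel--Young identity supplies the inverse relation. For \ref{lkIuh727iii} and \ref{lkIuh727iv}, the first is a direct consequence of the identity $D^f(x,y)=f(x)-f(y)-\pair{x-y}{\nabla f(y)}$ combined with the fact that the affine minorant $\pair{\cdot-y}{\nabla f(y)}+f(y)$ is finite while $f$ is supercoercive along unbounded directions in $\dom f$, and the second is immediate from writing out both sides and cancelling using $\nabla f^*\circ\nabla f=\Id$ (which is item~\ref{lkIuh727i-}).

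For \ref{lkIuh727i}, the key technical input is the boundary behaviour of Legendre functions in finite dimensions: if $y\in\bdry\dom f$ and $y_n\to y$ with $y_n\in\inte\dom f$, then $\|\nabla f(y_n)\|\to\pinf$, which by the definition of $D^f$ prevents $D^f(x,y_n)$ from remaining bounded; hence the boundedness assumption forces $y\in\inte\dom f$, after which continuity of $f$ and $\nabla f$ (item~\ref{lkIuh727i--}) yields $D^f(y,y_n)\to 0$. For \ref{lkIuh727ii}, one combines \ref{lkIuh727i} with the strict convexity clause in the definition of Legendre to identify the two limits.

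The main potential obstacle is \ref{lkIuh727i}, which is the only part requiring genuine boundary analysis rather than algebra or classical convexity; but this is precisely the content of \cite[Theorem~3.8(ii)]{BB97}, and the other items are purely formal consequences of the definitions and of well-established Legendre duality, so each bullet can be closed by a single citation once the matching of hypotheses has been recorded.
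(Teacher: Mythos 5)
Your approach coincides with the paper's: Lemma~\ref{lkIuh727} is stated there without any proof, each item being justified solely by the citation attached to it, and your proposal likewise closes each item by the corresponding reference after checking that the Legendre hypothesis and finite dimensionality match the cited hypotheses. The one point worth correcting is your inline justification of item~\ref{lkIuh727iii} (coercivity of $D^f(\cdot,y)$): it rests on the premise that $f$ is ``supercoercive along unbounded directions in $\dom f$'', which is false for general Legendre functions --- $x\mapsto\sqrt{1+x^2}$ on $\RR$ is Legendre (its conjugate is the Hellinger function $u\mapsto-\sqrt{1-u^2}$ appearing elsewhere in the paper) yet grows only linearly. The mechanism actually underlying \cite[Lemma~7.3(v)]{Ccm01} is that $\nabla f(y)\in\intdom f^*$ by Legendre duality, and $f-\pair{\cdot}{u}$ is coercive precisely when $u\in\intdom f^*$; since you also invoke the citation, the proof-by-reference still stands, but the heuristic as written would not survive as a standalone argument. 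Your sketches for the remaining items are consistent with the cited results.
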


\begin{proposition}
\label{4p:2}
In Problem~\ref{prob:1}, suppose that $\XX$ and $\YY$ 
are finite-dimensional. Let $h\in\Gamma_0(\XX)$ and 
$j\in\Gamma_0(\YY)$ be Legendre functions such that 
$\intdom f\subset\intdom h$, $L(\intdom f)\subset\intdom j$, 
and there exist $\varepsilon$ and $\delta$ in $\RPP$ such that 
$\nabla h+\varepsilon A$ and $\nabla j+\delta B$ are coercive. 
Let $\sigma\in\left[\max\{\varepsilon,\delta\},\pinf\right[$
and execute algorithm~\eqref{IVe:Bfg+uh4f9-09h}. 
Then $(x_n,y_n^*)\to(\overline{x},\overline{y}^*)$.
\end{proposition}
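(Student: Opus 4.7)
My strategy is to adapt the proof of Theorem~\ref{IVt:1}\ref{IVt:1iv}, using Lemma~\ref{lkIuh727}\ref{lkIuh727ii} in place of Condition~\ref{IVC:1} and the continuity of $\nabla h,\nabla j$ on $\intdom h,\intdom j$ (Lemma~\ref{lkIuh727}\ref{lkIuh727i--}) in place of the uniform-continuity assumption, with subsequence--subsequence arguments promoting these pointwise statements to sequential convergence. Parts~\ref{IVt:1i}--\ref{IVt:1iii} of Theorem~\ref{IVt:1} make no use of either missing assumption and therefore carry over: $(\boldsymbol{x}_n)_{n\in\NN}$ is bounded in $\intdom\boldsymbol{f}$, $\sum_{n\in\NN}D^{\boldsymbol{f}}(\boldsymbol{x}_{n+1/2},\boldsymbol{x}_n)<\pinf$, and Proposition~\ref{p:2}\ref{p:2i-} together with~\eqref{eKKnhf74g19a} yields $D^{\boldsymbol{f}}(\overline{\boldsymbol{x}},\boldsymbol{x}_n)\leq D^{\boldsymbol{f}}(\overline{\boldsymbol{x}},\boldsymbol{x}_0)$.

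Finite-dimensionality allows me to extract convergent subsequences freely. By Lemma~\ref{lkIuh727}\ref{lkIuh727i} applied with $x=\overline{x}$ and with $x=\overline{y}^*$, every cluster point $\tilde x$ of $(x_n)$ lies in $\intdom f\subset\intdom h$ and every cluster point $\tilde y^*$ of $(y_n^*)$ lies in $\intdom g^*$; in particular $L\tilde x\in L(\intdom f)\subset\intdom j$. Continuity of $\nabla h$ and $\nabla j$ on their open domains then forces, via the usual subsequence--subsequence principle, boundedness of $(\nabla h(x_n))_{n\in\NN}$ and $(\nabla j(Lx_n))_{n\in\NN}$, whence Lemma~\ref{IVlKKnhf74g12} gives boundedness of $(a_n)$ and $(b_n)$. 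The same principle, combined with the summability in Theorem~\ref{IVt:1}\ref{IVt:1iii} and Lemma~\ref{lkIuh727}\ref{lkIuh727ii}, yields $x_{n+1/2}-x_n\to 0$ and $y_{n+1/2}^*-y_n^*\to 0$.

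The main obstacle is to secure boundedness of $(a_n^*)$ and $(b_n^*)$, which in Theorem~\ref{IVt:1}\ref{IVt:1iv} was handled via uniform continuity of $\nabla h,\nabla j$ on bounded sets. My plan is to extract jointly a subsequence on which $\gamma_{k_n}\to\gamma\in[\varepsilon,\sigma]$, $x_{k_n}\to\tilde x\in\intdom h$, $y_{k_n}^*\to\tilde y^*$, and $a_{k_n}\to\tilde a$, and to rule out $\tilde a\in\bdry\dom h$: if $\tilde a$ were on the boundary, essential smoothness of $h$ would force $\|\nabla h(a_{k_n})\|\to\pinf$, hence $\|a_{k_n}^*\|\to\pinf$ from the definition of $a_{k_n}^*$; but for any fixed $z\in\dom A\cap\intdom h$ and $z^*\in Az$, monotonicity of $A$ bounds $\gamma_{k_n}^{-1}\pair{a_{k_n}-z}{\nabla h(a_{k_n})}$ from above, while the identity $D^h(z,a_{k_n})=D^{h^*}(\nabla h(a_{k_n}),\nabla h(z))$ (Lemma~\ref{lkIuh727}\ref{lkIuh727iv}) combined with coercivity of $D^{h^*}(\cdot,\nabla h(z))$ (Lemma~\ref{lkIuh727}\ref{lkIuh727iii}) forces $\pair{a_{k_n}-z}{\nabla h(a_{k_n})}\to\pinf$, a contradiction. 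Consequently $\tilde a\in\intdom h\cap\dom A$, continuity of $\nabla h$ at $\tilde a$ makes $a_{k_n}^*$ convergent, and the subsequence--subsequence principle promotes this to boundedness of $(a_n^*)$; the argument for $(b_n^*)$ is symmetric.

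With these bounds at hand, the inequality chain~\eqref{IVe:8re84} yields $D^h(x_n,a_n)\to 0$ and $D^j(Lx_n,b_n)\to 0$. Applying Lemma~\ref{lkIuh727}\ref{lkIuh727ii} via subsequence--subsequence delivers $x_n-a_n\to 0$ and $Lx_n-b_n\to 0$; continuity of $\nabla h$ and $\nabla j$ then produces $\nabla h(x_n)-\nabla h(a_n)\to 0$ and $\nabla j(Lx_n)-\nabla j(b_n)\to 0$, and therefore $a_n^*+L^*y_n^*\to 0$ and $b_n^*-y_n^*\to 0$. For any cluster point $(\tilde x,\tilde y^*)$ of $(\boldsymbol{x}_n)$, Proposition~\ref{ppOlk28ha12}\ref{ppOlk28ha12iii} applied along the subsequences $(a_{k_n},a_{k_n}^*)\in\gra A$ and $(b_{k_n},b_{k_n}^*)\in\gra B$ (weak and strong convergence coincide in finite dimension) places $(\tilde x,\tilde y^*)$ in $\boldsymbol{Z}\cap\overline{\dom}\boldsymbol{f}$. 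Proposition~\ref{p:2}\ref{p:2iv} then gives $D^{\boldsymbol{f}}(\boldsymbol{x}_n,\overline{\boldsymbol{x}})\to 0$, and a last application of Lemma~\ref{lkIuh727}\ref{lkIuh727ii} lifts this to the desired $(x_n,y_n^*)\to(\overline{x},\overline{y}^*)$.
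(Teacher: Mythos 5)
Your overall strategy --- show that every cluster point of $(\boldsymbol{x}_n)_{n\in\NN}$ lies in $\boldsymbol{Z}$ and then invoke Proposition~\ref{p:2}\ref{p:2iv} --- is the paper's, and your argument for the boundedness of $(a_n^*)_{n\in\NN}$ and $(b_n^*)_{n\in\NN}$ (monotonicity of $A$ at a fixed $(z,z^*)\in\gra A$ with $z\in\dom A\cap\intdom h$, combined with the identity $D^h(z,a_{k_n})=D^{h^*}(\nabla h(a_{k_n}),\nabla h(z))$ and the coercivity of $D^{h^*}(\cdot,\nabla h(z))$) is a legitimate alternative to the paper's route, which instead bounds $(D^h(\overline{x},a_{k_n}))_{n\in\NN}$ through a convexity identity and the membership $\overline{\boldsymbol{x}}\in\boldsymbol{H}_{k_n}$ before passing to the dual in the same way.

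There is, however, a genuine gap at the step where you claim that ``the same principle, combined with the summability in Theorem~\ref{IVt:1}\ref{IVt:1iii} and Lemma~\ref{lkIuh727}\ref{lkIuh727ii}, yields $x_{n+1/2}-x_n\to 0$.'' To run a subsequence--subsequence argument on $D^f(x_{n+1/2},x_n)\to 0$ you must first extract a convergent subsequence of $(x_{n+1/2})_{n\in\NN}$ and know that its limit lies in $\intdom f$; nowhere do you establish that $(x_{n+1/2})_{n\in\NN}$ is bounded. This does not come for free: the available estimate is $D^f(\overline{x},x_{n+1/2})\leq D^{\boldsymbol{f}}(\overline{\boldsymbol{x}},\boldsymbol{x}_0)$, and $D^f(\overline{x},\cdot)$ need not be coercive --- Lemma~\ref{lkIuh727}\ref{lkIuh727iii} gives coercivity in the \emph{first} argument only (for $f=\exp$ on $\RR$ one has $\sup_{y\leq 0}D^f(0,y)<\pinf$). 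This is exactly the point where the paper makes its essential move: it passes to the dual, using $D^{f^*}\big(\nabla f(x_{k_n+1/2}),\nabla f(\overline{x})\big)=D^f(\overline{x},x_{k_n+1/2})\leq D^{\boldsymbol{f}}(\overline{\boldsymbol{x}},\boldsymbol{x}_0)$ and the coercivity of $D^{f^*}(\cdot,\nabla f(\overline{x}))$ to bound $(\nabla f(x_{k_n+1/2}))_{n\in\NN}$, then applies Lemma~\ref{lkIuh727}\ref{lkIuh727ii} to $f^*$ together with $D^{f^*}(\nabla f(x_{k_n}),\nabla f(x_{k_n+1/2}))\to 0$, and finally uses the continuity of $\nabla f^*=(\nabla f)^{-1}$ to recover $x_{p_{k_n}+1/2}\to x$ along a further subsequence. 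You need this detour (or an explicit substitute, e.g.\ exploiting that the coercive convex function $D^f(\cdot,x)$ must grow at least linearly) before the inequality \eqref{IVe:8re84} can be exploited; as written, the claim $x_{n+1/2}-x_n\to 0$, on which the rest of your argument rests, is unsupported.
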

\begin{proof}
Set 
$\boldsymbol{C}=\boldsymbol{Z}\cap\overline{\dom}\boldsymbol{f}$.
We first observe that, as in \eqref{eKKnhf74g15a}, 
\begin{equation}
\label{4eKKnhf74g20T}
(x_n)_{n\in\NN}\in(\intdom f)^{\NN}\quad\text{and}\quad
(y^*_n)_{n\in\NN}\in(\intdom g^*)^{\NN}\quad\text{are bounded}.
\end{equation}
In addition, we deduce from \eqref{eKKnhf74g05},
\eqref{eKKnhf74g04i}, and Proposition~\ref{p:2}\ref{p:2i-} that 
$\overline{\boldsymbol{x}}=(\overline{x},\overline{y}^*)\in
\boldsymbol{C}\subset\bigcap_{n\in\NN}H^{\boldsymbol{f}}
(\boldsymbol{x}_0,\boldsymbol{x}_n)$, and hence from 
\eqref{eKKnhf74g19a} that
\begin{equation}
\label{4eKKnhf74g20u}
(\forall n\in\NN)\quad 
D^f(\overline{x},x_n)+D^{g^*}(\overline{y}^*,y_n^*)
=D^{\boldsymbol{f}}\big(\overline{\boldsymbol{x}},
\boldsymbol{x}_n\big)
\leq D^{\boldsymbol{f}}\big(\overline{\boldsymbol{x}},
\boldsymbol{x}_0\big).
\end{equation}
By virtue of Proposition~\ref{p:2}\ref{p:2iv} and 
\eqref{4eKKnhf74g20T}, it suffices to show that every cluster 
point of $(x_n,y_n^*)_{n\in\NN}$ belongs to $\boldsymbol{Z}$. 
To this end, take $x\in\XX$, $y^*\in\YY$, and 
a strictly increasing sequence $(k_n)_{n\in\NN}$ in $\NN$
such that $x_{k_n}\to x$ and $y_{k_n}^*\to y^*$. 
Then $Lx_{k_n}\to Lx$, $x\in\overline{\dom}f$, and 
$y^*\in\overline{\dom}g^*$. 
Since $\overline{x}\in\intdom f$ and since
\eqref{4eKKnhf74g20u} implies that 
$(D^f(\overline{x},x_{k_n}))_{n\in\NN}$ is bounded,
it follows from Lemma~\ref{lkIuh727}\ref{lkIuh727i} that
$x\in\intdom f$. Analogously, $y^*\in\intdom g^*$. 
In turn, Lemma~\ref{lkIuh727}\ref{lkIuh727i--}
asserts that 
\begin{equation}
\label{4ekIuh725c}
\nabla f\big(x_{k_n}\big)\to\nabla f(x)
\quad\text{and}\quad 
\nabla g^*\big(y_{k_n}^*\big)\to\nabla g^*(y^*).
\end{equation}
Furthermore, since $\inte\dom f\subset\inte\dom h$ 
and $L(\inte\dom f)\subset\inte\dom j$, we obtain
$x\in\intdom h$ and $Lx\in\intdom j$. Thus, there exists 
$\rho\in\RPP$ such that $B(x;\rho)\subset\intdom h$ and
$B(Lx;\rho)\subset\intdom j$.
We therefore assume without loss of generality that 
\begin{equation}
\label{4ekIuh716a}
(x_{k_n})_{n\in\NN}\in B(x;\rho)^{\NN}
\quad\text{and}\quad 
(Lx_{k_n})_{n\in\NN}\in B(Lx;\rho)^{\NN}.
\end{equation}
In view of Lemma~\ref{lkIuh727}\ref{lkIuh727i--},
$h({B(x;\rho)})$ and $\nabla h({B(x;\rho)})$ are therefore compact,
which implies that $(h(x_{k_n}))_{n\in\NN}$ and 
$(\nabla h(x_{k_n}))_{n\in\NN}$ are bounded. 
Hence $(D^h(\overline{x},x_{k_n}))_{n\in\NN}$ is bounded and, 
moreover, it follows from \eqref{IVe:Bfg+uh4f9-09h}, 
\eqref{4eKKnhf74g20T}, Lemma~\ref{IVlKKnhf74g12}, 
and \eqref{4:e12} that $(a_{k_n})_{n\in\NN}$ 
is a bounded sequence in $\intdom h$. 
We show likewise that $(D^j(L\overline{x},Lx_{k_n}))_{n\in\NN}$ 
and $(b_{k_n})_{n\in\NN}$ are bounded. 
Next, since the convexity of $h$ yields 
\begin{align}
(\forall n\in\NN)\;\;
D^h(\overline{x},a_{k_n})
&=h(\overline{x})-h(a_{k_n})-\pair{\overline{x}-a_{k_n}}
{\nabla h(a_{k_n})}\nonumber\\
&=h(\overline{x})-h(x_{k_n})-\pair{\overline{x}-x_{k_n}}
{\nabla h(x_{k_n})}+\Pair{\overline{x}-a_{k_n}}
{\nabla h(x_{k_n})-\nabla h(a_{k_n})}\nonumber\\
&\quad\;-\big(h(a_{k_n})-h(x_{k_n})-\pair{a_{k_n}-x_{k_n}}
{\nabla h(x_{k_n})}\big)\nonumber\\
&\leq D^h(\overline{x},x_{k_n})+\Pair{\overline{x}-a_{k_n}}
{\nabla h(x_{k_n})-\nabla h(a_{k_n})},
\end{align}
we derive from \eqref{IVe:Bfg+uh4f9-09h} that
\begin{align}
\label{4ekIuh722a}
(\forall n\in\NN)\;\;
\sigma^{-1}D^h(\overline{x},a_{k_n})
&\leq\gamma_{k_n}^{-1}D^h(\overline{x},a_{k_n})\nonumber\\
&\leq\varepsilon^{-1}D^h(\overline{x},x_{k_n})+
\Pair{\overline{x}-a_{k_n}}{\gamma_{k_n}^{-1}\big(
\nabla h(x_{k_n})-\nabla h(a_{k_n})\big)}\nonumber\\
&=\varepsilon^{-1}D^h(\overline{x},x_{k_n})+
\pair{\overline{x}-a_{k_n}}{a^*_{k_n}+L^*y^*_{k_n}}\nonumber\\
&=\varepsilon^{-1}D^h(\overline{x},x_{k_n})+
\pair{\overline{x}-a_{k_n}}{a^*_{k_n}+L^*\overline{y}^*}
\nonumber\\
&\quad\;+\pair{L\overline{x}-La_{k_n}}{y^*_{k_n}-\overline{y}^*}.
\end{align}
Similarly,
\begin{align}
\label{4ekIuh722b}
(\forall n\in\NN)\quad 
\sigma^{-1}D^j(L\overline{x},b_{k_n})
&\leq\delta^{-1}D^j(L\overline{x},Lx_{k_n})
+\pair{L\overline{x}-b_{k_n}}{b_{k_n}^*-y_{k_n}^*}\nonumber\\
&\leq\delta^{-1}D^j(L\overline{x},Lx_{k_n})
+\pair{L\overline{x}-b_{k_n}}{b_{k_n}^*-\overline{y}^*}
\nonumber\\
&\quad\;+\pair{L\overline{x}-b_{k_n}}{\overline{y}^*-y_{k_n}^*}.
\end{align}
Since \eqref{eKKnhf74g03} entails that
\begin{equation}
\label{ekIuh725a}
\overline{\boldsymbol{x}}\in\boldsymbol{C}\subset
\bigcap_{n\in\NN}\boldsymbol{H}_{k_n}=
\bigcap_{n\in\NN}\menge{(x,y^*)\in\XXX}
{\pair{x-a_{k_n}}{a_{k_n}^*+L^*y^*}+
\pair{Lx-b_{k_n}}{b_{k_n}^*-y^*}\leq 0},
\end{equation}
we deduce from \eqref{4ekIuh722a} and 
\eqref{4ekIuh722b} that
\begin{align}
&\hskip -6mm (\forall n\in\NN)\quad
\sigma^{-1}\big(D^h(\overline{x},a_{k_n})
+D^j(L\overline{x},b_{k_n})\big)\nonumber\\
&\leq\varepsilon^{-1}D^h(\overline{x},x_{k_n})
+\delta^{-1}D^j(L\overline{x},Lx_{k_n})
+\pair{L\overline{x}-La_{k_n}}{y^*_{k_n}-\overline{y}^*}
+\pair{L\overline{x}-b_{k_n}}{\overline{y}^*-y_{k_n}^*}
\nonumber\\
&=\varepsilon^{-1}D^h(\overline{x},x_{k_n})
+\delta^{-1}D^j(L\overline{x},Lx_{k_n})
+\pair{b_{k_n}-La_{k_n}}{y^*_{k_n}-\overline{y}^*}
\nonumber\\
&\leq\varepsilon^{-1}D^h(\overline{x},x_{k_n})
+\delta^{-1}D^j(L\overline{x},Lx_{k_n})
+(\|b_{k_n}\|+\|L\|\,\|a_{k_n}\|)(\|y^*_{k_n}\|+\|\overline{y}^*\|).
\end{align}
Hence, the boundedness of $(a_{k_n})_{n\in\NN}$, 
$(b_{k_n})_{n\in\NN}$, $(y^*_{k_n})_{n\in\NN}$, 
$(D^h(\overline{x},x_{k_n}))_{n\in\NN}$, 
and $(D^j(L\overline{x},Lx_{k_n}))_{n\in\NN}$ implies that of
$(D^h(\overline{x},a_{k_n}))_{n\in\NN}$ and 
$(D^j(L\overline{x},b_{k_n}))_{n\in\NN}$.
In turn, by Lemma~\ref{lkIuh727}\ref{lkIuh727iv},
\begin{equation}
\label{4ekIuh722c}
\Big(D^{h^*}\big(\nabla h(a_{k_n}),
\nabla h(\overline{x})\big)\Big)_{n\in\NN}
\quad\text{and}\quad
\Big(D^{j^*}\big(\nabla j(b_{k_n}),
\nabla j(L\overline{x})\big)\Big)_{n\in\NN}
\quad\text{are bounded}
\end{equation}
and, since Lemma~\ref{lkIuh727}\ref{lkIuh727iii} asserts 
that $D^{h^*}(\cdot,\nabla h(\overline{x}))$ 
and $D^{j^*}(\cdot,\nabla j(L\overline{x}))$ are coercive, 
it follows from \eqref{4ekIuh722c} that 
$(\nabla h(a_{k_n}))_{n\in\NN}$ 
and $(\nabla j(b_{k_n}))_{n\in\NN}$ are bounded. 
Thus, since $(y^*_{k_n})_{n\in\NN}$, $(\nabla h(x_{k_n}))_{n\in\NN}$ 
and $(\nabla j(Lx_{k_n}))_{n\in\NN}$ are bounded, 
we infer from \eqref{IVe:Bfg+uh4f9-09h} that 
\begin{equation}
\label{4ekIuh726a}
(a_{k_n}^*)_{n\in\NN}\quad\text{and}\quad (b_{k_n}^*)_{n\in\NN}
\quad\text{are bounded.}
\end{equation}
On the other hand, since \eqref{eKKnhf74g03} yields
$\overline{\boldsymbol{x}}\in\boldsymbol{C}
\subset\bigcap_{n\in\NN}H^{\boldsymbol{f}}(\boldsymbol{x}_{k_n},
\boldsymbol{x}_{k_n+1/2})$, \eqref{eKKnhf74g19a} and
\eqref{4eKKnhf74g20u} imply that 
\begin{equation}
\label{4ekIuh726d}
(\forall n\in\NN)\quad D^f(\overline{x},x_{k_n+1/2})
+D^{g^*}(\overline{y}^*,y_{k_n+1/2}^*)
=D^{\boldsymbol{f}}\big(\overline{\boldsymbol{x}},
\boldsymbol{x}_{k_n+1/2}\big)
\leq D^{\boldsymbol{f}}\big(\overline{\boldsymbol{x}},
\boldsymbol{x}_{k_n}\big)\leq 
D^{\boldsymbol{f}}\big(\overline{\boldsymbol{x}},
\boldsymbol{x}_0\big).
\end{equation}
Thus, Lemma~\ref{lkIuh727}\ref{lkIuh727iv} yields
\begin{align}
\label{4ekIuh727d}
& (\forall n\in\NN)\quad 
D^{f^*}\big(\nabla f(x_{k_n+1/2}),
\nabla f(\overline{x})\big)+D^g\big(\nabla g^*(y_{k_n+1/2}^*),
\nabla g^*(\overline{y}^*)\big)\nonumber\\
&\hskip 32mm=D^f(\overline{x},x_{k_n+1/2})
+D^{g^*}(\overline{y}^*,y_{k_n+1/2}^*)\nonumber\\
&\hskip 32mm\leq D^{\boldsymbol{f}}\big(\overline{\boldsymbol{x}},
\boldsymbol{x}_0\big)
\end{align}
and, since $D^{f^*}(\cdot,\nabla f(\overline{x}))$ and
$D^g(\cdot,\nabla g^*(\overline{y}^*))$ are coercive by
Lemma~\ref{lkIuh727}\ref{lkIuh727iii}, it follows that
\begin{equation}
\label{4ekIuh724}
(\nabla f(x_{k_n+1/2}))_{n\in\NN}\quad\text{and}\quad
(\nabla g^*(y_{k_n+1/2}^*))_{n\in\NN}
\quad\text{are bounded.}
\end{equation}
However, as in \eqref{IVe:IV4}, 
$D^f(x_{k_n+1/2},x_{k_n})\to 0$ and 
$D^{g^*}(y_{k_n+1/2}^*,y_{k_n}^*)\to 0$, and it therefore follows 
from Lemma~\ref{lkIuh727}\ref{lkIuh727iv} that
\begin{equation}
\label{4ekIuh725a}
D^{f^*}\big(\nabla f(x_{k_n}),\nabla f(x_{k_n+1/2})\big)\to 0
\quad\text{and}\quad 
D^g\big(\nabla g^*(y_{k_n}^*),\nabla g^*(y_{k_n+1/2}^*)\big)\to 0.
\end{equation} 
In view of Lemma~\ref{lkIuh727}\ref{lkIuh727ii}, we infer 
from \eqref{4ekIuh725c}, \eqref{4ekIuh724}, and 
\eqref{4ekIuh725a} that there exists a strictly increasing 
sequence $(p_{k_n})_{n\in\NN}$ in $\NN$ such that
\begin{equation}
\nabla f\big(x_{p_{k_n}+1/2}\big)\to\nabla f(x)
\quad\text{and}\quad 
\nabla g^*\big(y_{p_{k_n}+1/2}^*\big)\to\nabla g^*(y^*).
\end{equation}
Since, by Lemma~\ref{lkIuh727}\ref{lkIuh727i--}--%
\ref{lkIuh727i-},
$(\nabla f)^{-1}=\nabla f^*$ is continuous on
$\intdom f^*$ and $(\nabla g^*)^{-1}=\nabla g$ is continuous on
$\intdom g$, we obtain $x_{p_{k_n}+1/2}\to x$ and 
$y_{p_{k_n}+1/2}^*\to y^*$. Thus, 
\begin{equation}
\label{4ekIuh725b}
x_{p_{k_n}+1/2}-x_{p_{k_n}}\to 0
\quad\text{and}\quad 
y_{p_{k_n}+1/2}^*-y_{p_{k_n}}^*\to 0.
\end{equation}
On the other hand, as in \eqref{IVe:8re84},
\begin{multline}
(\forall n\in\NN)\quad
\|x_{p_{k_n}}-x_{p_{k_n}+1/2}\|\,
\|a_{p_{k_n}}^*+L^*b_{p_{k_n}}^*\|
+\|b_{p_{k_n}}-La_{p_{k_n}}\|\,
\|y_{p_{k_n}}^*-y_{p_{k_n}+1/2}^*\|\\
\geq\sigma^{-1}\big(D^h(x_{p_{k_n}},a_{p_{k_n}})
+D^j(Lx_{p_{k_n}},b_{p_{k_n}})\big),
\end{multline}
and hence, since $(a_{p_{k_n}})_{n\in\NN}$ and 
$(b_{p_{k_n}})_{n\in\NN}$ are bounded, 
we deduce from \eqref{4ekIuh726a} and 
\eqref{4ekIuh725b} that
\begin{equation}
\label{4eKKnhf74g2022a}
\begin{cases}
D^h\big(x_{p_{k_n}},a_{p_{k_n}}\big)\to 0\\
D^j\big(Lx_{p_{k_n}},b_{p_{k_n}}\big)\to 0\\
x_{p_{k_n}}\to x\\
Lx_{p_{k_n}}\to Lx\\
(a_{p_{k_n}})_{n\in\NN}\;\text{has a cluster point}\\
(b_{p_{k_n}})_{n\in\NN}\;\text{has a cluster point}.
\end{cases}
\end{equation}
Consequently, by dropping to a subsequence if necessary and invoking 
Lemma~\ref{lkIuh727}\ref{lkIuh727ii}, we get
\begin{equation}
a_{{p_{k_n}}}\to x\quad\text{and}\quad b_{{p_{k_n}}}\to Lx.
\end{equation}
Hence, using the fact that $\nabla h(x_{{p_{k_n}}})\to\nabla h(x)$ 
and $\nabla j(Lx_{{p_{k_n}}})\to\nabla j(Lx)$, we derive that
$\nabla h(x_{{p_{k_n}}})-\nabla h(a_{{p_{k_n}}})\to 0$ 
and $\nabla j(Lx_{{p_{k_n}}})-\nabla j(b_{{p_{k_n}}})\to 0$, 
which, in view of \eqref{IVe:Bfg+uh4f9-09h}, yields
\begin{equation}
\label{4ekIuh711f}
a^*_{{p_{k_n}}}+L^*y^*_{{p_{k_n}}}\to 0
\quad\text{and}\quad
b^*_{{p_{k_n}}}-y_{{p_{k_n}}}^*\to 0.
\end{equation}
Thus, since $y_{{p_{k_n}}}^*\to y^*$, it follows that 
$a_{{p_{k_n}}}^*\to -L^*y^*$ and $b_{{p_{k_n}}}^*\to y^*$. 
In summary,
\begin{equation}
\gra A\ni(a_{{p_{k_n}}},a_{{p_{k_n}}}^*)\to (x,-L^*y^*)
\quad\text{and}\quad
\gra B\ni(b_{{p_{k_n}}},b_{{p_{k_n}}}^*)\to (Lx,y^*).
\end{equation}
Since $\gra A$ and $\gra B$ are closed 
\cite[Proposition~20.33(iii)]{Livre1}, we conclude that
$(x,-L^*y^*)\in\gra A$ and $(Lx,y^*)\in\gra B$, and therefore
that $(x,y^*)\in\boldsymbol{Z}$.
\end{proof}

Let us note that, even in Euclidean spaces, it may be easier to 
evaluate $(\nabla h+\gamma\partial\varphi)^{-1}$ than the usual 
proximity operator
$\prox_{\gamma\varphi}=(\Id+\gamma\partial\varphi)^{-1}$ 
introduced by Moreau \cite{Mor62b}, which is
based on $h=\|\cdot\|^2/2$. We provide illustrations of such 
instances in the standard Euclidean space $\RR^m$.

\begin{example}
\label{IVex:1}
Let $\gamma\in\RPP$, let $\phi\in\Gamma_0(\RR)$ be such that 
$\dom\phi\cap\RPP\neq\emp$, and let $\vartheta$ be the 
Boltzmann-Shannon entropy function, i.e.,
\begin{equation}
\vartheta\colon\xi\mapsto
\begin{cases}
\xi\ln\xi-\xi,&\text{if}\;\;\xi\in\RPP;\\
0,&\text{if}\;\;\xi=0;\\
\pinf,&\text{otherwise}.
\end{cases}
\end{equation}
Set $\varphi\colon(\xi_i)_{1\leq i\leq m}\mapsto
\sum_{i=1}^m\phi(\xi_i)$ and $h\colon(\xi_i)_{1\leq i\leq m}
\mapsto\sum_{i=1}^m\vartheta(\xi_i)$. Note that $h$ is a 
supercoercive Legendre function \cite[Sections~5 and 6]{BB97} 
and hence Proposition~\ref{IVpp:20a}\ref{IVcl:20b4} asserts 
that $\nabla h+\gamma\partial\varphi$ is coercive and 
$\dom(\nabla h+\gamma\partial\varphi)^{-1}=\RR^m$. 
Now let $(\xi_i)_{1\leq i\leq m}\in\RR^m$, set 
$(\eta_i)_{1\leq i\leq m}
=(\nabla h+\gamma\partial\varphi)^{-1}(\xi_i)_{1\leq i\leq m}$, 
let $W$ be the Lambert function \cite{IVlamb96,Lamb58}, 
i.e., the inverse of $\xi\mapsto\xi e^{\xi}$ on $\RP$, 
and let $i\in\{1,\ldots,m\}$. Then $\eta_i$ can be computed 
as follows.
\begin{enumerate}
\item
\label{IVex:1ii}
Let $\omega\in\RR$ and suppose that 
\begin{equation}
\phi\colon\xi\mapsto
\begin{cases}
\xi\ln\xi-\omega\xi,&\text{if}\;\;\xi\in\RPP;\\
0,&\text{if}\;\;\xi=0;\\
+\infty,&\text{otherwise}.
\end{cases}
\end{equation}
Then $\eta_i=e^{(\xi_i+\gamma(\omega-1))/(\gamma+1)}$.
\item 
\label{IVex:1iii}
Let $p\in\left[1,+\infty\right[$ 
and suppose that either $\phi=|\cdot|^p/p$ or
\begin{equation}
\phi\colon\xi\mapsto
\begin{cases}
\xi^p/p,&\text{if}\;\;\xi\in\RP;\\
+\infty,&\text{otherwise}.
\end{cases}
\end{equation}
Then 
\begin{equation}
\eta_i=
\begin{cases}
\left(\dfrac{W(\gamma(p-1)e^{(p-1)\xi_i})}{\gamma(p-1)}
\right)^{\frac{1}{p-1}},&\text{if}\;\;p>1;\\[4mm]
e^{\xi_i-\gamma},&\text{if}\;\;p=1.
\end{cases}
\end{equation}
\item
\label{IVex:1v}
Let $p\in\left[1,+\infty\right[$ and suppose that
\begin{equation}
\phi\colon\xi\mapsto
\begin{cases}
\xi^{-p}/p,&\text{if}\;\;\xi\in\RPP;\\
+\infty,&\text{otherwise}.
\end{cases}
\end{equation}
Then 
\begin{equation}
\eta_i=\left(\frac{W(\gamma(p+1)e^{-(p+1)\xi_i})}{\gamma(p+1)}
\right)^{\frac{-1}{p+1}}.
\end{equation}
\item
\label{IVex:1vi}
Let $p\in\left]0,1\right[$ and suppose that
\begin{equation}
\phi\colon\xi\mapsto
\begin{cases}
-\xi^p/p,&\text{if}\;\;\xi\in\RP;\\
+\infty,&\text{otherwise}.
\end{cases}
\end{equation} 
Then 
\begin{equation}
\eta_i=\Bigg(\frac{W(\gamma(1-p)e^{(p-1)\xi_i})}{\gamma(1-p)}
\Bigg)^{\frac{1}{p-1}}.
\end{equation}
\end{enumerate}
\end{example}

\begin{example}
\label{IVex:2}
Let $\phi\in\Gamma_0(\RR)$ be such that 
$\dom\phi\cap\left]0,1\right[\neq\emp$ 
and let $\vartheta$ be the Fermi-Dirac entropy, i.e.,
\begin{equation}
\vartheta\colon\xi\mapsto
\begin{cases}
\xi\ln\xi+(1-\xi)\ln(1-\xi),&\text{if}\;\;\xi\in\left]0,1\right[;\\
0&\text{if}\;\;\xi\in\{0,1\};\\
\pinf,&\text{otherwise}.
\end{cases}
\end{equation}
Set $\varphi\colon(\xi_i)_{1\leq i\leq m}
\mapsto\sum_{i=1}^m\phi(\xi_i)$ 
and $h\colon(\xi_i)_{1\leq i\leq m}
\mapsto\sum_{i=1}^m\vartheta(\xi_i)$. 
Note that $h$ is a Legendre function \cite[Sections~5 and 6]{BB97} 
and that $\inte\dom h=\left]0,1\right[^m$ is bounded. Therefore,  
Proposition~\ref{IVpp:20a}\ref{IVcl:20b1} asserts that 
$\nabla h+\partial\varphi$ is coercive and that
$\dom(\nabla h+\partial\varphi)^{-1}=\RR^m$. 
Now let $(\xi_i)_{1\leq i\leq m}\in\RR^m$, 
set $(\eta_i)_{1\leq i\leq m}
=(\nabla h+\partial\varphi)^{-1}(\xi_i)_{1\leq i\leq m}$, 
and let $i\in\{1,\ldots,m\}$. Then $\eta_i$ can be computed 
as follows.
\begin{enumerate}
\item
\label{IVex:2i}
Let $\omega\in\RR$ and suppose that
\begin{equation}
\phi\colon\xi\mapsto
\begin{cases}
\xi\ln\xi-\omega\xi,&\text{if}\;\;\xi\in\RPP;\\
0,&\text{if}\;\;\xi=0;\\
\pinf,&\text{otherwise}.
\end{cases}
\end{equation}
Then $\eta_i=-e^{\xi_i+\omega-1}/2+\sqrt{e^{2(\xi_i+\omega-1)}/4
+e^{\xi_i+\omega-1}}$.
\item
\label{IVex:2ii}
Suppose that
\begin{equation}
\phi\colon\xi\mapsto
\begin{cases}
(1-\xi)\ln(1-\xi)+\xi,&\text{if}\;\;\xi\in\left]-\infty,1\right[;\\
1,&\text{if}\;\;\xi=1;\\
\pinf,&\text{otherwise}.
\end{cases}
\end{equation}
Then $\eta_i=1+e^{-\xi_i}/2-\sqrt{e^{-\xi_i}+e^{-2\xi_i}/4}$.
\end{enumerate}
\end{example}

\end{document}